\documentclass[11pt]{article}
\usepackage{mathrsfs}
\usepackage{amsmath}
\usepackage{amssymb}
\usepackage[amsmath,thmmarks,hyperref]{ntheorem}
\usepackage[all]{xy}
\usepackage{stmaryrd}
\usepackage{float}
\usepackage[bookmarksnumbered]{hyperref}
\usepackage{enumitem}%, enumerate}
\usepackage{eucal}
%\usepackage{ulem}
%%% Dave macros
\usepackage{comparison}
\usepackage{todonotes}
\usepackage{rotating}
\usepackage{appendix}
\usepackage{parskip}
\usepackage[textwidth=15.5cm, textheight=22cm]{geometry}
\usepackage{pdflscape}
\usepackage{mathabx}
\usepackage{float}
\mathsurround=1pt

%%%%%%%%%%%%%%%%%%%%
%%%%% STANDARDS
%%%%%%%%%%%%%%%%%%%%

\newcommand{\R}{\mathbb{R}}

%\newcommand{\1}{\mathbb{1}}

%%%%%%%%%%%%%%%%%%%%
%%%% ARROWS: 
%%%%%%%%%%%%%%%%%%%%

\newcommand{\ra}{\rightarrow}
\newcommand{\lra}{\longrightarrow}

\newcommand{\rcong}{ \begin{sideways}$\cong$\end{sideways}}

%%%%%%%%%%%%%%%%%%%%
%%%% CATEGORY-RELATED THINGS
%%%%%%%%%%%%%%%%%%%%

%\newcommand{\Top}{\text{Top}}

%%%%%%%%%%%%%%%%%%%%
%%%% OPERATIONS 
%%%%%%%%%%%%%%%%%%%%

\newcommand{\join}{\Asterisk}%{\hexstar}%{\ast}
\newcommand{\smsh}{\wedge}
\newcommand{\ox}{\otimes}
\newcommand{\x}{\times}

%%%%%%%%%%%%%%%%%%%%
%%%% UNITS/PTS
%%%%%%%%%%%%%%%%%%%%

\newcommand{\pt}%{\ast}%
{\bullet}
%%%%%%%%%%%%%%%%%%%%
%%%% Hof/Hocof
%%%%%%%%%%%%%%%%%%%%

%\DeclareMathOperator{\holim}{holim}
%\DeclareMathOperator{\eholim}{\textbf{holim}}
%\DeclareMathOperator{\colim}{colim}
%\DeclareMathOperator{\hocolim}{hocolim}
%%%%%%%%%%%%%%%%%%%%
%%%% GC or Cosimplicial stuff
%%%%%%%%%%%%%%%%%%%%

\newcommand{\T}{\mathrm{T}}
\renewcommand{\P}{\mathrm{P}}

%\newcommand{\DDelta}{\bigtriangleup^{\pt}}

%{\bigtriangleup}

%%%%%%%%%%%%%%%%%%%%
% MISC
%%%%%%%%%%%%%%%%%%%%

%\newcommand{\C}{\mathscr{C}}

\DeclareMathOperator{\nhomog}{n--homog}
\DeclareMathOperator{\nexs}{n--exs}
\DeclareMathOperator{\npoly}{n--poly}

\DeclareMathOperator{\res}{Res}
\DeclareMathOperator{\ind}{Ind}
\DeclareMathOperator{\Sp}{Sp}

\DeclareMathOperator{\topspace}{Top}
\DeclareMathOperator{\diff}{diff}

\DeclareMathOperator{\skel}{sk}
\DeclareMathOperator{\cref}{cr}

\DeclareMathOperator{\nat}{Nat}
\DeclareMathOperator{\mapdiag}{map--diag}

%rosona macros
%\DeclareMathOperator{\smsh}{\wedge}
%\DeclareMathOperator{\Wedge}{\vee}
%\include{romacro}
%\newtheorem{claim}[thm]{Claim}

\newcommand{\devcat}{\Sigma_n \ltimes (\wcal_n \Top)}
\newcommand{\orthdevcat}{O(n) \ltimes (\jcal_n \Top)}
\newcommand{\interorthdevcat}{\Sigma_n \ltimes (i^* \jcal_n \Top)}

\newcommand{\lca}{\circlearrowleft}

\newcommand{\bigsmashprod}[2]{\underset{#1}{\overset{#2}{\bigwedge}}}
\newcommand{\Top}{{\topspace}}

\newcommand{\wt}{\mathbb{T}}

\title{Comparing the orthogonal and homotopy functor calculi}
\author{David Barnes \and Rosona Eldred}
\date{}

\begin{document}
%\listoftodos
\maketitle
\begin{abstract}
\noindent %For a weak-equivalence-preserving functor,$F$, of spaces, 
Goodwillie's homotopy functor calculus constructs a
Taylor tower of approximations to $F$, often a functor from spaces to spaces.
Weiss's orthogonal calculus provides a Taylor tower for
functors from vector spaces to spaces. In particular, there is a Weiss tower associated to the
functor $V \mapsto F (S^V)$, where $S^V$ is the one-point compactification of $V$.

In this paper, we give a comparison of these two towers and show
that when $F$ is analytic the towers agree up to weak equivalence.
We include two main applications, one of which gives as a corollary the convergence of the Weiss Taylor tower of $BO$. We also lift the homotopy level tower comparison to a commutative diagram
of Quillen functors, relating model categories for Goodwillie calculus and model categories for
the orthogonal calculus.
\end{abstract}

{\small \tableofcontents}

\section{Introduction}

Goodwillie's calculus of homotopy functors, developed originally in  \cite{gw90, gw91, goodcalc3},  is a method of studying
equivalence-preserving functors,  motivated by applications to Waldhausen's algebraic $K$-theory of a space. A family of related theories grew out of this work;  our focus is on the homotopy functor calculus and the orthogonal calculus of Weiss \cite{weiss95}, the latter of which  was developed to study functors from real inner-product spaces to topological spaces, such as $BO(V)$ and $TOP(V)$.

The model categorical foundations for the homotopy functor calculus and the orthogonal calculus may be found in 
Biedermann-Chorny-R\"ondigs \cite{BCR07}, Biedermann-R\"ondigs \cite{BRgoodwillie} and Barnes-Oman\cite{barnesoman13} and, most recently, the prequel to this paper, Barnes-Eldred \cite{barneseldred}. 
In \cite{barneseldred}, we re-work the classification results of Goodwillie so
as to resemble that of the orthogonal calculus. 
In this paper, we use this
similarity to give a formal comparison between the tower arising from orthogonal calculus and that arising from Goodwillie's calculus of homotopy functors. 
Allusions to such a comparison have existed as folk results for some time,
(for example, it is central to \cite{ADL}).
In more general terms, this paper will make it easier to use the two forms of calculus together and transfer calculations between them. For example, the convergence results of Section \ref{sec:orthogweaklytic} follow from transferring statements about the Goodwillie tower of a functor to the corresponding Weiss tower.

%In more general terms, this paper will make it easier to mix results in the two forms of calculus and transfer calculations between them (such as the convergence
%results of Section \ref{sec:orthogweaklytic}, which are essentially seen to be true for the Weiss tower from the Goodwillie tower). 
%

%We provide results at both the level of homotopy categories and model categories.
%In this paper, we use this 
%similarity to give a formal comparison between the orthogonal calculus and Goodwillie's calculus of homotopy functors.  This adds rigour to a number of papers that make an argument in one calculus and then claim that it holds by analogy in the other. %We view this as making rigorous the tendency in various papers to make an argument in one calculus and then say ``by general calculus considerations" to establish the analogous result for the other. 

We start with a background section, Section \ref{sec:background}, with necessary definitions and results from the calculi. Section \ref{sec:equivtowers} contains a functor level comparison of the theories, with  main results as follows. Given a functor, $F$,
from based topological spaces to based topological spaces we can consider 
the functor $V \mapsto F (S^V)$, which we call the restriction of $F$. 
Proposition \ref{prop:restrictionhomog} shows that the 
restriction of an $n$-homogeneous functor 
(in the sense of Goodwillie) gives an $n$-homogeneous
(in the sense Weiss).
Similarly Proposition \ref{prop:nexc-north} shows that 
restriction sends $n$-excisive functors to $n$-polynomial functors.
We can then consider the restriction of the Goodwillie tower of $F$
and the Weiss tower associated to the
functor $V \mapsto F (S^V)$. 
Theorem \ref{thm:tower-eq} shows that when $F$ is analytic, 
these two towers agree.

From these results, we obtain two applications. Firstly, we show in 
Section \ref{sec:orthogweaklytic} that the
Weiss tower of the functor $V \mapsto BO(V)$ converges to $BO(V)$
when $V$ has dimension at least 2 (Corollary \ref{cor:BOV}). This convergence was claimed without proof in Arone \cite[p.13]{aroneweiss}.  This result follows from the more general result we establish (Theorem \ref{thm:weissanalytic}) that if the 1st (unstable) derivative of a functor has a kind of analyticity, so does the functor.
Secondly, we lift the comparisons of the two forms of calculus to 
a commutative diagram of model categories and Quillen 
pairs, see  Figure \ref{fig:big} of Section \ref{sec:diagram}. 
This diagram compares
the model structures of $n$-excisive functors, $n$-polynomial functors
$n$-homogeneous functors (of both kinds) and the 
categories of spectra that classify
homogeneous functors.

\paragraph{Acknowledgements} 
The authors would like to thank  
Michael Weiss for motivating discussions on Section \ref{sec:orthogweaklytic} 
and Greg Arone and Tom Goodwillie for numerous helpful conversations.

\section{Background}\label{sec:background}
In this section we introduce the orthogonal calculus
of Weiss and the homotopy functor calculus of Goodwillie. 
For the sake of the comparison, we use compatible model category
versions of these calculi, namely work of Barnes-Oman \cite{barnesoman13} for orthogonal
calculus and Barnes-Eldred \cite{barneseldred} for the homotopy functor calculus.

We are only interested in the based versions of these
two forms of calculus; all spaces are
based. We let $\Top$ denote the category of
based (compactly generated, weak Hausdorff) topological spaces.

\subsection{Homotopy Calculus}
In either of our two settings, the input $F$ will be a functor from some small category
to based spaces. The output will be a tower of functors
of the same type as $F$.
That is, for each $n \geqslant 0$ there will be a
fibration sequence $D_n F \to \mathbb{P}_n F \to \mathbb{P}_{n-1} F$, which can be arranged as below.
\[
\xymatrix@C+1cm@R-0.5cm{
& \vdots \ar[d] \\
& \mathbb{P}_3 F \ar[d] & D_3 F \ar[l] \\
& \mathbb{P}_2 F \ar[d] & D_2 F \ar[l] \\
& \mathbb{P}_1 F \ar[d] & D_1 F \ar[l] \\
F \ar[r] \ar[ur] \ar[uur] \ar[uuur]
&  \mathbb{P}_0 F. \\
}
\]
The functors $\mathbb{P}_nF$ (which for the hofunctor calculus are called $P_nF$ and for the orthogonal calculus, $\wt_nF$) have a kind of $n$-polynomial property, and for nice functors, the inverse limit of the tower, denoted $\mathbb{P}_\infty F$, is equivalent to $F$. We think of each $\mathbb{P}_n F$ as an `$n^{th}$-approximation' to $F$, moreover $\mathbb{P}_nF$ 
can be recognised 
as a fibrant replacement of $F$ in some suitable model category. The layers of the tower, $D_nF$, are then analogous to purely-$n$-polynomial functors -- called $n$-homogeneous.

In each case there will be a classification theorem
giving the homogeneous functors in terms of some form of spectra.
Furthermore the spectrum corresponding to $D_n F$ 
can be calculated without first constructing $\mathbb{P}_n F$ and $\mathbb{P}_{n-1} F$.

For each theory, we will give a definition of the relevant notion of $n$-polynomialness,
a construction of the approximations $\mathbb{P}_n F$
and a direct construction of spectrum 
corresponding to the functors $D_n F$
and a classification theorem.

\subsection{The categories of functors}
We introduce the necessary basic categories of functors studied in the homotopy functor calculus and the orthogonal calculus. 

For $\gcal$ a topological group we let
$\gcal \lca \Top$ denote the category of based topological spaces with $\gcal$-action (that fixes the basepoint) and equivariant maps. 

\begin{definition}
  Let $\wcal$ be the category of finite based CW-complexes, which is enriched
  over the category of based topological spaces.
  Let $\ical$ be the category of finite dimensional real inner product
  spaces with morphisms the isometries. Let $\jcal_0$ be the category with the same objects
  as $\ical$, but with morphism spaces given by $\jcal_0(U,V)= \ical(U,V)_+$.
  The category $\jcal_0$ is also enriched over $\Top$.
\end{definition}

We note that $\jcal_0(V,V)=O(V)_+$, and that $\jcal_0(U,V)= \ast$ whenever the dimension
of $U$ is larger than the dimension of $V$.

\begin{definition}
  Let $\wcal \Top$ and $\jcal_0 \Top$ denote the categories of enriched functors from
  $\wcal$ and $\jcal_0$ to $\Top$.
\end{definition}

Since both $\jcal_0$ and $\wcal$ are skeletally small, $\wcal \Top$ and $\jcal_0 \Top$ have all small
limits and colimits, constructed objectwise.

\begin{lemma}
All functors $F$ in $\wcal \Top$ are reduced homotopy functors. That is, $F$ preserves weak equivalences and $F(\ast)$ is equal to $\ast$, the one point space. 
\end{lemma}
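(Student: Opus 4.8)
The statement has two parts: that $F$ is \emph{reduced}, i.e.\ $F(\ast)=\ast$, and that $F$ is a \emph{homotopy functor}, i.e.\ it preserves weak equivalences. Both are formal consequences of the fact that $F$ is enriched over based spaces; the second additionally invokes Whitehead's theorem.

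\emph{Reducedness.} The plan is to exploit that $\ast$ is a zero object of $\wcal$. The morphism space $\wcal(\ast,\ast)$ is a one-point space, so its unique point $\mathrm{id}_\ast$ is also its basepoint. Since $F$ is a based enriched functor, its structure map $\wcal(\ast,\ast)\to\Map_*(F(\ast),F(\ast))$ is a based map and therefore sends $\mathrm{id}_\ast$ to the basepoint of $\Map_*(F(\ast),F(\ast))$, namely the constant map at the basepoint of $F(\ast)$. But that image is $\mathrm{id}_{F(\ast)}$ by functoriality. Hence $\mathrm{id}_{F(\ast)}$ is constant, which forces the underlying set of $F(\ast)$ to consist of a single point; as the one-point topology is unique, $F(\ast)=\ast$ on the nose.

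\emph{Homotopy invariance.} The key point is that any $\Top$-enriched functor preserves based homotopies. A based homotopy $H\colon X\smsh I_+\to Y$ from $f$ to $g$ (writing $I=[0,1]$) transposes, under the adjunction $\Map_*(X\smsh I_+,Y)\cong\Map_*(I_+,\Map_*(X,Y))$, to a path $I\to\wcal(X,Y)$ from $f$ to $g$; composing this path with the structure map $\wcal(X,Y)\to\Map_*(F(X),F(Y))$ and transposing back yields a based homotopy $F(f)\simeq F(g)$. Consequently $F$ carries based homotopy equivalences to based homotopy equivalences. Now let $\varphi\colon X\to Y$ be a weak equivalence in $\wcal$. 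Since $X$ and $Y$ are CW complexes, Whitehead's theorem makes $\varphi$ a homotopy equivalence, and since finite based CW complexes are well-pointed, $\varphi$ is in fact a based homotopy equivalence. Therefore $F(\varphi)$ is a based homotopy equivalence, in particular a weak equivalence.

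Apart from these inputs the argument is purely formal, so the step I would treat as the main obstacle is the upgrade from ``weak equivalence'' to ``based homotopy equivalence'' for a morphism of $\wcal$: this is exactly where Whitehead's theorem and the well-pointedness of finite based CW complexes are used, and where one must be slightly careful about the point-set conventions underlying the category $\wcal$.
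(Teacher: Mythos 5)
Your proof is correct and follows essentially the same route as the paper's: the basedness of the enrichment forces $\mathrm{id}_{F(\ast)}$ to be the constant map (the paper phrases this as $F(0_\ast)=0_{F(\ast)}$), and homotopy invariance comes from the fact that an enriched functor preserves based homotopies, combined with Whitehead's theorem for the finite CW complexes of $\wcal$. The only cosmetic difference is that the paper packages the homotopy-preservation step via the assembly map $F(A)\smsh B\to F(A\smsh B)$ with $B=[0,1]_+$ (a map it wants to name anyway), whereas you transpose the homotopy directly through the enrichment structure map; these are the same computation.
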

\begin{proof}
Given any $A$ and $B$ in $\wcal$, the composite
$A \smashprod B \cong A \smashprod \wcal(S^0,B)  \to \wcal(A, A \smashprod B)$
%\]

induces a map $F(A) \smashprod B \to F(A \smashprod B)$. 
If $f,g \co A \to A'$ are maps in $\wcal$ which are homotopic, 
then we can construct a homotopy between $F(f)$ and $F(g)$ 
using the above map in the case $B =[0,1]_+$. 

This lets us conclude that $F(\ast) \simeq \ast$.  
We proceed to show equality: let $0_X \co X \to X$ denote the
constant map which sends $X$ to the basepoint of $X$. 
The functor $F$ is enriched over based spaces, so $F(0_X) = 0_{F(X)}$. 
Set $X=\ast$, then $\id_{F(\ast)}= F(\id_\ast)$, which is equal to 
$F(0_\ast) = 0_{F(\ast)}$.  Hence $F(\ast)$ is $\ast$. 
\end{proof}

Since every weak homotopy equivalence in $\wcal$ is a homotopy equivalence, 
we see that every object of $\wcal \Top$ preserves weak equivalences. 

\begin{definition}
For $A$ and $B$ in $\wcal$, the map
$F(A) \smashprod B \to F(A \smashprod B)$
constructed in the previous proof 
is called the \textbf{assembly map}.
\end{definition}

% If $F \in \wcal \Top$, then $F(\ast)= \ast$ as its identity map is equal to the
% trivial map. 

% Given any $A$ and $B$ in $\wcal$, there is a map
% \[F(A) \smashprod B \to F(A \smashprod B)\]
% called the \textbf{assembly map}, which is induced by the following composite.
% \[
% A \smashprod B \cong A \smashprod \wcal(S^0,B)  \to \wcal(A, A \smashprod B)
% \]
% By setting $B =[0,1]_+$, then we see that any such $F$ preserves homotopies
% and hence preserves weak equivalences. In summary: every object of $\wcal \Top$ is a 0-reduced
% homotopy functor.

\subsection{The approximations}
We specify the $n$-polynomial properties that occur in the functor calculus and the orthogonal calculus.

  Let $\pcal (\underline{n})$ denote the poset of subsets of
  $\{1,\ldots, n\}$;  $\pcal_0(\underline{n})$ denotes the \textit{non-empty} subsets and $\pcal^1 (\underline{n})$ is all but the final set.
  
\begin{definition}
  A functor $F \in \wcal \Top$ is said to be \textbf{$n$-excisive} if
  for any $X$ the map (induced by the inclusion of $\emptyset$ in the poset of subsets of $\underline{n}$)
  \[F(X) \lra \holim_{S \in \mathscr{P}_0(\underline{n+1})} F(S \join X) \]
  is a weak homotopy equivalence for each $X$ in $\wcal$ ($\join$ denotes the topological join).
\end{definition}

An $n$-cube $\xcal$ in a category $\ccal$ is a functor $\xcal: \pcal (\underline{n})\lra \ccal$.
\begin{definition}
We say an $n$-cube $\xcal$ is \textbf{cartesian} if $\xcal (\emptyset) \overset{\simeq}{\lra} \holim_{\pcal_0 (\underline{n})} \xcal$. Dually, $\xcal$ is \textbf{cocartesian} if $\hocolim_{\pcal^1 (\underline{n})} \overset{\simeq}{\lra}\xcal (\underline{n})$. An $n$-cube is \textbf{strongly cocartesian} if every sub-2-cube is cocartesian. 2-cartesian diagrams are homotopy pullback squares, 2-cocartesian are homotopy pullback squares. 
\end{definition}

The above definition of $n$-excisive is equivalent to the more standard statement
about a functor taking strongly cocartesian $(n+1)$-cubes to
cartesian $(n+1)$-cubes. A functor is 1-excisive if and
only if it takes homotopy pushouts to homotopy pullbacks.

\begin{definition}
  For $E \in \jcal_0 \Top$ define
\[
\tau_n E(V) = \underset{0 \neq U \subset \rr^{n+1}}{\holim}
E(U \oplus V)
\]
We say that $E$ is \textbf{$n$-polynomial} if
\[
\rho_E^n (V) \co  E(V) \lra \tau_n E(V)
\]
is a weak homotopy equivalence for each inner product space $V$.
\end{definition}

\begin{definition}
  We define $\wt_n E = \hocolim \tau_n^k E$ for $E \in \jcal_0 \Top$.
  Similarly, we define $P_n F = \hocolim \T_n^k F$ for $F \in \wcal \Top$.
  We call $\wt_n$ the \textbf{$n$-polynomial approximation functor}
  and we call $P_n$ the \textbf{$n$-excisive approximation functor}.
\end{definition}

The justification for calling these functors approximations comes from the following result, 
see \cite[Theorem 1.8]{goodcalc3} and \cite[Theorem 6.3]{weiss95}.

\begin{proposition}
If $F'$ is $n$-excisive, then any map $F \to F'$ factors (up to homotopy)
over the natural map $F \to P_n F$.
If $E'$ is $n$-polynomial, then any map $E \to E'$ factors (up to homotopy)
over the natural map $E \to \wt_n E$.
\end{proposition}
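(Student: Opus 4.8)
The plan is to deduce the factorization from the single fact that the natural map $F' \to P_n F'$ is a weak equivalence whenever $F'$ is $n$-excisive (and, symmetrically, that $E' \to \wt_n E'$ is a weak equivalence whenever $E'$ is $n$-polynomial). The two halves of the statement are formally identical: the orthogonal assertion is obtained from the homotopy-functor one by substituting $\tau_n$, $\wt_n$ and ``$n$-polynomial'' for $\T_n$, $P_n$ and ``$n$-excisive'' throughout. So I would carry out the argument for homotopy functors and then remark that the orthogonal case follows verbatim.

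First I would record that $\T_n$ preserves objectwise weak equivalences: this is immediate from $\T_n F(X) = \holim_{S \in \mathscr{P}_0(\underline{n+1})} F(S \join X)$, since a homotopy limit over a fixed small indexing category sends an objectwise equivalence of diagrams to an equivalence. Now let $F'$ be $n$-excisive. By definition the natural map $F' \to \T_n F'$ is a weak equivalence, and applying the equivalence-preserving functor $\T_n^k$ shows that every map in the tower
\[
F' \lra \T_n F' \lra \T_n^2 F' \lra \cdots
\]
is a weak equivalence. The canonical map from the initial stage of such a tower into its homotopy colimit is therefore a weak equivalence; that is, $F' \to P_n F' = \hocolim_k \T_n^k F'$ is a weak equivalence.

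The factorization is now formal. Write $\eta_F \co F \to P_n F$ for the natural map. Given $g \co F \to F'$, naturality of $P_n$ gives a commuting square
\[
\xymatrix@C=1.5cm{
F \ar[r]^{g} \ar[d]_{\eta_F} & F' \ar[d]^{\eta_{F'}} \\
P_n F \ar[r]^{P_n g} & P_n F'
}
\]
whose right-hand vertical map is the weak equivalence just produced. Choosing a homotopy inverse $r \co P_n F' \to F'$ for $\eta_{F'}$, the composite $r \circ (P_n g) \circ \eta_F$ equals $r \circ \eta_{F'} \circ g$ by commutativity, which is homotopic to $g$ since $r \circ \eta_{F'} \simeq \mathrm{id}_{F'}$. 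Thus $g$ factors up to homotopy through $\eta_F \co F \to P_n F$, as claimed; one may also observe, using that $P_n F$ is itself $n$-excisive (cf.\ \cite[Theorem 1.8]{goodcalc3}), that this exhibits $\eta_F$ as the universal $n$-excisive factorization, though only existence is asserted here.

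The step I expect to require the most care is the passage ``up to homotopy'': inverting $\eta_{F'}$ and manipulating the ensuing homotopies must take place among enriched functors and \emph{natural} transformations. The clean way to justify this is to work inside the model structure on $\wcal \Top$ (respectively $\jcal_0 \Top$) of \cite{barneseldred} (respectively \cite{barnesoman13}), in which $P_n$ (respectively $\wt_n$) models the fibrant replacement functor: an $n$-excisive $F'$ is then fibrant, $\eta_{F'}$ is a weak equivalence between fibrant objects, and after passing to a cofibrant replacement it acquires a genuine homotopy inverse, legitimising the homotopy-category manipulations above. A second, purely point-set, point worth a line is that $\T_n$ and $\tau_n$ preserve objectwise weak equivalences with no cofibrancy hypothesis on the input; this holds because $\Top$ is well enough behaved that the defining homotopy limits are homotopy invariant.
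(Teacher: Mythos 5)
Your argument is correct, and it is essentially the standard one: the paper itself offers no proof of this proposition, only citations to \cite[Theorem 1.8]{goodcalc3} and \cite[Theorem 6.3]{weiss95}, whose proofs proceed exactly as you do --- show $\eta_{F'}\co F'\to P_nF'$ is an equivalence when $F'$ is $n$-excisive (via homotopy invariance of $\T_n$ and of sequential homotopy colimits along equivalences) and then invert it across the naturality square. Your closing remarks on where the inversion ``up to homotopy'' legitimately takes place (in the homotopy category, or after fibrant/cofibrant replacement in the model structures of \cite{barneseldred} and \cite{barnesoman13}) address the only genuinely delicate point.
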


The following result is \cite[Proposition 3.2]{gw91} and \cite[Proposition 5.4]{weiss95}.

\begin{proposition}\label{prop:n-is-n+1}
  Every $(n-1)$-polynomial functor is $n$-polynomial.
  Every $(n-1)$-excisive functor is $n$-excisive.
\end{proposition}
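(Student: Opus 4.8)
The plan is to prove the two assertions by one method: the diagram that detects $n$-polynomiality (resp.\ $n$-excisiveness) is built on $n+1$ ``coordinates'' and can be reassembled out of two diagrams built on $n$ coordinates, and the $(n-1)$-polynomiality (resp.\ $(n-1)$-excisiveness) hypothesis then forces it to be a weak equivalence. Concretely, the index category of the detecting diagram splits according to whether the last of its coordinates is used, and the part in which it is not used is the detecting diagram for degree $n-1$.

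For the homotopy functor statement I would work with the reformulation recalled above: a functor is $m$-excisive exactly when it carries strongly cocartesian $(m+1)$-cubes to cartesian $(m+1)$-cubes. So suppose $F$ is $(n-1)$-excisive and let $\xcal \colon \pcal(\underline{n+1}) \to \Top$ be a strongly cocartesian $(n+1)$-cube. Separating off the last coordinate, regard $\xcal$ as a morphism of $n$-cubes $\xcal_0 \to \xcal_1$, where $\xcal_0(S) = \xcal(S)$ and $\xcal_1(S) = \xcal(S \cup \{n+1\})$ for $S \subseteq \underline{n}$. Each of $\xcal_0,\xcal_1$ is a face of $\xcal$, and any sub-$2$-cube of a face of $\xcal$ is itself a sub-$2$-cube of $\xcal$; hence $\xcal_0$ and $\xcal_1$ are strongly cocartesian $n$-cubes. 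Since $F$ is $(n-1)$-excisive and preserves weak equivalences (so that total homotopy fibres are homotopy invariant), $F\xcal_0$ and $F\xcal_1$ are cartesian, i.e.\ have contractible total homotopy fibre. Applying $F$ objectwise identifies $F\xcal$ with the morphism of $n$-cubes $F\xcal_0 \to F\xcal_1$, and the total homotopy fibre of an $(n+1)$-cube so regarded is the homotopy fibre of the induced map of total homotopy fibres; this is the homotopy fibre of a map between contractible spaces, hence contractible. Thus $F\xcal$ is cartesian and $F$ is $n$-excisive, recovering \cite[Proposition 3.2]{gw91}.

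For the orthogonal statement I would run the same pattern with the finite cube replaced by the indexing poset of nonzero subspaces $0 \neq U \subseteq \rr^{n+1}$ from the definition of $\tau_n$. Fix a hyperplane $\rr^n \subseteq \rr^{n+1}$. The full subposet of those $U$ with $U \subseteq \rr^n$ is downward closed, and via $U \mapsto U \oplus V$ it carries precisely the diagram whose homotopy limit is $\tau_{n-1}E(V)$, which is $E(V)$ by the $(n-1)$-polynomiality hypothesis. The complementary upward-closed subposet, together with the matching terms between the two pieces, exhibits $\tau_n E(V)$ as a homotopy pullback whose remaining corners are homotopy limits of diagrams $U \mapsto E(U \oplus V)$; rewriting each $E(U \oplus V)$ as $\tau_{n-1}E(U \oplus V)$ --- again by $(n-1)$-polynomiality, now applied at the spaces $U \oplus V$ --- and re-indexing identifies those corners, so the pullback is $E(V)$. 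Hence $\rho^n_E \colon E(V) \to \tau_n E(V)$ is a weak equivalence for every $V$, i.e.\ $E$ is $n$-polynomial.

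I expect this last step, in the orthogonal case, to be the main obstacle. Unlike the finite combinatorial cube, the poset of nonzero subspaces of $\rr^{n+1}$ is genuinely topological --- Grassmannians and flag manifolds enter --- so identifying the matching terms of the homotopy pullback square and checking that the re-indexed homotopy limits really collapse takes real care. The cleanest route is to lift that bookkeeping verbatim from the proof of \cite[Proposition 5.4]{weiss95}, since the $\tau_n$ here is precisely Weiss's construction.
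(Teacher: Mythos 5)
The paper offers no proof of this proposition at all --- it simply cites \cite[Proposition 3.2]{gw91} and \cite[Proposition 5.4]{weiss95} --- so your attempt has to be measured against those sources. Your argument for the excisive half is precisely Goodwillie's: split a strongly cocartesian $(n+1)$-cube $\xcal$ into a map of strongly cocartesian $n$-cubes $\xcal_0 \to \xcal_1$, apply $F$, and use that a map of cartesian $n$-cubes is a cartesian $(n+1)$-cube. That is correct; the one loose point is the step ``total homotopy fibre contractible $\Rightarrow$ cartesian'', which requires fibres over \emph{all} basepoints (the inclusion of a point into a two-point space has contractible fibre over the basepoint but is not an equivalence). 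This is avoided entirely by noting that $\holim_{\pcal_0(\underline{n+1})}F\xcal$ is the homotopy pullback of $\holim_{\pcal_0(\underline{n})}F\xcal_0 \to \holim_{\pcal_0(\underline{n})}F\xcal_1 \leftarrow F\xcal_1(\emptyset)$, so cartesianness of $F\xcal_1$ reduces the claim to cartesianness of $F\xcal_0$. For the orthogonal half, your sketch stops exactly where the real work begins, as you acknowledge: the indexing ``poset'' of nonzero subspaces of $\rr^{n+1}$ is a topological category (its object space is a union of Grassmannians), so the sieve/cosieve homotopy pullback decomposition you invoke is not available off the shelf, and the ``matching terms'' you would need to identify are not discrete homotopy limits. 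Weiss's actual proof of Proposition 5.4 first models $\tau_n E(V)$ as a space of sections over the space of lines in $\rr^{n+1}$ and then exploits the cell decomposition of the relevant projective space; that is the rigorous form of your hyperplane splitting. Deferring this half to \cite{weiss95} is exactly what the paper does, so there is no error in your proposal, but as a self-contained argument the polynomial case remains incomplete.
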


Combining these two results gives canonical (up to homotopy)
maps $P_n F \to P_{n-1} F$ and $\wt_n E \to \wt_{n-1} E$
for any $F \in \wcal \Top$ and any $E \in \jcal_0 \Top$.

\subsection{The homogeneous functors}

We define $n$-homogeneous functors in each setting and give the classification results of Goodwillie and Weiss.

\begin{definition}
  For $E \in \jcal_0 \Top$, we define $D_n^W E$ to be the homotopy fibre of
  $\wt_n E \to \wt_{n-1} E$ (the $W$ stands for Weiss).
  For $F \in \wcal \Top$, we define $D_n^G F$ to be the homotopy fibre of
  $P_n F \to P_{n-1} F$ (the $G$ stands for Goodwillie).
\end{definition}

\begin{definition}
  We say that a functor $F \in \wcal \Top$ is \textbf{$n$-homogeneous} if it is $n$-excisive and 
  $P_{n-1}F$ is objectwise contractible. 
  We say that a functor $E \in \jcal_0 \Top$ is \textbf{$n$-homogeneous} if it is $n$-polynomial and
  $\wt_{n-1}E$ is objectwise contractible. 
\end{definition}
Since $P_{n-1}$ and $\wt_{n-1}$ commute with homotopy fibres, 
the functors $D^G_nF$ and $D^W_n E$ are $n$-homogeneous.

The next result classifies the homogeneous functors (in either setting) in terms of 
spectra with group actions. For details, see \cite[Sections 2-5]{goodcalc3} and \cite[Theorem 7.3]{weiss95}.

\begin{theorem}\label{thm:classifications}
  The full subcategory of $n$-homogenous functors inside $\ho( \wcal \Top )$ is 
  equivalent to the homotopy category of spectra with $\Sigma_n$-action. 
  Given a spectrum $\Theta_F$ with $\Sigma_n$-action the functor below 
  is an $n$-homogeneous functor of $\wcal \Top$.
  \[
  X \mapsto \Omega^\infty \big( (\Theta_F \smashprod X^{\smashprod n})/h \Sigma_n \big)
  \]
  The full subcategory of $n$-homogenous functors inside $\ho( \jcal_0 \Top )$ is
  equivalent to the homotopy category of spectra with $O(n)$-action. 
  Given a spectrum $\Psi_E$ with $O(n)$-action the functor below
  is an $n$-homogeneous functor of $\jcal_0 \Top$.
  \[
  V \mapsto \Omega^\infty \big( (\Psi_E \smashprod S^{\rr^n \otimes V} )/h O(n) \big)
  \]
\end{theorem}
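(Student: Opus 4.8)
The two statements are proved by a single strategy, which I outline in parallel; the $\Sigma_n$-case is the archetype and the $O(n)$-case its orthogonal-calculus counterpart. The plan is to exhibit a pair of functors realising the claimed equivalence of homotopy categories — from an $n$-homogeneous functor one extracts a classifying spectrum with group action, and from a spectrum with group action one builds back an $n$-homogeneous functor via the displayed formula — and then to check that the two composites are naturally equivalent to the identities. In each case the substantial direction is extracting the spectrum, and I would organise that by reducing the $n$-homogeneous situation, through a cross-effect (Goodwillie) or a derivative (Weiss) construction, to the linear case $n=1$, i.e. the classification of $1$-homogeneous functors by spectra.

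For the Goodwillie statement I would start from an $n$-homogeneous $F\in\wcal\Top$ and form the (homotopy) $n$-th cross-effect $\cref_n F(X_1,\dots,X_n)$, the total homotopy fibre of the $n$-cube $S\mapsto F\bigl(\bigvee_{i\notin S}X_i\bigr)$. Using that $F$ is $n$-excisive with $P_{n-1}F$ objectwise contractible, I would show $\cref_n F$ is reduced and $1$-excisive in each variable separately — a symmetric multilinear functor — carrying an honest $\Sigma_n$-action permuting its inputs. The linear input is then: a reduced $1$-excisive functor $L$ on finite based CW-complexes commutes with filtered homotopy colimits, and $1$-excisiveness gives $L(X)\simeq\Omega L(\Sigma X)$, so the spaces $L(S^k)$ form an $\Omega$-spectrum $\mathbb{L}$ with $L(X)\simeq\Omega^\infty(\mathbb{L}\smsh X)$ naturally in $X$. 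Iterating over the $n$ variables and carrying along the symmetry yields a spectrum $\Theta_F$ with $\Sigma_n$-action and a natural equivalence $\cref_n F(X_1,\dots,X_n)\simeq\Omega^\infty(\Theta_F\smsh X_1\smsh\cdots\smsh X_n)$. Restricting to the diagonal and passing to homotopy $\Sigma_n$-orbits produces a natural comparison between $F$ and $X\mapsto\Omega^\infty\bigl((\Theta_F\smsh X^{\smsh n})/h\Sigma_n\bigr)$, which I would prove to be an equivalence using Goodwillie's identification $F\simeq D_n^G F$ and the cross-effect description of $D_n^G$. For the reverse functor, given a spectrum $\Theta$ with $\Sigma_n$-action I would check directly that $G\co X\mapsto\Omega^\infty\bigl((\Theta\smsh X^{\smsh n})/h\Sigma_n\bigr)$ is $n$-homogeneous: the splitting $\bigl(\bigvee_{i\in S}X_i\bigr)^{\smsh n}\simeq\bigvee_{f\co\underline n\to S}X_{f(1)}\smsh\cdots\smsh X_{f(n)}$ shows the $(n+1)$-st cross-effect of $G$ vanishes — no $f\co\underline n\to\underline{n+1}$ is surjective — so $G$ is $n$-excisive, while its $n$-th cross-effect is the full multilinear summand (indexed by $\Sigma_n$), forcing $P_{n-1}G\simeq\ast$. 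A routine check that the two assignments are mutually inverse and natural on morphisms finishes this half.

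For the Weiss statement I would run the same argument with $\Sigma_n$ replaced by $O(n)$ and the smash powers $X^{\smsh n}$ replaced by the spheres $S^{\rr^n\ox V}\cong(S^V)^{\smsh n}$. Here the cross-effect is replaced by Weiss's derivative construction: iterating the fibration sequence $D_n^W E\to\wt_n E\to\wt_{n-1}E$ and working in the $n$-th derivative category $\jcal_n$ — whose morphism spaces are Thom spaces of $n$ copies of the orthogonal-complement bundle over the space of isometries, and hence carry an $O(n)$-action — one extracts from an $n$-homogeneous $E$ a spectrum $\Psi_E$ with $O(n)$-action. The linear case is the orthogonal-calculus classification of $1$-homogeneous functors by spectra with $O(1)$-action, proved by the same filtered-colimit and delooping argument; the additional $O(n)$-symmetry on $S^{\rr^n\ox V}$, beyond the $\Sigma_n$ one would see on a bare smash power, is precisely what the derivative bundles supply. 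One then identifies $D_n^W E(V)\simeq\Omega^\infty\bigl((\Psi_E\smsh S^{\rr^n\ox V})/hO(n)\bigr)$, checks as before that any spectrum with $O(n)$-action gives an $n$-homogeneous functor via the displayed formula, and verifies that the correspondences are mutually inverse on homotopy categories.

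I expect the work to concentrate in three places. First, the delooping step — that a reduced $1$-excisive functor is $\Omega^\infty$ of a spectrum — which rests on the connectivity and stability estimates behind $L(X)\simeq\Omega L(\Sigma X)$ and on functors on finite complexes commuting with filtered homotopy colimits. Second, threading the symmetry of the $n$-th cross-effect (respectively the $O(n)$-structure of the derivative) through the multilinear classification and the homotopy-orbit construction so that $\Theta_F$ (respectively $\Psi_E$) acquires a genuine, not merely homotopy-coherent, group action. Third, and this is the heart of the matter, establishing $F\simeq D_n^G F$ (respectively $E\simeq D_n^W E$) with the explicit formula — the real content of the slogan that homogeneous functors are determined by their derivatives — which demands a careful analysis of the excisive and polynomial towers and their layers. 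These are exactly the arguments carried out in \cite[Sections 2--5]{goodcalc3} and \cite[Theorem 7.3]{weiss95}.
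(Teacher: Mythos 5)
The paper does not prove this theorem itself but cites it from \cite[Sections 2--5]{goodcalc3} and \cite[Theorem 7.3]{weiss95}, and your outline is a faithful summary of exactly the strategy of those cited proofs: cross-effects/derivatives reducing to the (multi)linear case, delooping reduced $1$-excisive functors to spectra, and the identification of $D_n$ with the homotopy-orbit formula. Your proposal is correct and takes essentially the same approach as the source the paper relies on.
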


We now elaborate on how one obtains the spectra $\Theta_F$ and $\Psi_E$.
We begin with the orthogonal calculus setting. Recall that we denote by $\ical$ the category of finite dimensional real inner
product spaces with morphisms the isometries. %
Define a vector bundle over $\ical (U,V)$, for $U,V \in \ical$.
\[
\gamma_n (U,V) = \{ (f,x) | f: U \to V,   x \in \rr^n \otimes (V-f(U))\}
\]
The total space has a natural action of $O(n)$ due to the $\rr^n$ factor.
We then let $\jcal_n (U,V):= T\gamma_n (U,V)$, the associated Thom space.  
This is the cofibre in the sequence:
\[
\begin{array}{cccccc}
S \gamma_n (U,V)_+ & \lra & D \gamma_n (U,V)_+ & \lra & T\gamma_n (U,V) \\
\rcong & &\rcong \\
\{(f,x) \mid \ ||x|| =1\} && \{(f,x) \mid \ ||x|| \leq 1\}\\
\end{array}
\]
Recall that $T( \R^n \ra \ast) = S^n$ and $T(X=X) = X_+$, for compact $X$.
In particular, if we let $n=0$, we see that $\jcal_0(U,V)= \ical(U,V)_+$ as already defined.

There is a natural composition when looking at the bundles
\[
\begin{array}{cccccccc}
\gamma_n (V,X) &\x& \gamma_n (U,V) & \lra & \gamma_n (U,W) \\
(g,y)          &  & (f,x) & \mapsto & (g \circ f, y + (\R^n \ox g)x)\\
\end{array}
\]
where $(\R^n \ox g):\R^n \ox (V-f(U)) \ra \R^n \ox W$. This composition induces unital and associative maps
\[
\jcal_n (V,W) \smsh \jcal_n (U,V) \ra \jcal_n (U,W),
\]
which are $O(n)$-equivariant and functorial in the inputs. 

\begin{definition}\label{def:jcaln}
We define an $O(n)\lca \Top$-enriched category $\jcal_n$, whose objects are 
finite dimensional real inner product spaces and morphism spaces are given by $\jcal_n (U,V)$.

The map $i_n \co \jcal_0(U,V) \to \jcal_n(U,V)$, $f \mapsto (f,0)$ induces a map of enriched categories
$\jcal_0 \to \jcal_n$. 
\end{definition}

The map of enriched categories $i_n \co \jcal_0 \to \jcal_n$ induces a (restriction) functor $\res_0^n:= i_n^*$
from $\jcal_n \Top$ to $\jcal_0 \Top$. This functor has a right adjoint
$\ind_0^n$ by \cite[Proposition 2.1]{weiss95}. In the following we abuse notation
slightly and write $\res_0^n \ind_0^n$ as simply $\ind_0^n$.

\begin{definition}\label{def:nthderiv}
  For $E \in \jcal_0 \Top$, we define the 
  \textbf{unstable $n^{th}$-derivative of $E$}, 
  $\ind_0^n E \in \jcal_0 \Top$ by 
  \[
  \ind_0^n E (V) = \nat_{\jcal_0 \Top} (\jcal_n(V,-), E)
  \]
  Note that we can give $\ind_0^n E (V)$ an action of $O(n)$ by letting $O(n)$
  act on $\jcal_n(V,-)$. 
\end{definition}

In fact, we can give $\ind_0^n E$ much more structure. 
In \cite{barnesoman13}, it was shown that this is an object of the category $\orthdevcat$ of $O(n) \lca \Top$-enriched functors from
$\jcal_n$ to $O(n) \lca \Top$. 

\begin{remark}
As a functor in $\jcal_0 \Top$, we call $\ind_0^n E$ the \textbf{unstable $n^{th}$ derivative} of $E$. As $\orthdevcat$ has a stable model structure, it is logical to call 
$\ind_0^n E \in \orthdevcat$ the (stable) 
\textbf{$n$th derivative} of $F$.
%In keeping with language of orthogonal calculus, we call the functor $\diff_n(F) \in \wcal \Top$
%the \textbf{unstable  $n^{th}$ derivative}. 
\end{remark}

Furthermore, $\orthdevcat$ has a stable model structure and an adjunction with
spectra with an $O(n)$-action which is a Quillen equivalence. 
We will elaborate on this further in section \ref{sec:diagram}.
For now, we simply give the following (imprecise) theorem.

\begin{theorem}\label{thm:weissclass}
  The $O(n)$-spaces $\ind_0^n E(V)$ for $V \in \jcal_0$ 
  define a spectrum $\Psi_E$ with $O(n)$-action. 
  Futhermore, the $n$-homogeneous functor constructed from $\Psi_E$ by Theorem \ref{thm:classifications}
  is equivalent to $D_n^W E$.
\end{theorem}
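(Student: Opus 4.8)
The plan is to assemble the statement from Weiss's original classification \cite[Theorem 7.3]{weiss95} together with the model-categorical framework of \cite{barnesoman13}, building the spectrum by hand from the Thom-space description of $\jcal_n$. For the standard inclusion $\iota \co V \to V \oplus W$ the fibre of the bundle $\gamma_n(V, V\oplus W)$ over $\iota$ is $\rr^n \otimes W$, so there is an $O(n)$-equivariant insertion map $S^{\rr^n \otimes W} \to \jcal_n(V, V\oplus W)$, compatible with the composition maps of $\jcal_n$. Composing a natural transformation $\jcal_n(V,-) \to E$ first with this insertion and then with composition in $\jcal_n$ produces $O(n)$-equivariant maps
\[
S^{\rr^n \otimes W} \smsh \ind_0^n E(V) \lra \ind_0^n E(V \oplus W)
\]
that are unital, associative, and natural in $V$ and $W$. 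Specialising to $W = \rr$ and letting $V$ run over $\rr^0, \rr^1, \dots$ exhibits $\{ \ind_0^n E(V) \}_V$ as a prespectrum with $O(n)$-action, whose spectrification is $\Psi_E$. That these are exactly the structure maps packaging $\ind_0^n E$ as an object of $\orthdevcat$, and that passing to the associated spectrum with $O(n)$-action is homotopy invariant, is what \cite{barnesoman13} provides; I would cite it and defer the precise statement to Section \ref{sec:diagram}.

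To identify $\Psi_E$ with a spectrum classifying $D_n^W E$, I would first reduce to the case that $E$ is $n$-homogeneous. The key point is that the derived functor of $E \mapsto \ind_0^n E$ sees only the $n$-homogeneous layer of $E$: it carries $(n-1)$-polynomial functors to objectwise contractible functors and inverts the natural maps $E \to \wt_n E$, by the homotopical properties of the corepresenting functors $\jcal_n(V,-)$ established in \cite{weiss95, barnesoman13}. Granting this, applying $\ind_0^n$ to the fibration sequence $D_n^W E \to \wt_n E \to \wt_{n-1} E$ — and using that $\ind_0^n$, being a homotopy limit, preserves homotopy fibre sequences — yields $\ind_0^n E \simeq \ind_0^n D_n^W E$, and hence $\Psi_E \simeq \Psi_{D_n^W E}$. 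So it suffices to treat an $n$-homogeneous $E$.

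For $n$-homogeneous $E$, Weiss's classification \cite[Theorem 7.3]{weiss95} supplies a spectrum $\Theta$ with $O(n)$-action, unique up to equivalence, together with a natural weak equivalence
\[
E(V) \simeq \Omega^\infty \big( (\Theta \smsh S^{\rr^n \otimes V}) /h O(n) \big),
\]
and it identifies $\Theta$ with the $n$th derivative of $E$. Since the functor of Theorem \ref{thm:classifications} applied to $\Psi_E$ has precisely this shape, the only thing left is to match the two constructions of the derivative spectrum — Weiss's, via the sphere bundles sitting inside $\jcal_n(V,-)$, and ours, via $\nat_{\jcal_0 \Top}(\jcal_n(V,-), -)$ — as spectra with $O(n)$-action. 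This amounts to recognising Weiss's structure maps as the insertion maps $S^{\rr^n \otimes W} \to \jcal_n(V, V\oplus W)$ used above, which is a direct unwinding of definitions.

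The step I expect to be the main obstacle is the reduction: showing that the \emph{unstable} derivative $\ind_0^n$, before any stabilisation, inverts $E \to \wt_n E$ and annihilates $(n-1)$-polynomial functors. Concretely one must understand the homotopy type of the Thom spaces $\jcal_n(V,W)$ well enough to recognise $\jcal_n(V,-)$ as corepresenting $\ind_0^n$ in a homotopy-invariant way inside the $n$-homogeneous model structure on $\jcal_0 \Top$ — exactly the input the machinery of \cite{barnesoman13} is designed to supply. Once that is in hand, the remaining steps are formal manipulations with Thom spaces and the $O(n)$-action.
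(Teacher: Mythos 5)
The paper does not actually prove this statement: it is flagged there as an ``(imprecise) theorem'', with the content deferred to \cite[Theorem 7.3]{weiss95} and \cite{barnesoman13} and elaborated only later, in Section \ref{sec:diagram}, at the level of Quillen equivalences. Your sketch is therefore not following the paper's (nonexistent) argument but reconstructing the cited one, and it does so essentially correctly. The construction of the structure maps from the $O(n)$-equivariant insertions $S^{\rr^n\otimes W}\to\jcal_n(V,V\oplus W)$ over the standard inclusion is exactly how Weiss builds the derivative spectrum, and your identification of these with the $\jcal_n$-functoriality of $V\mapsto\nat(\jcal_n(V,-),E)$ is the right way to see that $\ind_0^n E$ lives in $\orthdevcat$ and forgets to a spectrum with $O(n)$-action.

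Two caveats on where the weight of the proof really sits. First, the reduction step --- that the derived $\ind_0^n$ kills $(n-1)$-polynomial functors and inverts $E\to\wt_n E$ --- is the genuine mathematical content, not a formal consequence of $\jcal_n(V,-)$ being a corepresenting object; it rests on Weiss's computation of the homotopy type of the Thom spaces $\jcal_n(V,W)$ (the fibration sequences relating $\ind_0^{i}E$ and $\ind_0^{i+1}E$, and the vanishing of the $n$th derivative of functors polynomial of degree $\le n-1$). You correctly identify this as the main obstacle but do not supply the argument; as written, your proof is conditional on it. Second, $\nat(\jcal_n(V,-),E)$ is a strict end, so asserting that it preserves homotopy fibre sequences and is homotopy invariant requires the cofibrancy of $\jcal_n(V,-)$ and fibrancy hypotheses on $E$ that the model structures of \cite{barnesoman13} are designed to provide; this should be made explicit rather than absorbed into ``being a homotopy limit''. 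With those two points referenced precisely, your outline is a faithful account of what the cited sources prove.
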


Now we turn to the homotopy functor calculus equivalent and consider a more direct way to construct the spectrum corresponding to $D_n^G F$. 

\begin{definition} \label{def:cref}
For $F \in \wcal \Top$ and an $n$-tuple of spaces in $\wcal$, $(X_1, \ldots, X_n)$, 
the \textbf{$n^{th}$--cross effect of $F$
at $(X_1, \ldots, X_n)$} is the space
\[
\cref_n(F)(X_1, \ldots, X_n) = \nat(\bigsmashprod{l=1}{n} \wcal(X_l,-), F)
\]
Pre-composing $\cref_n(F)$ with the diagonal map
$\wcal(X,Y) \to \bigsmashprod{i=1}{n} \wcal(X,Y)$
yields an object $\diff_n(F)$ of $\wcal \Top$.
\end{definition}

Reworking the above, we obtain a form much closer to that of the orthogonal calculus.

\begin{definition}\label{def:wcaln}
The category $\wcal_n$ is a $\Sigma_n \lca \Top$-enriched category with objects the finite dimensional $CW$-complexes and morphism spaces given by
\[
\wcal_n(A,B) = \bigsmashprod{k=1}{n} \wcal(A,B).
\]
The group $\Sigma_n$ acts on this space by permuting factors.
We may then define 
\[
\diff_n(F)(X) = \nat(\wcal_n(X,-), F)
\]
\end{definition}

%\todo[color=yellow!40, inline, author=R]{This has the same trouble as the previous comment (a few pages back) where $\orthdevcat$ was undefined. I again pasted in the definition, but we should decide whether the "official" definition (Def \ref{def:devorthcat} ) should be stated earlier and/or referenced or if we should attempt to work aruond referencing it}

In fact we can give $\diff_n(F)$ much more structure, 
\cite[Section 6]{barneseldred} shows that it can be considered as 
an object of the category $\devcat$ of $\Sigma_n \lca \Top$-enriched functors from $\wcal_n$ to $\Sigma_n \lca \Top$. 

In \cite{barneseldred} is also shown that (when equipped with a suitable model structure) 
$\devcat$ is Quillen equivalent to spectra with an $\Sigma_n$-action.

\begin{remark}
%We will view the functor $\diff_n(F)$ as a functor landing two different categories $\wcal \Top$ and $\devcat$. 
As a functor in $\wcal \Top$, we call $\diff_n (F)$ the \textbf{unstable $n$th derivative} of $F$. As $\devcat$ has a stable model structure, it is logical to call $\diff_n (F) \in \devcat$ the (stable) \textbf{$n$th derivative} of $F$.
%In keeping with language of orthogonal calculus, we call the functor $\diff_n(F) \in \wcal \Top$
%the \textbf{unstable  $n^{th}$ derivative}. 
\end{remark}

We will elaborate on this further in section \ref{sec:diagram}.
For now we simply give the following (imprecise) theorem.

\begin{theorem}\label{thm:goodclass}
  The $\Sigma_n$-spaces $\diff_n F(X)$ define a spectrum $\Theta_F$ with $\Sigma_n$-action.
  Furthermore, the $n$-homogeneous functor constructed from $\Theta_F$ by Theorem \ref{thm:classifications}
  is $D_n^G F$.
\end{theorem}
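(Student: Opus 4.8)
The plan is to reduce this statement to the already-established orthogonal-calculus version, Theorem \ref{thm:weissclass}, together with the classification Theorem \ref{thm:classifications}, rather than re-deriving everything from scratch. First I would recall the two-step nature of the claim: (a) that the sequence of $\Sigma_n$-spaces $\{\diff_n F(X)\}$ assembles into an actual spectrum $\Theta_F$ with $\Sigma_n$-action, and (b) that the $n$-homogeneous functor produced from $\Theta_F$ via the formula $X \mapsto \Omega^\infty\big((\Theta_F \smsh X^{\smsh n})/h\Sigma_n\big)$ of Theorem \ref{thm:classifications} is weakly equivalent to $D_n^G F$. For part (a), the key input is that $\diff_n F$ lives in $\devcat$, the category of $\Sigma_n \lca \Top$-enriched functors from $\wcal_n$ to $\Sigma_n \lca \Top$ (from \cite{barneseldred}, Section 6). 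The structure maps of the spectrum come from the assembly maps of the enriched functor $\diff_n F$ evaluated on the spheres $S^k \in \wcal$: concretely, the bonding map $S^n \smsh \diff_n F(S^k) \to \diff_n F(S^{k+1})$ is induced by $\wcal_n$-functoriality applied to the map $S^{\rr^n} \to \wcal_n(S^k, S^{k+1})$ built from the diagonal $S^n \to (S^1)^{\smsh n} \hookrightarrow \bigsmashprod{}{} \wcal(S^k, S^{k+1})$. One must check these are unital and coherently associative, which is where the $\Sigma_n$-equivariance (permuting the $n$ smash factors) is used.

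For part (b), I would invoke the classification Theorem \ref{thm:classifications}: since $D_n^G F$ is $n$-homogeneous (as noted right after the definition of homogeneous functors, because $P_{n-1}$ commutes with homotopy fibres), it corresponds under the equivalence to some spectrum with $\Sigma_n$-action, and it suffices to identify that spectrum with $\Theta_F$. The cleanest route is to observe that $\diff_n$ is, up to the reindexing encoded in Definition \ref{def:wcaln}, precisely the diagonal of the $n$th cross-effect $\cref_n$ of Definition \ref{def:cref}, and that Goodwillie's original construction (\cite{goodcalc3}, Sections 2--5) builds the classifying spectrum $\partial_n F$ of $D_n^G F$ exactly as the multilinearized, stabilized cross-effect. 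So the content is that the model-categorical repackaging of \cite{barneseldred} does not change the stable homotopy type: the Quillen equivalence between $\devcat$ and spectra with $\Sigma_n$-action sends $\diff_n F$ to the spectrum underlying $\partial_n F$. I would spell this out by comparing the defining natural-transformation formula $\diff_n(F)(X) = \nat(\wcal_n(X,-), F)$ with Goodwillie's cross-effect and checking that evaluation on spheres recovers the standard bonding maps.

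The main obstacle I anticipate is part (b): pinning down the identification of $\Theta_F$ with Goodwillie's classifying spectrum at the level of stable equivalence, not merely objectwise weak equivalence of the constituent $\Sigma_n$-spaces. One has to be careful that the spectrum structure coming from the $\wcal_n$-enrichment agrees (up to a coherent system of equivalences) with the one implicit in Goodwillie's $\partial_n F$, and that passing to $\Omega^\infty((-)\smsh X^{\smsh n})_{h\Sigma_n}$ is homotopy-invariant in the relevant sense — this needs cofibrancy/fibrancy hypotheses, or a fibrant-replacement argument, in $\devcat$. A secondary technical point is checking that the assembly maps defining the spectrum structure are genuinely associative and unital on the nose (or at least up to coherent homotopy), which is a diagram-chase using the composition in $\wcal_n$; I would either cite \cite{barneseldred} for this or note it follows formally from $\diff_n F$ being an enriched functor on $\wcal_n$. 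Given that both ingredients are essentially packaged in \cite{barneseldred} and \cite{goodcalc3}, I expect the proof to be short, amounting to assembling these citations and making the comparison map explicit.
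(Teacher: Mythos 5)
Your proposal follows essentially the same route as the paper, which states this as an ``(imprecise)'' background theorem without proof: the spectrum structure comes from $\diff_n F$ being an object of $\devcat$ (via the $\wcal_n$-enrichment evaluated on spheres), and the identification with $D_n^G F$ comes from the Quillen equivalence of $\devcat$ with $\Sigma_n$-spectra together with Goodwillie's classification, all cited from \cite[Section 6]{barneseldred} and \cite{goodcalc3}. The coherence and stable-equivalence issues you flag are exactly what is packaged in those references and elaborated in Section \ref{sec:diagram} (see Proposition \ref{prop:bighocommute}).
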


%We note that to view it as a spectrum requires just a shift in model structure on $\wcal\Top$, which justifies our use of the term derivative instead of perhaps a term like ``unstable approximation" to the derivative. 

%\todo[inline, color=magenta!10, author=R]{(16.05.2015) First attempt at combining the two sections. The separated versions folow, commented out.}

%%%%%%%%%%%%%%%%%%%%%%%%%%%%%%%%%%%%%%%%%%%%%%%%%%%%%%%%%%%%%%%%%%
%%%%%%%%%%%%%%%%%%%%%%%%%%%%%%%%%%%%%%%%%%%%%%%%%%%%%%%%%%%%%%%%%%
\subsection{Connectivity and analyticity}\label{sec:connanalyt}
%%%%%%%%%%%%%%%%%%%%%%%%%%%%%%%%%%%%%%%%%%%%%%%%%%%%%%%%%%%%%%%%%%
%%%%%%%%%%%%%%%%%%%%%%%%%%%%%%%%%%%%%%%%%%%%%%%%%%%%%%%%%%%%%%%%%%
Notions of convergence and how well a functor is approximated by its tower of $n$-excisive (respectively, $n$-polynomial) approximations rely on the following definitions of stably $n$-excisive and $\rho$-analyticity, as well as their orthogonal analogs. The results for orthogonal calculus are a re-phrasing and expansion on results found in \cite{weisserrata}.  We will make use of these especially in Section \ref{sec:equivtowers}. 

%%%%%%%%%%%%%%%%%%%%%%%%%%%
%\subsubsection{Homotopy functor calculus}
\label{sec:stabndef}

Roughly speaking, if $F \in \wcal \Top$ is stably $n$-excisive then it takes strong cocartesian $(n+1)$-cubes to \emph{almost} cartesian cubes. 

%%%%%%%%%%%%
 \begin{definition}\cite[Defn 4.1]{gw91}\label{def:stabnexc}
$F \in \wcal \Top$ is \textbf{stably $n$-excisive} or satisfies \textbf{stable $n^{th}$ order excision} if the following holds for some numbers $c$ and $\kappa$:
\begin{quote}
\begin{description}
\item[$E_n (c, \kappa):$] If $\mathscr{X}: \mathscr{P} (S) \ra \mathscr{C}$ is any strongly co-Cartesian $(n+1)$-cube such that $\forall s \in S$ the map $\mathscr{X} (\emptyset)\ra \mathscr{X} (s)$ is $k_s$-connected and $k_s \geq \kappa$, then the diagram $F(\mathscr{X})$ is $(-c+ \sum k_s)$-Cartesian.
\end{description}
\end{quote}
 \end{definition}
 %%%%%%%%%%%%

It is usual to instead consider the following property which is satisfied for some $\rho$ by many important functors. 
\begin{definition}\cite[Defn 4.2]{gw91}\label{def:kanalytic} $F \in \wcal \Top$ is \textbf{$\rho$-analytic} if there is some number $q$ such that $F$ satisfies $E_n(n\rho-q, \rho+1)$ for all $n\geq 1$.
  \end{definition}

\begin{remark}\label{rem:analytic-stab}
By definition, $\rho$-analytic functors are stably $n$-excisive for all $n$. 	
\end{remark}

One of the main consequences of $\rho$-analyticity is the following, which we formally define as it is a relevant notion for orthogonal calculus as well; for functors in $\jcal \Top$, we use dimension instead of connectivity. 

\begin{definition}\label{def:weakanalytic}
We say that $F\in \wcal \Top$ is \textbf{weakly $\rho$-analytic} if for any space of connectivity at least $\rho$, $F(X) \overset{\simeq}{\lra} P_\infty F(X)$. If $F$ is $\rho$-analytic, then $F$ is weakly $\rho$-analytic. 

We say that $E\in \jcal_0 \Top$ is \textbf{weakly $\rho$-analytic} if for any vector space of dimension at least $\rho$, $E(X) \overset{\simeq}{\lra} \wt_\infty E(X)$. 
\end{definition}

 %%%%%%%%%%%%
 \begin{definition}\cite[Definition 1.2]{goodcalc3} \label{def:ordern}
 A map $u: F\ra G$ in $\wcal \Top$ satisfies $O_n ( c, \kappa)$
 if $\forall k \geq \kappa \; \forall X \in C$ with $X$ $k$-connected, $u_X:FX \ra GX$ is $(-c+(n+1)k)$-connected.

 We say that $F$ and $G$ \textbf{agree to order $n$} (via $u$) if $u$ satisfies $O_n(c,\kappa)$ for \textbf{some} $c, \kappa$. We also call
 $u$ an \textbf{agreement to order $n$}.
 \end{definition}
 %%%%%%%%%%%%

The orthogonal analog translates connectivity into dimension: 
  %%%%%%%%%%%
 \begin{definition}
 \label{def:orthordn}
Let $p: F \ra G$ be a morphism in $\jcal_0\Top$. Suppose there exists a $b$ such that $p:FW \ra GW$ is $(-b + (n+1)\textrm{dim}(W))$-connected for all $W \in \mathcal{J}_0$.  Then we say that \textbf{$F$ and $G$ agree up to orth-order $n$} via $p$ or $p$ is an \textbf{orth-order-$n$ agreement} between $F$ and $G$.
 \end{definition}
  %%%%%%%%%%%

%%%%%%%%%%%%
 \begin{prop}\cite[Proposition 1.5]{goodcalc3}
If $F$ is \textbf{stably $n$-excisive}, then
\begin{enumerate}[label=\arabic*]
\item $P_n F$ is $n$-excisive and
\item $F$ agrees with $P_n F$ \textbf{to order $n$} (via $p_n: F \ra P_n F$)
\end{enumerate}
 \end{prop}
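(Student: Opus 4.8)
The plan is to reduce the statement to a direct, almost bookkeeping-level consequence of the definition of stable $n$-excision, $E_n(c,\kappa)$, together with the construction $P_n F = \hocolim_k \T_n^k F$. The two claims are handled separately but share a common engine: understanding the connectivity behaviour of the single application of $\T_n$, i.e.\ of the natural map $t_n \co F \to \T_n F$ where $\T_n F(X) = \holim_{\emptyset \neq S \subseteq \underline{n+1}} F(S \join X)$.

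For the first claim, that $P_n F$ is $n$-excisive, I would argue as in Goodwillie: $\T_n$ improves the failure of $n$-excision by one, so iterating and passing to the homotopy colimit forces the map $P_n F(X) \to \T_n(P_n F)(X)$ (equivalently, the relevant cartesian-ness map for $(n+1)$-cubes) to be a weak equivalence. Concretely, I would first show that if $F$ satisfies $E_n(c,\kappa)$ then for a strongly cocartesian $(n+1)$-cube $\xcal$ whose initial maps are $k_s$-connected with $k_s \geq \kappa$, the cube $\T_n F(\xcal)$ is $(-c + 1 + \sum k_s)$-cartesian — one better than $F(\xcal)$ — using the cube-of-cubes formed by applying $F(- \join X)$ over $S \subseteq \underline{n+1}$ and the standard "a holim of cubes is as cartesian as the worst one, plus the holim direction". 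Then $\T_n^k F$ satisfies $E_n(c-k,\kappa)$, so in the colimit the defect disappears and $P_n F$ takes strongly cocartesian $(n+1)$-cubes to cartesian ones; since $P_n F$ is a homotopy functor (it is built from homotopy (co)limits and $F$ preserves weak equivalences) this gives $n$-excision. I also need that $\T_n$ commutes with filtered homotopy colimits in the relevant sense to move the $\hocolim$ past the finite $\holim$, which is standard.

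For the second claim, that $p_n \co F \to P_n F$ is an agreement to order $n$, I would again first analyse one step: show the map $t_n \co F(X) \to \T_n F(X)$ is highly connected when $X$ is $k$-connected, namely roughly $(-c + (n+1)(k+1))$-connected, by applying $E_n(c,\kappa)$ to the strongly cocartesian $(n+1)$-cube $S \mapsto S \join X$ (whose non-initial spaces are the cones, contractible, and whose initial maps $X \to CX \simeq \ast$ are $(k+1)$-connected). This says $F$ and $\T_n F$ agree to order $n$. Then, since each subsequent application of $\T_n$ only improves connectivity (the spaces stay at least as connected), the maps $F \to \T_n^k F$ agree to order $n$ with a uniform $\kappa$, and finiteness of the obstruction plus a colimit/Milnor-sequence argument shows $F \to P_n F = \hocolim_k \T_n^k F$ still agrees to order $n$.

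The main obstacle I expect is the careful connectivity estimate for how $\T_n$ interacts with cocartesian cubes: one must organize the $(n+1)$-cube $\xcal$ together with the poset of nonempty subsets of $\underline{n+1}$ indexing the $\holim$ into an $(n+1)$-cube of $(n+1)$-cubes, identify which faces are cocartesian versus whose cartesian-ness is controlled by the hypothesis, and then invoke the "generalized Blakers--Massey / cube-of-cubes" connectivity lemma with the correct numerics so that the $+1$ improvement comes out cleanly. Keeping track of the uniform constants $c$ and $\kappa$ through the iteration and into the homotopy colimit — so that the final $O_n(c',\kappa')$ genuinely holds for a single pair $(c',\kappa')$ — is the part most prone to off-by-one errors, but it is exactly the content of \cite[Proposition 1.5]{goodcalc3} and I would follow that proof.
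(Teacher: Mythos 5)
Your outline is, in substance, Goodwillie's own proof of \cite[Proposition 1.5]{goodcalc3}; the paper does not prove this statement but simply cites it, so there is no alternative argument in the paper to compare against. The two engines you identify — the one-step improvement of cartesianness under $\T_n$ via the cube-of-cubes $U \mapsto S \join \xcal(U)$, and the connectivity estimate for $t_n$ obtained by feeding the cube $S \mapsto S \join X$ into $E_n(c,\kappa)$ — are exactly the right ones, and your handling of part 2 is sound: $O_n(c',\kappa')$ is itself a statement only about sufficiently connected $X$, the connectivity of the composites $F \to \T_n^k F$ is bounded below by that of the first map, and this bound survives the filtered homotopy colimit.

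There is one concrete gap in your argument for part 1 as written. You assert that $\T_n^k F$ satisfies $E_n(c-k,\kappa)$ with the threshold $\kappa$ unchanged. That only controls strongly cocartesian $(n+1)$-cubes whose initial maps are at least $\kappa$-connected, so in the limit it shows that $P_n F$ takes \emph{such} cubes to cartesian cubes — which is strictly weaker than $n$-excision, since the defining cubes $S \mapsto S \join X$ have initial maps of connectivity only $\mathrm{conn}(X)+1$, which for general $X \in \wcal$ can lie far below $\kappa$. The correct one-step statement is that $E_n(c,\kappa)$ implies $\T_n F$ satisfies $E_n(c-1,\kappa-1)$: in the cube of cubes, joining $\xcal$ with a nonempty $S$ raises the connectivity of the initial maps by at least one, so the hypothesis applies one step below the original threshold. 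Iterating gives $E_n(c-k,\kappa-k)$, and for any fixed strongly cocartesian cube the condition $k_s \geq \kappa-k$ becomes vacuous once $k$ is large, while the cartesianness estimate tends to infinity; this is what yields $n$-excision of $P_n F$ unconditionally. With that correction your proof goes through.
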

 %%%%%%%%%%%

 %%%%%%%%%%%
\begin{lemma}\label{lem:ordnorthordn}
If $F$ is stably $n$-excisive, then $p_{n, S^V}: F\circ S^V \ra P_n F\circ S^V$ is an orth-order-$n$ agreement.
\end{lemma}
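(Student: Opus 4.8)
The plan is to translate the homotopy-theoretic agreement statement for $F$ and $P_n F$ into the dimension-graded statement required by Definition \ref{def:orthordn}, using the key fact that the one-point compactification $S^V$ of an inner product space $V$ of dimension $d$ is $(d-1)$-connected. First I would invoke the Goodwillie result \cite[Proposition 1.5]{goodcalc3} quoted just above: since $F$ is stably $n$-excisive, the natural map $p_n \co F \to P_n F$ is an agreement to order $n$, so there exist constants $c$ and $\kappa$ such that $p_{n,X} \co F(X) \to P_n F(X)$ is $(-c + (n+1)k)$-connected whenever $X$ is $k$-connected with $k \geq \kappa$.

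Next I would feed $X = S^V$ into this statement. For $W \in \jcal_0$ with $\dim W = d$, the space $S^W$ is $(d-1)$-connected. Hence, provided $d - 1 \geq \kappa$, i.e. $d \geq \kappa + 1$, the map $p_{n, S^W} \co F(S^W) \to P_n F(S^W)$ is $\big(-c + (n+1)(d-1)\big)$-connected, which we can rewrite as $\big(-(c + n + 1) + (n+1)d\big)$-connected. Setting $b = c + n + 1$, this is exactly the orth-order-$n$ connectivity estimate with constant $b$, for all $W$ of dimension at least $\kappa + 1$. The only loose end is the finitely many inner product spaces $W$ of dimension less than $\kappa + 1$: for these there are only finitely many homotopy types $F(S^W)$ and $P_n F(S^W)$ involved (up to the relevant equivalences), so after enlarging $b$ if necessary we may absorb them and obtain the connectivity bound uniformly in $W$, as Definition \ref{def:orthordn} demands (no lower bound on $\dim W$ is built into that definition, so this bookkeeping step is genuinely needed).

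The main obstacle, such as it is, is precisely this last bookkeeping: Definition \ref{def:ordern} of agreement to order $n$ only controls $k$-connected $X$ for $k$ above some threshold $\kappa$, whereas Definition \ref{def:orthordn} asks for a single constant $b$ valid for \emph{all} $W \in \jcal_0$. One must check that the low-dimensional cases can be swept into $b$ — this uses that there are only finitely many dimensions below the threshold and that "$m$-connected" is a vacuous or easily-satisfied condition once $m$ is very negative, together with the fact that $F$ and $P_n F$ take values in spaces (so any two have a finite common lower bound on connectivity of the comparison map, namely $-1$ or the connectivity of the spaces involved). Everything else is a direct substitution of $k = \dim W - 1$ into the Goodwillie estimate and a renaming of constants.
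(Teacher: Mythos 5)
Your proof is correct and follows essentially the same route as the paper: substitute $\mathrm{conn}(S^W)=\dim(W)-1$ into the order-$n$ agreement estimate of \cite[Proposition 1.5]{goodcalc3} and rename the constant to $b=c+n+1$. The extra bookkeeping you carry out for $W$ with $\dim(W)<\kappa+1$ (enlarging $b$ so that the required connectivity $-b+(n+1)\dim(W)$ is so negative in those finitely many dimensions that the condition is vacuous) addresses a point the paper's proof silently skips, and your handling of it is right.
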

 %%%%%%%%%%%

\begin{proof}
The connectivity of $S^V$ is dim$(V)-1$ and 
$F$ is stably $n$-excisive, so \cite[Proposition1.5]{goodcalc3}
implies that $p_{n, S^V}: F\circ S^V \ra P_n F\circ S^V$ has connectivity
\[
(-c+(n+1)\text{conn}(S^V))) = (-c-(n+1) +(n+1)\text{dim}(V))
\]
 for some $c$. Taking $-b = -c+(n+1)$, we conclude that $p_{n, S^V}: F\circ S^V \ra P_n F\circ S^V$ is an orth-order-$n$ agreement.
\end{proof}

 %%%%%%%%%%%
\begin{prop}\cite[Proposition 1.6]{goodcalc3}\label{prop:gc3-1.6}
 Let $u: F \ra G$ be a map in $\wcal \Top$. If $F$ and $G$ \textbf{agree to order $n$} (via $u$) then the induced map $P_nu: P_n F \ra P_n G$ is an objectwise weak equivalence.
 \end{prop}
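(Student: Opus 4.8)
I would run the standard argument, which rests on the identity $P_n F = \hocolim_k \T_n^k F$ in $\wcal \Top$: it suffices to show that each application of $\T_n$ strictly improves the order of agreement of $u$, so that the maps $\T_n^k u$ become uniformly highly connected on all of $\wcal$ as $k$ grows; the homotopy colimit over $k$ then forces $P_n u$ to be an objectwise weak equivalence.

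The heart of the matter is the following lemma: if $u \colon F \to G$ satisfies $O_n(c,\kappa)$, then $\T_n u$ satisfies $O_n(c-1,\kappa-1)$. Since $\T_n F(X) = \holim_{S \in \mathscr{P}_0(\underline{n+1})} F(S \join X)$, the map $\T_n u_X$ is the homotopy limit over $\mathscr{P}_0(\underline{n+1})$ of the maps $u_{S \join X} \colon F(S \join X) \to G(S \join X)$. Two elementary inputs combine here. First, for $S \neq \emptyset$ the space $S \join X$ is one degree more connected than $X$: it is contractible when $\lvert S\rvert = 1$, and for $\lvert S\rvert \ge 2$ it is a homotopy pushout of cones on $X$, so $\mathrm{conn}(S \join X) \ge \mathrm{conn}(X) + 1$ in every case. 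Hence when $X$ is $k$-connected with $k \ge \kappa - 1$, each $u_{S \join X}$ is $\big(-c + (n+1)(k+1)\big)$-connected. Second, the nerve of the poset $\mathscr{P}_0(\underline{n+1})$ is $n$-dimensional, and a homotopy limit over such a poset of uniformly $m$-connected maps is $(m-n)$-connected; taking $m = -c + (n+1)(k+1)$ gives that $\T_n u_X$ is $\big(-(c-1) + (n+1)k\big)$-connected, which is exactly the statement $O_n(c-1,\kappa-1)$.

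Iterating the lemma, $\T_n^k u$ satisfies $O_n(c-k,\kappa-k)$. Every object of $\wcal$ is a nonempty based CW-complex and hence $(-1)$-connected, so as soon as $k \ge \kappa + 1$ the connectivity hypothesis is satisfied by every $X \in \wcal$; for such $k$ and all $X$ the map $\T_n^k u_X \colon \T_n^k F(X) \to \T_n^k G(X)$ is therefore $(k - c - n - 1)$-connected. Fixing $X$ and letting $k \to \infty$, these connectivities tend to infinity, so the induced map of sequential homotopy colimits $P_n u_X = \hocolim_k \T_n^k u_X$ is an isomorphism on all homotopy groups, i.e. a weak equivalence; as $X$ was arbitrary, $P_n u$ is an objectwise weak equivalence.

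The step I expect to be the main obstacle is the homotopy-limit connectivity estimate in the lemma, namely that $\holim$ over $\mathscr{P}_0(\underline{n+1})$ loses at most $n$ degrees of connectivity. This is standard --- one can exhibit the homotopy limit as an $n$-fold iterated homotopy pullback along the rank filtration of the subset poset, each stage costing a single degree, or appeal to the Bousfield--Kan homotopy-limit spectral sequence --- but carrying the pair of constants $(c,\kappa)$ faithfully through the iteration, and in particular justifying that $\kappa$ may be pushed down to $-1$ because every finite based CW-complex is $(-1)$-connected, is where the bookkeeping must be done carefully. The other ingredients --- the effect of joins on connectivity and the fact that a homotopy colimit of increasingly connected maps is a weak equivalence --- are routine.
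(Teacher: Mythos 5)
Your argument is correct and is essentially Goodwillie's own proof of \cite[Proposition 1.6]{goodcalc3}: the paper quotes this result without proof, and the cited source proceeds exactly as you do, showing that $O_n(c,\kappa)$ improves to $O_n(c-1,\kappa-1)$ under $\T_n$ (via the join raising connectivity by one and the holim over the $n$-dimensional poset $\mathscr{P}_0(\underline{n+1})$ costing at most $n$) and then passing to the sequential homotopy colimit. Your bookkeeping of the constants and the reduction to $\kappa\le -1$ using that based CW-complexes are $(-1)$-connected are both sound.
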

 %%%%%%%%%%%

  %%%%%%%%%%% %%%%%%%%%%%%%%%%%%%%%%%%%%%%%%%%%%%%%%%%%%%%%%%%%%%%%%%%%%%%%%%%%%
 %\subsubsection{Results in or derived from the Weiss erratum}
 %\subsection{Dimension and orthogonal weak analyticity}%, stable $n$-excision}
 \label{sec:orthstabndef}

Proposition  \ref{prop:gc3-1.6} has the following orthogonal calculus analogue. The proof of this result is part of the proof of \cite[Lemma e.7]{weisserrata}.

 %%%%%%%%%%%
\begin{prop} \label{lem:ordnWneq}
Let $F,G$ be functors in $\jcal_0 \Top$. If $p: F\ra G$ is an \textbf{orth-order-$n$ agreement}, then $\wt_n(p): \wt_n F \ra \wt_n G$ is an objectwise weak equivalence.
\end{prop}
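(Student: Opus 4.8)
The plan is to mimic the structure of the proof of Proposition \ref{prop:gc3-1.6} (Goodwillie's \cite[Proposition 1.6]{goodcalc3}), transported into the orthogonal setting, using the polynomial approximation $\wt_n$ in place of $P_n$ and the notion of orth-order-$n$ agreement in place of agreement to order $n$. The key point is that $\wt_n E = \hocolim_k \tau_n^k E$, and $\tau_n$ is built out of a finite homotopy limit over the poset of nonzero subspaces of $\rr^{n+1}$; so it suffices to understand how $\tau_n$ interacts with connectivity estimates phrased in terms of $\dim W$. First I would record the following dimension-shift: if $0 \neq U \subseteq \rr^{n+1}$, then $\dim(U \oplus V) = \dim U + \dim V \geqslant \dim V + 1$, and the poset $\{0 \neq U \subseteq \rr^{n+1}\}$ has a specific (finite) structure, with the ``codimension'' of a subspace bounded by $n+1$. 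Feeding an $(-b + (n+1)\dim W)$-connected map into a homotopy limit over this poset improves connectivity in a controlled way: the worst term is $U$ one-dimensional, giving $(-b + (n+1)(\dim V + 1))$-connected maps on each vertex, and taking $\holim$ over a punctured $(n+1)$-cube-like diagram costs at most $n+1$ in connectivity. The net effect is that $\tau_n(p)_V \co \tau_n F(V) \to \tau_n G(V)$ is again $(-b' + (n+1)\dim V)$-connected for some new constant $b'$ (in fact one can check $b'$ can be taken no worse than $b$, since the $+1$ in each $\dim(U\oplus V)$ term more than compensates for the $\holim$ penalty, but for the statement we only need \emph{some} $b'$).

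Iterating, $\tau_n^k(p)_V$ is $(-b_k + (n+1)\dim V)$-connected, and because the $+1$ gain dominates the fixed $\holim$ cost, the constants $b_k$ do not run away — indeed the connectivity of $\tau_n^k(p)_V$ tends to $\infty$ with $k$ for each fixed $V$. Since $\wt_n E(V) = \hocolim_k \tau_n^k E(V)$ is a sequential homotopy colimit and homotopy colimits commute with and preserve connectivity of maps (a map that is arbitrarily highly connected on a cofinal tail of a sequential hocolim induces a weak equivalence on hocolims), we conclude $\wt_n(p)_V \co \wt_n F(V) \to \wt_n G(V)$ is a weak homotopy equivalence for every $V$, which is exactly the assertion. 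One should also invoke that $\wt_n F$ and $\wt_n G$ are the correct derived functors (fibrant replacements), but since the excerpt defines $\wt_n$ directly as $\hocolim \tau_n^k$, the argument above operates at that level and no extra model-categorical bookkeeping is needed.

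The main obstacle, and the step deserving the most care, is the quantitative connectivity bookkeeping for $\tau_n$: one must make precise (i) the connectivity of a homotopy limit over the poset $\pcal$ of nonzero subspaces of $\rr^{n+1}$ in terms of the connectivities of its vertices — this is a standard but slightly delicate ``dual Blakers–Massey / cube'' estimate, where the relevant invariant is the length of chains in $\pcal$ (bounded by $n+1$) — and (ii) that the dimension shift $\dim(U \oplus V) \geqslant \dim V + 1$ genuinely beats that fixed cost, so that iterating $\tau_n$ drives the connectivity of $\tau_n^k(p)_V$ to infinity rather than merely keeping it bounded below by $(n+1)\dim V$. Since the excerpt tells us this is exactly part of the proof of \cite[Lemma e.7]{weisserrata}, I would cite that for the precise constants if a fully self-contained estimate is not wanted, but the shape of the argument is the three-line chain: orth-order-$n$ agreement $\Rightarrow$ $\tau_n$ preserves it (with controlled constants) $\Rightarrow$ connectivity $\to \infty$ under iteration $\Rightarrow$ equivalence after $\hocolim_k$.
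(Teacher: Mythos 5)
Your argument follows the same route as the proof the paper points to (the paper does not prove this proposition itself but cites the proof of Lemma e.7 of Weiss's erratum): show that $\tau_n$ converts an orth-order-$n$ agreement into a map of strictly higher connectivity at each $V$, iterate, and pass to the sequential $\hocolim$ defining $\wt_n$. The outline is correct, and your final step (a map of sequential diagrams whose connectivities tend to infinity induces an equivalence of homotopy colimits) is fine. One quantitative point needs tightening, and it is exactly the point on which the whole argument turns: you state that the homotopy limit over the punctured-cube-like poset ``costs at most $n+1$'' in connectivity, while each vertex gains $(n+1)$ from $\dim(U\oplus V)\geq \dim V+1$. As literally written this gives net gain $\geq 0$, which only keeps the constants $b_k$ bounded and does \emph{not} force the connectivity of $\tau_n^k(p)_V$ to infinity; the iteration would stall. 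What one actually needs, and what Weiss proves, is that the penalty is at most $n$ (for the finite poset $\pcal_0(\underline{n+1})$ of nonempty subsets, a holim of $c$-connected vertices is $(c-n)$-connected), so that each application of $\tau_n$ gains at least $+1$ and $\tau_n^k(p)_V$ is $((n+1)\dim V - b + k)$-connected. A second, smaller caveat: the indexing category for $\tau_n$ is the topological poset of nonzero subspaces of $\rr^{n+1}$, not a finite punctured cube, so the connectivity estimate for the holim requires the comparison carried out in the erratum rather than the bare cube estimate; your deferral to \cite[Lemma e.3, e.7]{weisserrata} for the precise constants is therefore the right move, but the ``$+1$ beats the cost'' claim should be stated with the cost $n$, not $n+1$.
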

 %%%%%%%%%%%
% \begin{proof}
% Repeated application of \cite[Lemma e.3]{weisserrata} shows that the connectivity of
% \[
% \tau_n^k (p): \tau^k_n FW \ra \tau_n^k G(W)
% \]
% tends to infinity as $k$ goes to infinity, for any $W$ in $\mathcal{J}$. Therefore, $\wt_n(p)$ is an equivalence.
% \end{proof}

%   %%%%%%%%%%%
%  \begin{prop}\cite[Lemma e.3]{weisserrata}
%  Let $p: F \ra G$ be an orth-order-$n$ agreement. Then $\tau_n (p): \tau_n F(W) \ra \tau_n G(W)$ is an orth-order-$(n+1)$-agreement.
%  \end{prop}
%  %%%%%%%%%%

\begin{remark}\label{rem:decrease}
Notice that since the connectivity of $S^V$ is dim$(V)-1$, if $F \in \wcal \Top$ is $\rho$-analytic, we expect $S^\ast F$ to be (weakly) $(\rho+1)$-analytic. See Corollary \ref{cor:GSVanlytic} and Example \ref{ex:SVanalytic}.
\end{remark}

\section{Equivalence of the Weiss and Goodwillie tower under \texorpdfstring{$S^\ast$}{S*}}\label{sec:equivtowers}
%%%%%%%%%%%%%%%%%%%%%%%%%%%%%%%%%%%%%%%%%%

%\todo[color=yellow, author=Dave, inline]{I reckon we can just call this a subcategory, cite[Remark 4.7]{mmss01} essentially says the same thing.}

We can view $\jcal_0$ via $V\mapsto S^V$ as a subcategory of $\wcal$
(see \cite[Remark 4.7]{mmss01}).  Let $F$ be a homotopy functor, i.e. $F \in \wcal \Top$. We then call $V \mapsto F\circ S^V=(S^\ast F)(V)$ its \textit{restriction} to the image of $\jcal_0$ in $\wcal$.
In this section we will show that the restriction of the Goodwillie tower of a $\rho$-analytic functor is weakly equivalent to the Weiss tower of its restriction.
Definitions of the terms $\rho$-analytic and stably $n$-excisive may be found in Section \ref{sec:stabndef}. Heuristically, these are both connectivity assumptions on what a functor does to strongly cocartesian cubes.
% with analytic 
% (for analytic, it's for all $n$, for the other, a fixed $n$).

% \begin{prop}\label{prop:nexc-north}
% For $G$ a homotopy functor, the map $w_n: S^\ast (P_nG) \ra \wt_n (S^\ast (P_n G) )$ is an equivalence. Equivalently, restriction to $\jcal_0$ of an $n$-excisive functor yields an $n$-polynomial functor.
% \end{prop}

% Following from Proposition\ref{prop:nexc-north}, a weak factorization of the map $p_{n,S^V}: S^\ast G (V) \ra S^\ast(P_nG)(V)$ via
% \[
% \xymatrix{
% (S^\ast G)(V) \ar[rr]^{p_{n,S^V}}\ar[d] & &S^\ast(P_n G)(V) \ar[d]^\simeq\\
% \wt_n (S^\ast G)(V) \ar[rr]^{\wt_n(p_{n,S^V})} && \wt_n S^\ast(P_nG)(V)
% }
% \]
% The next proposition shows that the bottom map in the square is also an equivalence when $G$ is \textit{stably $n$-excisive}. This means that the $n^{th}$ stage of the towers are equivalen: % (concluding the proof of Theorem \ref{thm:tower-eq}) :

% \begin{prop} \label{prop:samepieces}
% For $G$ a stably $n$-excisive  homotopy functor,  then the following is an equivalence
% \[
% \wt_n(p_{n,S^V}): \wt_n (S^\ast G)(V) \ra \wt_n (S^\ast P_nG)(V)
% \]
% \end{prop}

% \todo[inline]{(2015.01.14)Check that we can't extend this to all functors in $\wcal\Top$, using the
% fact that the representable functors are analytic.
% Can describe every functor as a homotopy colimit of repreentables
% in some way.}

% \todo[inline, color=cyan!40]{(2015.04.16)Discussed with Goodwillie (see relevant file). Yes, representable functors are analytic. Going from that to small/finitary functors is where things get murky}

%%%%%%%%%%%%%%%%%%%%%%%%%%%%%%%%%%%%%%%%%%
\subsection{Homogeneous functors and restriction}\label{sec:homog}
We begin by considering $n$-homogeneous functors and use 
the classification results of Goodwillie and Weiss to conclude that the restriction
of an $n$-homogeneous functor in $\wcal \Top$ is $n$-homogeneous in $\jcal_0 \Top$. 

\begin{proposition}\label{prop:restrictionhomog}
If a functor $F$ in $\wcal \Top$ is $n$-homogeneous in the sense of Goodwillie,
then $S^* F \in \jcal_0 \Top$ is $n$-homogeneous in the sense of
Weiss. 
\end{proposition}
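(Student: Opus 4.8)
The plan is to use the classification theorems for homogeneous functors (Theorem \ref{thm:classifications}) together with the explicit formulas they provide, and simply compute what the restriction functor $S^*$ does to the canonical form of an $n$-homogeneous functor. Since $F$ is $n$-homogeneous in the sense of Goodwillie, Theorem \ref{thm:classifications} tells us that there is a spectrum $\Theta_F$ with $\Sigma_n$-action such that $F$ is equivalent (in $\ho(\wcal \Top)$) to the functor
\[
X \mapsto \Omega^\infty \big( (\Theta_F \smashprod X^{\smashprod n})/h \Sigma_n \big).
\]
Applying $S^*$ means substituting $X = S^V$. The key identification is that $(S^V)^{\smashprod n} \cong S^{V^{\oplus n}} = S^{\rr^n \otimes V}$, and that this identification is compatible with the $\Sigma_n$-action on the left and the action of $\Sigma_n \subset O(n)$ on $\rr^n \otimes V$ on the right. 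Hence $S^* F$ is objectwise equivalent to
\[
V \mapsto \Omega^\infty \big( (\Theta_F \smashprod S^{\rr^n \otimes V})/h \Sigma_n \big).
\]

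The next step is to recognise this as (equivalent to) an $n$-homogeneous functor in the sense of Weiss. The Weiss classification produces, from a spectrum $\Psi$ with $O(n)$-action, the functor $V \mapsto \Omega^\infty \big( (\Psi \smashprod S^{\rr^n \otimes V})/hO(n) \big)$. So I would take $\Psi_E$ to be the spectrum obtained by inducing the $\Sigma_n$-action on $\Theta_F$ up to an $O(n)$-action, i.e. $\Psi_E = O(n)_+ \smashprod_{\Sigma_n} \Theta_F$ (an appropriately derived/homotopy version of induction along $\Sigma_n \hookrightarrow O(n)$). The point is the standard adjunction/Shapiro-lemma style identity: for an $O(n)$-spectrum of the form $O(n)_+ \smashprod_{\Sigma_n}^{\mathbf{L}} \Theta$, one has a natural equivalence
\[
\big( (O(n)_+ \smashprod_{\Sigma_n}^{\mathbf{L}} \Theta) \smashprod S^{\rr^n \otimes V} \big)/hO(n) \;\simeq\; \big( \Theta \smashprod S^{\rr^n \otimes V} \big)/h\Sigma_n,
\]
since $S^{\rr^n\otimes V}$ already carries a compatible $O(n)$-action restricting to the $\Sigma_n$-action, and homotopy orbits commute with this base change. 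Feeding this into the Weiss formula exhibits $S^* F$ as equivalent to a functor of the form classified by Theorem \ref{thm:classifications} as $n$-homogeneous in the sense of Weiss; since the notion of $n$-homogeneity is invariant under objectwise weak equivalence (both $\wt_n$ and $\wt_{n-1}$ preserve weak equivalences), we conclude $S^*F$ is $n$-homogeneous in the sense of Weiss.

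The main obstacle I anticipate is bookkeeping the equivariance carefully: one must check that the homeomorphism $(S^V)^{\smashprod n} \cong S^{\rr^n \otimes V}$ genuinely intertwines the $\Sigma_n$ permutation action on smash powers with the action of the subgroup $\Sigma_n \subset O(n)$ of permutation matrices on $\rr^n$, and that this is natural in $V$ as $V$ ranges over $\jcal_0$ (isometries between inner product spaces), so that everything assembles to an honest map of $\jcal_0 \Top$-functors rather than just a levelwise statement. A secondary, more technical point is to make the homotopy-orbit base-change equivalence precise at the model-category level (using cofibrant replacements so that $\smashprod_{\Sigma_n}$ and $\smashprod_{O(n)}$ compute homotopy orbits), but this is routine given the machinery of \cite{barnesoman13} and \cite{barneseldred} and can be cited rather than reproved. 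An alternative, if one wishes to avoid the explicit induction-of-spectra computation, is to argue more softly: $S^*$ evidently sends objectwise weak equivalences to objectwise weak equivalences, and one can instead verify directly that $S^*F$ is $n$-polynomial (which, for the homogeneous case, will follow once we also know Proposition \ref{prop:nexc-north}) and that $\wt_{n-1}(S^* F)$ is objectwise contractible — the latter because $\wt_{n-1}$ of the restriction will be computed by the $(n-1)$st Weiss approximation of a functor built from $\Theta_F$, which has trivial $(n-1)$st layer. I expect the explicit spectrum-level computation to be the cleanest, so that is the route I would write up.
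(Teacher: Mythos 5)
Your proposal is correct and follows essentially the same route as the paper: both pass through the classification theorems, form the induced $O(n)$-spectrum $O(n)_+ \smashprod_{\Sigma_n}^{L} \Theta_F$, and use the untwisting/Shapiro-type identification of $O(n)$-homotopy orbits of the induced spectrum with $\Sigma_n$-homotopy orbits of $\Theta_F \smashprod S^{\rr^n \otimes V}$. The paper merely makes the base-change step explicit, via the isomorphism $[g,x,y] \mapsto [g,x,g^{-1}y]$ and the observation that smashing with $(E\Sigma_n)_+$ makes the $O(n)$-action free so that homotopy orbits agree with strict orbits.
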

\begin{proof}
Let $F$ be an $n$-homogeneous functor %(in the sense of Goodwillie)
in $\wcal \Top$.
Then, by Theorem \ref{thm:classifications} and Theorem \ref{thm:goodclass},
from $F$ we can obtain $\Theta_F$, a spectrum with an action of $\Sigma_n$ such that
\[
F(A) \simeq
\Omega^\infty (A^{\smashprod n} \smashprod \Theta_F)/{h \Sigma_n}
\quad, \forall A \in \wcal
\]
Using the (derived) change of groups functor, we can construct a
spectrum with an action of $O(n)$ from $\Theta_F$:
\[
O(n)_+ \smashprod_{\Sigma_n}^L \Theta_F = 
O(n)_+ \smashprod_{\Sigma_n} ((E \Sigma_n)_+ \smashprod \Theta_F)
\]
Using Theorem \ref{thm:classifications}, we obtain an $n$-homogeneous functor
%(in the sense of Weiss) 
$F'$ in $\jcal_0 \Top$
\[
F'(V) = \Omega^\infty (S^{nV} \smashprod O(n)_+ \smashprod_{\Sigma_n}
((E \Sigma_n)_+ \smashprod \Theta_F))/h O(n).
\]
%Recall that 
If $X$ is a space with $\Sigma_n$-action and $Y$ is a space
with $O(n)$-action, we have an isomorphism of $O(n)$-spaces
$(O(n)_+ \smashprod_{\Sigma_n} X) \smashprod Y \to
O(n)_+ \smashprod_{\Sigma_n} (X \smashprod i^* Y)$
given by $[g,x,y] \mapsto [g,x,g^{-1} y]$.
This isomorphism extends to the level of spectra, to give an isomorphism 
\[
F'(V) \cong \Omega^\infty (O(n)_+ \smashprod_{\Sigma_n} 
(i^* S^{nV} \smashprod ((E \Sigma_n)_+ \smashprod \Theta_F))/h O(n).
\]
The action of $O(n)$ on $(O(n)_+ \smashprod_{\Sigma_n} 
(i^* S^{nV} \smashprod ((E \Sigma_n)_+ \smashprod \Theta_F))$
is free: the only fixed point of each level of the spectrum is the basepoint, 
%This follows 
since $(E \Sigma_n)_+$ is a free $\Sigma_n$-space. 
Thus, taking $O(n)$-homotopy orbits is the same as taking strict orbits.
We hence have a series of isomorphisms as below.
\[
\begin{array}{rcl}
F'(V) & \cong & \Omega^\infty (O(n)_+ \smashprod_{\Sigma_n} 
(i^* S^{nV} \smashprod ((E \Sigma_n)_+ \smashprod \Theta_F)))/ O(n) \\
& \cong &
\Omega^\infty (i^* S^{nV} \smashprod ((E \Sigma_n)_+ \smashprod \Theta_F))/ \Sigma_n \\
& \cong &
\Omega^\infty (i^* S^{nV} \smashprod \Theta_F)/ h \Sigma_n \\
& \simeq & F(S^V)
\end{array}
\]
We have shown that $S^* F$ is objectwise weakly equivalent to $F'$ and hence it is an $n$-homogeneous functor in the sense of Weiss. 
\end{proof}

% \begin{figure}[H]
% \[
% \xymatrix@C+2cm{
% \ho (\Sigma_n \lca \Sp)
% \ar[r]^{O(n)_+ \smashprod_{\Sigma_n}^L -}
% &
% \ho (O(n) \lca  \Sp)
% \ar[d]
% \\
% \nhomog \ \ho (\wcal \Top)
% \ar[u]
% &
% \nhomog \ \ho (\jcal_0 \Top)
% }
% \]
% \caption{The homotopy level diagram}
% \label{fig:homotopylevel}
% \end{figure}

% \begin{lemma}
%   The composite of the functors of Figure \ref{fig:homotopylevel}
%   is given by applying $S^*$ to an $n$-homogeneous object of
%   $\ho (\wcal \Top)$.
% \end{lemma}

% Now we show that this composite functor is given by applying the functor $S^*F$. On a vector space $V$ this takes form
% $F(S^V)$, which is simply
% \[
% \Omega^\infty ((S^V)^{\smashprod n} \smashprod E)/h \Sigma_n
% \]

% \todo[inline, color=green]{Either remove
% $\mathbb{U}$ or argue that it does not affect the right hand side as
% it commutes with the $\Omega^\infty$ functor.}

Let $\ho (\wcal \Top)$ be the 
homotopy category of $\wcal \Top$, where a weak equivalence
is a objectwise weak equivalence (see Proposition \ref{prop:levelwisemodel}).
We then define $\nhomog \ho (\wcal \Top)$ to be the full subcategory of 
$\ho \wcal \Top$ with objects the $n$-homogeneous functors; we define $\nhomog  \ho (\jcal_0 \Top)$ analogously. 
For $\gcal$ a group, let $\ho (\gcal \lca \Sp)$ be the homotopy category of spectra
with an action of $\gcal$.

Then the previous result shows that the derived functor $RS^*$ of $S^*$ 
is well defined on the categories of $n$-homogeneous functors.
Futhermore, it shows that Figure \ref{fig:completehomotopylevel} is commutative, 
where the vertical arrows are from the classification statements of Weiss and Goodwillie.

\begin{figure}[H]
\[
\xymatrix@C+2cm{
\ho (\Sigma_n \lca \Sp)
\ar[d]
\ar[r]^{O(n)_+ \smashprod_{\Sigma_n}^L -}
&
\ho (O(n) \lca  \Sp)
\ar[d]
\\
\nhomog \ho (\wcal \Top)
\ar[r]^{R S^*}
&
\nhomog \ho (\jcal_0 \Top)
}
\]
\caption{The homotopy level diagram}
\label{fig:completehomotopylevel}
\end{figure}

% \verb!##########################################!

% Previous version of diagram.
% \[
% \xymatrix@C+2cm{
% Ho (\Sigma_n \lca \wcal \Sp)
% \ar[r]^{O(n)_+ \smashprod_{\Sigma_n}^L - \circ \mathbb{U}}
% \ar[d]
% &
% Ho (O(n) \lca \ical \Sp)
% \ar[d]
% \\
% Ho (\wcal \Top_{\nhomog})
% \ar[r]^{\mathbb{R} S^*}
% &
% Ho (\jcal_0 \Top_{\nhomog})
% }
% \]

% \verb!##########################################!

% Hence we can now prove that an $n$-homogeneous functor $F$ in $\wcal \Top$
% defines an $n$-homogeneous functor $S^*F$ in $\jcal_0 \Top$.

% Take some $n$-homogeneous functor $F \in \wcal \Top$. Then there is an object $E \in \Sigma_n \lca \wcal \Sp$, which is unique up to homotopy,
% such that the corresponding $n$-homogeneous functor is $F$.
% The arguments above show that $S^*F$ is given by the formula
% \[
% V \mapsto \Omega^\infty ((S^V)^{\smashprod n} \smashprod E)/h \Sigma_n
% \cong
% \Omega^\infty (S^{nV} \smashprod (O(n)_+ \smashprod_{\Sigma_n}^L \mathbb{U} E))/h O(n)
% \]
% By the classification theorem of Weiss, it follows that $S^* F$ is $n$-homogeneous.

%%%%%%%%%%%%%%%%%%%%%%%%%%%%%%%%%%%%%%%%%%
%\subsubsection{Step 2: Induction}
 %%%%%%%%%%%%%%%%%%%%%%%%%%%%%%%%%%%%%%%%%%
\subsection{Restriction of excisive functors are polynomial}\label{sec:nexsnpoly}
In this section, we prove that the restriction of an $n$-excisive functor is $n$-polynomial. 

\begin{prop}\label{prop:nexc-north}
For $F$ a homotopy functor, the map $w_n: S^\ast (P_nF) \ra \wt_n (S^\ast (P_n F) )$ is an objectwise weak equivalence. Equivalently, restriction to $\jcal_0$ of an $n$-excisive functor yields an $n$-polynomial functor.
\end{prop}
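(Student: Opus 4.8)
The plan is to prove that $S^\ast(P_n F)$ is $n$-polynomial by reducing to the homogeneous case already handled in Proposition~\ref{prop:restrictionhomog}, using induction on $n$ and the fibre sequences defining the layers of the tower. The base case $n=0$ is immediate: $P_0 F$ is constant, so $S^\ast(P_0 F)$ is constant, hence $0$-polynomial. For the inductive step, recall the Goodwillie fibration sequence
\[
D_n^G F \lra P_n F \lra P_{n-1} F,
\]
and apply $S^\ast$. Since $S^\ast$ is given by precomposition with $V \mapsto S^V$, it preserves objectwise fibration sequences (homotopy fibres are computed objectwise in both $\wcal\Top$ and $\jcal_0\Top$), so we get a fibration sequence
\[
S^\ast(D_n^G F) \lra S^\ast(P_n F) \lra S^\ast(P_{n-1}F)
\]
in $\jcal_0\Top$. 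By the inductive hypothesis $S^\ast(P_{n-1}F)$ is $(n-1)$-polynomial, hence $n$-polynomial by Proposition~\ref{prop:n-is-n+1}. By Proposition~\ref{prop:restrictionhomog}, $S^\ast(D_n^G F)$ is $n$-homogeneous in the sense of Weiss, in particular $n$-polynomial.

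The remaining point is that $n$-polynomial functors in $\jcal_0\Top$ are closed under (objectwise) homotopy fibre sequences in the relevant sense: if the base and fibre of an objectwise fibration sequence are $n$-polynomial, so is the total object. This follows because $\tau_n$ (being a homotopy limit) commutes with homotopy fibres, so $\rho^n_E$ is a natural transformation of fibration sequences; if it is a weak equivalence on fibre and base, the five lemma (or the long exact sequence of homotopy groups, together with basepoint-component bookkeeping) forces it to be a weak equivalence on the total object. Applying this to the fibration sequence above yields that $S^\ast(P_n F)$ is $n$-polynomial, completing the induction. The ``equivalently'' clause is then the statement that if $F$ is itself $n$-excisive then $F \simeq P_n F$ (objectwise), so $S^\ast F \simeq S^\ast(P_n F)$ is $n$-polynomial, and $n$-polynomiality is invariant under objectwise weak equivalence.

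The main obstacle I anticipate is not conceptual but bookkeeping: verifying that $S^\ast$ genuinely lands the Goodwillie layers in the Weiss homogeneous functors requires knowing $S^\ast(D_n^G F)$ is not just abstractly equivalent to \emph{some} $n$-homogeneous Weiss functor but that this identification is compatible with the fibration sequence — i.e. that $S^\ast$ applied to the tower of $P_n F$ really is (levelwise equivalent to) a tower whose layers are Weiss-homogeneous. Proposition~\ref{prop:restrictionhomog} gives the layerwise statement, and since everything in sight is an objectwise homotopy fibre sequence this compatibility is automatic, but one should state it carefully. A secondary subtlety is ensuring the closure property for $n$-polynomial functors under fibre sequences is cited or proved cleanly (it is the orthogonal-calculus analogue of the standard fact that $P_n$ and $\wt_n$ commute with homotopy fibres, already used in the excerpt just after the definition of homogeneous functors).
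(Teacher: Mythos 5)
Your overall strategy (induction on $n$, base case $P_0F=\ast$, reduce the inductive step to Proposition \ref{prop:restrictionhomog} plus a closure property of $n$-polynomial functors) is the same as the paper's, but the closure property you invoke is the wrong one, and it is false as stated. You use the sequence $D_n^GF \to P_nF \to P_{n-1}F$ with $P_nF$ as the \emph{total space}, and claim that $n$-polynomial functors are closed under extensions: fibre and base $n$-polynomial implies total space $n$-polynomial. The five-lemma argument you sketch only controls the components of $P_nF(S^V)$ lying over the basepoint component of $P_{n-1}F(S^V)$, because the homotopy fibre (taken over the basepoint) sees nothing of the other components. This is not mere bookkeeping: if $B$ is the constant functor $S^0$ (which is $0$-polynomial, hence $n$-polynomial) and $E(V)=\ast\sqcup G(V)$ maps to $B$ by sending $G(V)$ to the non-base point, then $\hofib(E\to B)=\ast$ and $B$ are both $n$-polynomial while $E$ is as bad as $G$ is. So the lemma you rely on needs surjectivity/connectivity hypotheses on the base that you have not verified for $P_nF\to P_{n-1}F$, and which in general fail (these spaces are typically disconnected, e.g. for $F=Q$).

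The paper sidesteps this by running the fibre sequence the other way: Goodwillie's delooping \cite[Lemma 2.2]{goodcalc3} exhibits $P_nF$ as the homotopy \emph{fibre} of a map $P_{n-1}F \xrightarrow{\rho} R_nF$ with $R_nF$ $n$-homogeneous. Applying $S^\ast$ (which commutes with objectwise homotopy fibres), $S^\ast(P_nF)$ is the homotopy fibre of $S^\ast(P_{n-1}F)\to S^\ast(R_nF)$, a map between $n$-polynomial functors (by the inductive hypothesis together with Proposition \ref{prop:n-is-n+1}, and by Proposition \ref{prop:restrictionhomog} respectively). The closure property actually needed is then the unproblematic one — the homotopy fibre of a map of $n$-polynomial functors is $n$-polynomial, which is \cite[Lemma 5.5]{weiss95} and follows cleanly from $\tau_n$ being a homotopy limit. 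To repair your argument, replace your extension-closure step with this delooping; the rest of your proof, including the base case and the ``equivalently'' clause, is fine.
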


\begin{proof}
We work by induction. If $n=0$ then $S^* P_0 F = \ast$, which is $0$-polynomial
in $\jcal_0 \Top$. 
Now we assume the result holds for $n-1$.
By \cite[Lemma 2.2]{goodcalc3}, there is a fibre sequence with 
$R_n F \in \wcal \Top$ an $n$-homogeneous in the sense of Goodwillie:
\[
P_n F \lra P_{n-1}F  \overset{\rho}{\lra} R_n F
\]
By Proposition \ref{prop:restrictionhomog}, we know  $S^* R_n F$ is $n$-homogeneous
in the orthogonal calculus.  
It follows that $\ind_0^{n+1} R_n F(S^V)$ is trivial.
By inductive hypothesis, 
$S^* P_{n-1} F$ is $(n-1)$-polynomial, and thus 
$n$-polynomial (by Proposition \ref{prop:n-is-n+1}). %\todo[color=cyan!20,author=Rosona]{ref result}.
Hence, we may apply \cite[Lemma 5.5]{weiss95} to 
\[
S^* \rho \co S^* (P_{n-1} F) {\lra} S^* (R_n F)
\]
to conclude that its homotopy fibre is $n$-polynomial in the orthogonal calculus.
Since homotopy fibres are constructed objectwise, the homotopy fibre is 
$S^* (P_n F)$.
\end{proof}

%\todo[author=D]{Redid the remark a bit}
\begin{remark}
Proposition \ref{prop:nexc-north} implies that if $F$ satisfies
$F(X) \overset{\simeq}{\lra} \holim_{k \in \mathscr{P}_0([n])} F(k \join X)$
for all $X \in \wcal$, then $S^\ast F$ satisfies
$F(S^V) \overset{\simeq}{\lra} \holim_{0 \neq U \subset \R^{n+1}} F(S^{V \oplus U})$ for all $V \in \jcal_0$.
This can be thought of as a kind of enriched cofinality.
\end{remark}

% \begin{remark}
% One might try to establish the result above via a kind of enriched cofinality. Given that we already have the above, we then get the cofinality statement as a corollary. That is, Proposition \ref{prop:nexc-north} implies that if $F$ satisfies
% %\[
% $F(X) \overset{\simeq}{\ra} \holim_{k \in \mathscr{P}_0([n])} F(k \join X)$
% %\]
% for all $X \in \wcal$, then $S^\ast F$ satisfies
% %\[
% $
% F(S^V) \overset{\simeq}{\ra} \holim_{0 \neq U \subset \R^{n+1}} F(S^{V \oplus U})
% $ for all $V \in \jcal_0$.
% %\]
% \end{remark}

 %%%%%%%%%%%%%%%%%%%%%%%%%%%%%%%%%%%%%%%%%%
\subsection{Agreement of towers for restricted analytic functors}\label{sec:stablyexs}

It is observed in \cite{goodcalc3} that when $F$ is stably n-excisive, $P_nF$ is, up to natural equivalence, the only $n$-excisive functor that agrees to $n^{th}$ order with $F$. The uniqueness of $P_n F$ for these functors implies more than just $n$-polynomialness of $S^\ast (\P_n F)$ -- we can also say that $S^\ast (\P_n F)$ is the $n$-polynomial approximation of $S^\ast F$.  If we assume the stronger property of $\rho$-analyticity for $F$ (which implies that $F$ is stably $n$-excisive for all $n$), then we can conclude that the tower
$\{ \wt_n (S^* F)  \}_{n \geqslant 0}$
is equivalent to the tower
$\{ S^* (P_n F)  \}_{n \geqslant 0}$.

%$In this section we show that the restriction functor $S^*$ behaves
%well on stably $n$-excisive (and, moreover, $\rho$-analytic) functors. See section \ref{sec:connanalyt} for precise definitions. Vaguely, we say that $F$ is stably $n$-excisive if its failure to be $n$-excisive is bounded (this is expressed in terms of connectivity of a certain map) and it is $(\rho$-)analytic if it is stably $n$-excisive for all $n$. 

%\todo[author=D, inline]{put square into proof, redid intro slightly.
%Also reworded the statement of the following result a bit}

% We now make rigorous the previous comments.

% We show that the bottom map in the square is also an equivalence when $F$ is \textit{stably $n$-excisive}. 

\begin{prop} \label{prop:samepieces}
If $F$ is a stably $n$-excisive  homotopy functor, 
the following map is a weak homotopy equivalence for all $V \in \jcal_0$.
\[
\wt_n(S^* p_{n})(V) \co \wt_n (S^\ast F)(V) \lra \wt_n (S^\ast P_nF)(V)
\]
Thus the $n$-polynomial approximation of $S*F$ is 
given by the map $S^*F \to S^*(P_n F)$. 
\end{prop}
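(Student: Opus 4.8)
The plan is to combine the two agreement lemmas already developed in the excerpt. The key observation is that since $F$ is stably $n$-excisive, Lemma \ref{lem:ordnorthordn} tells us that the map $p_{n, S^V} \co S^* F \to S^* (P_n F)$ is an orth-order-$n$ agreement (in the sense of Definition \ref{def:orthordn}): the connectivity bound $(-c + (n+1)\mathrm{conn}(S^V))$ translates directly into the required bound $(-b + (n+1)\dim(V))$ with $b = c - (n+1)$. Then Proposition \ref{lem:ordnWneq} applies verbatim: an orth-order-$n$ agreement induces an objectwise weak equivalence after applying $\wt_n$. So the displayed map $\wt_n(S^* p_n)$ is an objectwise weak equivalence, which is the first assertion.

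For the second assertion, I would invoke Proposition \ref{prop:nexc-north}: since $P_n F$ is $n$-excisive (this is part of \cite[Proposition 1.5]{goodcalc3}, using that $F$ is stably $n$-excisive), its restriction $S^*(P_n F)$ is $n$-polynomial. Hence the natural map $S^*(P_n F) \to \wt_n(S^*(P_n F))$ is already an objectwise weak equivalence. Stringing this together with the weak equivalence of the first part, we get a zig-zag
\[
S^* F \lra S^*(P_n F) \overset{\simeq}{\lra} \wt_n(S^*(P_n F)) \overset{\simeq}{\longleftarrow} \wt_n(S^* F),
\]
and commutativity of the naturality square for $\wt_n$ identifies the composite $S^* F \to \wt_n(S^* F)$ as the standard polynomial-approximation map. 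This exhibits $S^*(P_n F)$, together with the map $S^* F \to S^*(P_n F)$, as (a model for) the $n$-polynomial approximation of $S^* F$.

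The main obstacle is purely bookkeeping: making sure the connectivity-versus-dimension translation in the first step is done with the correct constants and that the naturality square for $\wt_n$ (applied to $S^* p_n$) genuinely commutes on the nose, so that the zig-zag collapses to the claimed statement about $S^* F \to S^*(P_n F)$ being \emph{the} $n$-polynomial approximation rather than merely \emph{an} $n$-polynomial functor receiving a map. Everything else is an immediate citation of Lemma \ref{lem:ordnorthordn}, Proposition \ref{lem:ordnWneq}, and Proposition \ref{prop:nexc-north}.
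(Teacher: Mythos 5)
Your proposal is correct and follows essentially the same route as the paper: Lemma \ref{lem:ordnorthordn} plus Proposition \ref{lem:ordnWneq} for the first claim, and Proposition \ref{prop:nexc-north} together with the naturality square for $\wt_n$ applied to $S^*p_n$ for the identification of the $n$-polynomial approximation. The only discrepancy is the sign bookkeeping in the constant $b$ (the correct value is $b = c + (n+1)$, since $(n+1)\mathrm{conn}(S^V) = (n+1)\dim(V) - (n+1)$), but as you note, only the existence of some such constant matters.
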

\begin{proof}
For any $F \in \wcal \Top$ there is a commutative square as below. 
Proposition \ref{prop:nexc-north} implies that the right hand vertical map is a weak equivalence. 
\[
\xymatrix{
(S^\ast F)(V) \ar[rr]^{p_{n,S^V}}\ar[d] & &S^\ast(P_n F)(V) \ar[d]^\simeq\\
\wt_n (S^\ast F)(V) \ar[rr]^{\wt_n(p_{n,S^V})} && \wt_n S^\ast(P_nF)(V)
}
\]
Assume that $F \in \wcal \Top$ is stably $n$-excisive, 
then by \cite[Proposition 1.5]{goodcalc3}, the map $p_n: F \ra P_n F$
is an agreement to order $n$ (Definition \ref{def:ordern}). 
By Lemma \ref{lem:ordnorthordn}, we also know that the restriction to $\jcal_0$
\[
S^* p_{n} \co S^* F \lra S^*(P_nF)
\]
is an orth-order-$n$ agreement for all $V$ (Definition \ref{def:orthordn}).
Then, applying Proposition \ref{lem:ordnWneq}, we conclude that
\[
\wt_n( S^* p_{n}) \co \wt_n (S^* F) \lra \wt_n (S^*(P_n F)) 
\]
is an objectwise equivalence. The commutative square then shows that 
the $n$-polynomial approximation to $S^*F$, $S^* F \to \wt_n S^*F$, 
is weakly equivalent to $S^*F \to S^*(P_n F)$. 
\end{proof}

To extend to an equivalence of towers, not just of the $n^{th}$ level for a fixed $n$, we consider the stronger notion of $\rho$-analytic. 
%\todo[inline, color=cyan!20,author=Rosona]{(20.04.2015) Need a sentence like: "Since we want to show the whole towers agree, we expand from stably $n$-excisive to $k$-analytic for some $k$, as that implies stably $n$-excisive for all $n$"}

\begin{theorem}\label{thm:tower-eq}
Consider $F$ a $\rho$-analytic homotopy functor for some $\rho$ and $S^\ast F$ its restriction to $\jcal_0$. Then  the Weiss tower of $S^\ast F$ is equivalent to the restriction to $\jcal_0$ of the Goodwillie tower of $F$.
\end{theorem}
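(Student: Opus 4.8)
The plan is to assemble the tower equivalence from the level-wise statements already established, using $\rho$-analyticity to control the behavior as $n \to \infty$ and to upgrade ``equivalence for each fixed $n$'' into ``equivalence of towers.'' The key input is Proposition \ref{prop:samepieces}: since $F$ is $\rho$-analytic, it is stably $n$-excisive for every $n$ (Remark \ref{rem:analytic-stab}), so for each $n$ the natural map $\wt_n(S^* F) \to \wt_n(S^* P_n F)$ is an objectwise weak equivalence, and moreover the $n$-polynomial approximation of $S^* F$ is modeled by $S^* F \to S^*(P_n F)$. In other words, for each $n$ there is a weak equivalence $\wt_n(S^* F) \simeq S^*(P_n F)$.

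First I would check that these level-wise equivalences are compatible with the structure maps of the two towers. The Goodwillie tower of $F$ has maps $P_n F \to P_{n-1} F$; applying $S^*$ (which commutes with the relevant homotopy limits and fibres, being defined by precomposition) gives the tower $\{S^*(P_n F)\}_n$ with its maps. On the other side, the Weiss tower $\{\wt_n(S^* F)\}_n$ has its canonical maps $\wt_n(S^* F) \to \wt_{n-1}(S^* F)$ from Proposition \ref{prop:n-is-n+1}. The point is that the equivalences of Proposition \ref{prop:samepieces} are induced by the natural transformation $p_n \colon F \to P_n F$ and the functoriality of $\wt_n$, so they fit into a ladder of squares commuting with the tower maps (up to homotopy); this is really just naturality of $\wt_n$ applied to the commuting triangles $F \to P_n F \to P_{n-1} F$. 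Hence the two towers are objectwise levelwise weakly equivalent as towers, which is the assertion of the theorem.

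The main obstacle I expect is bookkeeping the coherence of these equivalences rather than any deep new idea: one must be careful that the identification $\wt_n(S^* F) \simeq S^*(P_n F)$ is genuinely natural in $n$ so that it passes to the inverse-limit towers, and that $S^*$ really does interact correctly with $\wt_n$ versus $P_n$ (the content of Propositions \ref{prop:restrictionhomog}, \ref{prop:nexc-north}, and \ref{prop:samepieces}). In particular one should note that $S^*(P_n F)$ is already $n$-polynomial (Proposition \ref{prop:nexc-north}), so it \emph{is} its own $n$-polynomial approximation, which is what lets Proposition \ref{prop:samepieces} conclude $\wt_n(S^* F)(V) \simeq S^*(P_n F)(V)$ rather than merely a zig-zag. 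Assembling over all $n$ then yields the stated equivalence of towers; no convergence hypothesis beyond $\rho$-analyticity is needed for the tower comparison itself, since we are only comparing the finite stages.

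I would conclude by remarking that one could optionally record the induced equivalence on inverse limits, $\wt_\infty(S^* F) \simeq S^*(P_\infty F)$, but that this is a formal consequence of the level-wise tower equivalence and is not strictly part of the statement. The essential content is: $\rho$-analyticity $\Rightarrow$ stably $n$-excisive for all $n$ $\Rightarrow$ Proposition \ref{prop:samepieces} applies for all $n$ $\Rightarrow$ the towers agree stage by stage, naturally in $n$.
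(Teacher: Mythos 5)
Your proposal is correct and follows essentially the same route as the paper: invoke Remark \ref{rem:analytic-stab} to get stable $n$-excisiveness for all $n$, then combine Propositions \ref{prop:nexc-north} and \ref{prop:samepieces} into a ladder of levelwise weak equivalences (through the intermediate term $\wt_n(S^*(P_n F))$) that commutes with the tower maps by naturality of $\wt_n$ applied to $F \to P_n F \to P_{n-1}F$. The only content of the paper's proof you omit is the explicit identification of the layers, namely that $S^*(D_n^G F) \simeq D_n^W S^* F$ with classifying spectrum $O(n)_+ \smashprod_{\Sigma_n}^L \Theta_F$, which the paper records as a consequence but which is not strictly required for the tower statement itself.
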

\begin{proof}
Since $\rho$-analytic functors are stably $n$-excisive for all $n$ 
(Remark \ref{rem:analytic-stab}), we have a commutative diagram
as below. Moreover,  the horizontal maps are objectwise weak equivalences
by Propositions \ref{prop:nexc-north} and \ref{prop:samepieces}.
\[
\xymatrix@C+0.8cm{
S^\ast (P_nF) 
\ar[r]^-{w_n}_-{\simeq \text{ all }F} 
\ar[d]
& 
\wt_n (S^\ast (P_n F) )
\ar[d]
&&
\ar[ll]_-{\wt_n(S^* p_{n})}^-{\simeq { \text{ stably $n$-exc }F}}
\wt_n (S^\ast F) 
\ar[d]
\\
S^\ast (P_{n-1}F) 
\ar[r]^-{w_{n-1}}_-{\simeq \text{ all }F}  & 
\wt_{n-1} (S^\ast (P_{n-1} F) )
&&
\ar[ll]_-{\wt_n(S^* p_{{n-1}})}^-{\simeq {\text{ stably $(n-1)$-exc }F}}
\wt_{n-1} (S^\ast F)
}
\]
It follows that $S^*(D_n^G F)$ is objectwise weakly equivalent to 
$D_n^W S^* F$. 
Furthermore, by Proposition \ref{prop:restrictionhomog}, if $\Theta_F$ is the spectrum 
with $\Sigma_n$-action that corresponds to $D_n^G F$ (by Goodwillie's classification, 
see Theorem \ref{thm:classifications}), then 
$O(n)_+ \smashprod_{\Sigma_n}^L \Theta_F$ is the spectrum corresponding to 
$D_n^W S^* F$ (by Weiss's classification).
\end{proof}

%\todo[color=magenta, author=Rosona, inline]{(19.05.2015) Killed the instance of the diagram that was here (redundat, already appeared in the proof) and imported the comments on the arrows to the previous instance of it}

% For a general $F \in \wcal \Top$, we have a commutative diagram 
% as below, with weak equivalences as indicated. 
% \[
% \xymatrix@C+0.5cm{
% \vdots \ar[d]
% &\vdots \ar[d]                                                              &&\vdots \ar[d] \\
% S^\ast(\P_nF) 
% \ar[d]
% \ar[r]^-{\simeq}_-{\text{all }F} & 
% \wt_n (S^\ast (\P_n F))  
% \ar[d]& & 
% \wt_n (S^\ast F) \ar[ll]_{\simeq}^{{\footnotesize \text{stably $n$-exc }F}}
% \ar[d]\\
% S^\ast(\P_{n-1}F) 
% \ar[d]\ar[r]^-{\simeq}_-{\text{all }F} & 
% \wt_{n-1} (S^\ast (\P_{n-1} F)) 
% \ar[d] && 
% \wt_{n-1} (S^\ast F)
% \ar[ll]_{\simeq}^-{{\footnotesize\text{stably ($n$-1)-exc F}}} 
% \ar[d]\\
% \vdots  &\vdots &&\vdots \\
% }
% \]

While $S^\ast P_n F$ is $n$-polynomial, in general there is no reason for it to be the $n$-polynomial approximation to $S^*F$. 
Consequently, for non-analytic functors $F$, the two towers $\{S^\ast P_n F\}_{n \geq 0}$ and $\{\wt_n (S^\ast F)\}_{n \geq 0}$ do not need to be equivalent under $S^*$.

\begin{corollary}\label{cor:GSVanlytic}
If $F\in \wcal \Top$ is $\rho$-analytic, $S^\ast F$ agrees to orth-order-$n$ with $\wt_n (S^\ast F)$ for all $n$. Moreover, $S^\ast F$ is weakly $(\rho+1)$-analytic. That is, for $V$ at least dimension $(\rho+1)$, we have that $S^\ast F (V) \overset{\simeq}{\lra}\wt_\infty (S^\ast F)(V)=  \hocolim_n (\wt_n S^* F)(V)$.  %is orthogonally $k$-analytic
\end{corollary}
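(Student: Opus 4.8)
The plan is to bootstrap from the already-established tower comparison (Theorem \ref{thm:tower-eq}) together with the weak $\rho$-analyticity of $F$ (Definition \ref{def:weakanalytic}), which holds since $\rho$-analytic functors are weakly $\rho$-analytic. First I would record the orth-order-$n$ agreement: by Lemma \ref{lem:ordnorthordn} applied with $F$ stably $n$-excisive (which follows from $\rho$-analyticity via Remark \ref{rem:analytic-stab}), the map $p_{n,S^V}\co S^\ast F \to S^\ast(P_n F)$ is an orth-order-$n$ agreement; then Proposition \ref{prop:samepieces} identifies $\wt_n(S^\ast F)$ with $S^\ast(P_n F)$, so $S^\ast F \to \wt_n(S^\ast F)$ is an orth-order-$n$ agreement between $S^\ast F$ and $\wt_n(S^\ast F)$. (Alternatively, one reads the agreement straight off the connectivity estimate $(-c - (n+1) + (n+1)\dim V)$ computed in the proof of Lemma \ref{lem:ordnorthordn}.)

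Next I would pass to the limit over $n$. Fix $V$ with $\dim V \geqslant \rho + 1$, so that $S^V$ has connectivity $\dim V - 1 \geqslant \rho$. Since $F$ is weakly $\rho$-analytic, $F(S^V) \overset{\simeq}{\lra} P_\infty F(S^V) = \holim_n P_n F(S^V)$. By Theorem \ref{thm:tower-eq}, the Weiss tower $\{\wt_n(S^\ast F)(V)\}_n$ is equivalent to the tower $\{S^\ast(P_n F)(V)\}_n = \{P_n F(S^V)\}_n$, compatibly with the structure maps; hence taking homotopy limits gives $\wt_\infty(S^\ast F)(V) = \holim_n \wt_n(S^\ast F)(V) \simeq \holim_n P_n F(S^V) = P_\infty F(S^V)$. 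Composing, $S^\ast F(V) = F(S^V) \overset{\simeq}{\lra} P_\infty F(S^V) \simeq \wt_\infty(S^\ast F)(V)$, which is exactly weak $(\rho+1)$-analyticity of $S^\ast F$. For the $\hocolim$ description of $\wt_\infty$ I would invoke the definition $\wt_\infty E = \holim_n \wt_n E$ with $\wt_n E = \hocolim_k \tau_n^k E$; since $\wt_n$ is itself built as a homotopy colimit and the approximation maps are as in the standard setup, I would cite the relevant identification from \cite{weiss95} (or simply note $\wt_\infty(S^\ast F)(V) = \hocolim_n(\wt_n S^\ast F)(V)$ follows because the tower is eventually constant on $\pi_\ast$ in the relevant range — i.e. the orth-order-$n$ agreements force the tower to converge, so $\holim$ and $\hocolim$ agree).

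The main obstacle I anticipate is precisely this last point: justifying that $\wt_\infty(S^\ast F)(V)$ can be computed as a homotopy colimit rather than a homotopy limit, and that it agrees with $S^\ast F(V)$ rather than merely with some pro-object. The clean route is to use the orth-order-$n$ agreements from the first paragraph: for fixed $V$, the connectivity of $S^\ast F(V) \to \wt_n(S^\ast F)(V)$ grows linearly in $n$ (roughly $(n+1)\dim V - \text{const}$), so the tower $\{\wt_n(S^\ast F)(V)\}_n$ is Mittag-Leffler with vanishing $\lim^1$ and its homotopy limit receives an equivalence from $S^\ast F(V)$ once $\dim V$ is large enough to make the error terms go to $+\infty$; the same linear-growth estimate makes the structure maps $\wt_n \to \wt_{n-1}$ highly connected on $V$, so the homotopy limit and homotopy colimit of the tower coincide in the range that matters. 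Everything else is formal assembly of results already in the excerpt, so I would keep the write-up short, leaning on Theorem \ref{thm:tower-eq}, Lemma \ref{lem:ordnorthordn}, and weak $\rho$-analyticity, and spend the bulk of the argument on the convergence bookkeeping for a fixed $V$ of dimension $\geqslant \rho+1$.
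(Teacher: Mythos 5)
Your proposal is correct and follows essentially the same route as the paper: combine Propositions \ref{prop:nexc-north} and \ref{prop:samepieces} (equivalently Theorem \ref{thm:tower-eq}) to identify the towers compatibly with the structure maps, pass to homotopy limits, and use (weak) $\rho$-analyticity of $F$ to see that $F(S^V)\to P_\infty F(S^V)$ is an equivalence once $\dim V\geq\rho+1$. The $\hocolim$-versus-$\holim$ issue you flag is not addressed in the paper either — its proof simply works with $\holim_n$ throughout, the $\hocolim_n$ in the statement being best read as a slip — so your extra convergence bookkeeping, while reasonable, is not needed.
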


%\todo[inline, color=cyan!40, author=Rosona]{I need to make this prettier, but here's a first attempt}
\begin{proof}
Since $F$ is $\rho$-analytic, we have weak equivalences for all $n$
\[
S^\ast(\P_nF) 
\overset{\simeq}{\lra}
\wt_n (S^\ast (\P_n F))  
\overset{\simeq}{\longleftarrow}
\wt_n (S^\ast F) 
\]
%$\wt_n (S^\ast F) \sim S^\ast (P_n F)$ for all $n$, 
%and it follows in this case that $\wt_\infty (S^\ast F) \sim S^\ast (P_\infty F)$. Analyticity implies weak analyticity (see Definition \ref{def:weakanalytic}), that is,  for spaces $X$ of connectivity at least $k$, $FX \overset{\simeq}{\ra} P_\infty FX$. 
%$S^V$ has connectivity equal to $\mathrm{dim}(V)-1$. Say that $V$ has at least dimension $(k+1)$, then $S^V$ is in the radius of convergence of $F$, i.e. $F\circ S^V \simeq (P_\infty F)(S^V)\sim (\wt_\infty F\circ S^-)(V)$. That is, $S^\ast F$ is weakly $(k+1)$-analytic. 
%\end{proof}
%\todo[inline, color=yellow, author=Dave]{(24.04.2015) I believe the following shows that there is no problem with the current version of weak analyticity. The diagrams aren't intended for publication, just so we can check my argument works}
%\todo[inline, color=cyan!20, author=R]{(26.04.2015) I like the diagrams. I think we should use them instead of my rambly inline proof, at least until we're told to shorten the paper by 5 pages :P}
%
%For $F \in \wcal \Top$ a $\rho$-analytic functor, 
and we have the
commutative diagram of objects of $\jcal_0 \Top$ as in Figure \ref{fig:towerdiagram}. 
We let $q_n \co P_n \to P_{n-1}$ and $v_n \co \wt_n \to \wt_{n-1}$
be the natural transformations arising from Proposition \ref{prop:n-is-n+1}.
\begin{figure}[H]
\[
\xymatrix{
&& 
\ar[dll]_-{S^* p_n}
S^*F 
\ar[drr]^-{w_n}
&& \\
S^*P_n F 
\ar[rr]_-\simeq^{w_n}
\ar[dd]_-{S^* q_n}
& & 
\ar[dl]_-{\wt_n S^* q_n}
\wt_n S^*P_n F 
\ar[dr]_-{v_n}
&& 
\ar[ll]^-\simeq_-{\wt_n S^* p_n}
\wt_{n} S^* F
\ar[dd]_{v_n}\\
& 
\wt_n S^*P_{n-1} F 
\ar[dr]_-{v_n}
&&  
\wt_{n-1} S^*P_{n} F 
\ar[dl]_-{\wt_{n-1} S^* q_n}
& \\
S^*P_{n-1} F
\ar[ur]^{w_n}
\ar[rr]_{w_{n-1}}^\simeq
&& 
\wt_{n-1} S^*P_{n-1} F 
&& 
\ar[ul]^{\wt_{n-1}S^* p_{n}}
\ar[ll]^{\wt_{n-1}S^* p_{n-1}}_\simeq
\wt_{n-1} S^* F 
}
\]
\caption{Commutativity of the tower maps}
\label{fig:towerdiagram}
\end{figure}

The above figure implies that we have a commutative diagram of homotopy
limits as below, with the maps from $S^\ast F$ are weak equivalences only when $\dim(V) \geq \rho+1$ (so that $S^V$ is in $F$'s radius of convergence).  
\[
\xymatrix@C+0.5cm{
& \ar[dl]_-\simeq^-{S^*p} S^*F \ar[dr]^-\simeq_-w & \\
S^* \holim_n P_n F \ar[d]^= \ar[r]^-\simeq & 
\holim_n \wt_n S^* \holim_n P_n F \ar[d]^= & 
\holim_n \wt_n S^* F \ar[l]_-\simeq \ar[d]^= \\
S^* P_\infty F \ar[r]^\simeq& 
\wt_\infty S^* P_\infty F & 
\wt_\infty S^* F \ar[l]_\simeq
}
\]
In particular, $S^*F(V) \overset{\simeq}{\lra} \wt_\infty S^\ast F(V)$ for all $V$ of dimension larger than $\rho$, as desired. 
\end{proof}

%\todo[inline, color=cyan!20, author=R]{I'm going to re-do this diagram}
%F is assumed to be $\rho$-analytic. For each $V$ such that $S^V$ is in the radius of connectivity of $F$ (i.e. with dim$(V) \geq \rho+1$), we have that $S^\ast F(V) \overset{\simeq}{\lra} S^\ast P_\infty F(V)$. $\wt_\infty$ must preserve this, so we also get that 
%\[
%\wt_\infty S^\ast F(V) \overset{\simeq}{\lra} \wt_\infty S^\ast P_\infty F(V)
%\]

\begin{ex}\label{ex:SVanalytic}
The identity functor $Id_{\Top} \in \wcal\Top$ is 1-analytic. Therefore, $V \mapsto S^V$ (which is equal to $S^\ast Id_{\Top}$) is weakly 2-analytic, by Corollary \ref{cor:GSVanlytic}. 
\end{ex}

% %%%%%%%%%%%%%%%%%%%%%%%%%%%%%%%%%%%%%%%%%%
% \subsection{The stages of the towers agree }

% The point of this section is to prove the following result:

% %\begin{prop}
% \noindent \textbf{Proposition \ref{prop:samepieces}:}\textit{
% For $G$ a stably $n$-excisive  homotopy functor,  the following is an equivalence}
% \[
% \wt_n(p_{n,S^V}): \wt_n (G \circ S^V) \ra \wt_n (P_nG \circ S^V)
% \]

% Which, combined with the result of the previous section, concludes the proof of  Theorem \ref{thm:tower-eq}.

 %%%%%%%%%%% %%%%%%%%%%% %%%%%%%%%%%
%\subsubsection{Proof of the proposition and theorem}

%\todo[inline, author=Rosona, color=yellow]{Comment from \cite{goodcalc3}: ``Note that when F is stably n-excisive $P_nF$ can be characterized, up to natural equivalence, as the only n-excisive functor that agrees to nth order with F ."

%So -- we need our input functor to be stably n-excisive so that we know that the map $p_n: F \ra P_n F$ gives an order $n$ agreement, which is a connectivity property, which is key for the proof as written. I don't know if we'll get the equivalence of $\wt_n (G \circ S^V)$ and $\wt_n (P_n G \circ S^V)$ without this style of argument. }

%%%%%%%%%%%%%%%%%%%%%%%%%%%%%%%%%%%%%%
\section{Application: Orthogonally weak analytic functors}\label{sec:orthogweaklytic}
%%%%%%%%%%%%%%%%%%%%%%%%%%%%%%%%%%%%%%
In this section, we show that one can obtain weak analyticity results for functors in $\jcal_0 \Top$ whose unstable first derivative is known already to be analytic. Our immediate application is that the Weiss tower of $V \mapsto BO(V)$ converges to $V \mapsto BO(V)$ when $\dim(V) \geqslant 2$ and similarly for $V \mapsto BU(V)$ for $V$ of dimension $\geq 1$. 

\begin{theorem}\label{thm:weissanalytic}
Let $F$ be a functor in $\jcal_0 \Top$. Assume that its unstable $n$th orthogonal derivative,
the functor $V \mapsto \ind_0^1 F(V)$ in $\jcal_0 \Top$,
is weakly $\rho$-analytic. Assume, moreover, that for $V$ of dimension at least $\rho$, $F(V)$ is path-connected. Then $F$ is itself weakly $\rho$-analytic. 
\end{theorem}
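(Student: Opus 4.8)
The strategy is to understand $F$ via its Weiss tower $\{\wt_n F\}$ and to control the layers $D_n^W F$ using the hypothesis on the first derivative. The key observation is that $\ind_0^1 F$ being weakly $\rho$-analytic should let us recover $\wt_1 F(V)$ from $F(V)$ in the range $\dim(V) \geq \rho$; more precisely, for orthogonal calculus the first derivative determines the linearization $\wt_1 F$ (up to the constant term $\wt_0 F$), so the weak analyticity of $\ind_0^1 F$ pushes up to a statement that $\wt_\infty(\ind_0^1 F)(V) \simeq \ind_0^1 F(V)$ for $\dim(V) \geq \rho$, and hence, by assembling the tower of $\ind_0^1 F$, controls how fast $\wt_n F(V) \to \wt_{n-1} F(V)$ converges.

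First I would recall the structural facts: there is a fibration sequence $D_n^W F \to \wt_n F \to \wt_{n-1} F$, and by Theorem \ref{thm:weissclass} the layer $D_n^W F$ is built from the spectrum $\Psi_F$ assembled out of the $O(n)$-spaces $\ind_0^n F(V)$. The crucial input from Weiss's theory (as re-packaged in \cite{weisserrata}) is a connectivity estimate: for each $n \geq 1$, the map $\wt_n F(V) \to \wt_{n-1} F(V)$ becomes more and more highly connected as $\dim(V)$ grows, with a slope governed by $n$ and by the analyticity data of the derivatives. I would then show that the weak $\rho$-analyticity of $\ind_0^1 F$ feeds into these estimates uniformly in $n$: intuitively, $\ind_0^1 F$ weakly $\rho$-analytic means the Weiss tower of $\ind_0^1 F$ converges for $\dim(V) \geq \rho$, and since $\ind_0^n F$ is built from $\ind_0^1$ of an $(n-1)$-st derivative construction (a form of the chain rule for orthogonal derivatives), the higher derivatives inherit a compatible analyticity with the same radius $\rho$, possibly with a shifted constant. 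This is the technical heart and is where I would need to be careful about how the derivatives of $F$ relate to one another.

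With the connectivity estimates in hand, I would run the standard convergence argument: fix $V$ with $\dim(V) \geq \rho$ and show that the maps $\wt_n F(V) \to \wt_{n-1} F(V)$ are $c_n$-connected with $c_n \to \infty$, so that $F(V) \to \wt_\infty F(V) = \holim_n \wt_n F(V)$ is a weak equivalence. The path-connectedness hypothesis on $F(V)$ for $\dim(V) \geq \rho$ is used precisely to pin down $\pi_0$: it guarantees $F(V) \to \wt_0 F(V)$ behaves correctly at the bottom of the tower (the $n=0$ and $n=1$ stages, where connectivity estimates alone may not yet force an isomorphism on $\pi_0$), so that the $\holim$-of-a-tower-of-increasingly-connected-maps argument actually concludes $F(V) \simeq \wt_\infty F(V)$ rather than merely an equivalence on the universal covers.

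The main obstacle I anticipate is making precise and proving the claim that weak $\rho$-analyticity of $\ind_0^1 F$ propagates to all higher derivatives $\ind_0^n F$ with a uniform radius $\rho$ — i.e. establishing the right chain-rule-type relationship between $\ind_0^n F$ and iterated first derivatives, and tracking constants so that the resulting connectivity slopes still go to infinity. A secondary subtlety is correctly invoking the orthogonal-calculus connectivity estimates from \cite{weisserrata} (the orthogonal analogue of Goodwillie's Proposition 1.5 / Lemma 2.2 machinery) in the form needed here; I would expect to cite \cite[Lemma e.7]{weisserrata} and its surrounding lemmas, and perhaps Lemma \ref{lem:ordnorthordn} and Proposition \ref{lem:ordnWneq} of this paper, to bridge between "agreement up to orth-order $n$" and convergence of the tower.
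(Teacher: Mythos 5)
There is a genuine gap, and it sits exactly where you flagged it: the step where weak $\rho$-analyticity of $\ind_0^1 F$ is supposed to yield quantitative connectivity estimates on the maps $\wt_n F(V) \to \wt_{n-1}F(V)$, uniformly in $n$. Weak $\rho$-analyticity is purely qualitative --- it says only that $\ind_0^1 F(V) \to \wt_\infty \ind_0^1 F(V)$ is an equivalence for $\dim(V)\geq\rho$ --- and carries no connectivity information at all; nothing like the $E_n(c,\kappa)$ estimates of genuine analyticity is available here. Moreover, there is no chain rule expressing $\ind_0^n F$ as an iterate of $\ind_0^1$, so even granting some control of the first derivative you cannot conclude anything about the higher derivatives, let alone with a uniform radius. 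The ``standard convergence argument'' via increasingly connected maps in the tower therefore cannot be launched from the stated hypotheses.

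The paper's proof never touches derivatives beyond the first and never needs connectivity estimates. Its inputs are: (i) the fibre sequence $\ind_0^1 F(V) \to F(V)\to F(V\oplus\R)$ of \cite[Proposition 2.2]{weiss95}; (ii) the fact that $\wt_\infty$ preserves fibre sequences, so $F \to \wt_\infty F$ yields a map of fibre sequences in which, by hypothesis, the map on fibres is an equivalence when $\dim(V)\geq\rho$; and (iii) \cite[Lemma 5.14]{weiss95}, which says that for \emph{any} functor the tower converges at $\R^\infty$. From (i) and (ii), a five-lemma comparison of long exact sequences --- with the path-connectedness hypothesis used precisely to settle $\pi_0$, where the five lemma for spaces is delicate --- shows that each square
\[
\xymatrix{F(\R^n) \ar[r]\ar[d] & F(\R^{n+1})\ar[d] \\ \wt_\infty F(\R^n)\ar[r] & \wt_\infty F(\R^{n+1})}
\]
is homotopy cartesian for $n \geq \rho$. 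Juxtaposing these squares and passing to the filtered homotopy colimit gives a cartesian square comparing $F(\R^n)\to F(\R^\infty)$ with $\wt_\infty F(\R^n) \to \wt_\infty F(\R^\infty)$; by (iii) the right-hand vertical is an equivalence, hence so is the left-hand one. You have the right objects in play (the first derivative, the tower, the $\pi_0$ issue), but the missing idea is to descend from $\R^\infty$ along the fibre sequence rather than to ascend through the layers of the tower.
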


% \todo[color=yellow!40,author=Rosona, inline]{(04.May.2015) I now think the following isn't true/possible}
% \todo[inline, color=cyan!20, author=R]{(29.04.2015) It's possible that if the $n^{th}$ derivative for some $n$ is weakly $\rho$-analytic, the same argument holds. I will think about this. It would be a nice addition to this section.

% Thoughts so far: It's clear that weak $\rho$-analyticity of the 1st derivative implies it for the 1st derivative of the 1st derivative, and vice versa. What's trickier is the proper definition of the (equivariant) 2nd derivative, as you pick up a loop. Then the big diagram looks as the diagram below}
% \[
% \xymatrix@C-0.3cm{
% \text{Ind}^2_0 E  (V) \ar[d]^\simeq \ar[r]    & E(V) \ar@{}[dr]|{hoPB}\ar[r]\ar[d] & \Omega E(V \oplus \R) \ar[d] \ar[r] & \cdots \ar[r]          & \Omega^\infty E(\R^\infty) \ar@{=}[d] \\
% \wt_\infty \text{Ind}^2_0 E  (V) \ar[r]& \wt_\infty E(V) \ar[r] & \wt_\infty \Omega E(V \oplus \R) \ar[r] & \cdots \ar[r] & \wt_\infty \Omega^\infty E(\R^\infty)\\
% &&{\textrm{PART OF THE TODNOTE}}
% }
% \]
% \todo[color=yellow,author=Dave,inline]{Let $h(V)$ be the homotopy fibre of the vertical map. 
% Then $h$ seems to define an $\Omega$-spectrum $h(V) \to \Omega h(V \oplus \rr)$.
% Not sure what this means yet}

\begin{corollary}\label{cor:BOV}
The functor $V \mapsto BO(V)$ is weakly $2$-analytic and 
the functor $V \mapsto BU(V)$ is weakly 1-analytic. 
\end{corollary}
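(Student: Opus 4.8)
The plan is to derive Corollary \ref{cor:BOV} directly from Theorem \ref{thm:weissanalytic} by checking its two hypotheses for $F = BO(-)$ and $F = BU(-)$ respectively. For the orthogonal group case, the main input is the identification of the unstable first orthogonal derivative $\ind_0^1 BO$. Following Arone's computation of the derivatives of $BO$, the first orthogonal derivative of $V \mapsto BO(V)$ is (stably) the sphere spectrum, and unstably $\ind_0^1 BO(V)$ is of the homotopy type of $\Omega^\infty$ applied to a suitably connective piece; concretely one uses that $\ind_0^1 BO$ is weakly equivalent, as a functor in $\jcal_0 \Top$, to something built from the identity functor on spaces restricted along $V \mapsto S^V$. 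The cleanest route is to invoke Example \ref{ex:SVanalytic}: the functor $V \mapsto S^V = S^* \mathrm{Id}_{\Top}$ is weakly $2$-analytic, and $\ind_0^1 BO$ is built from this in a way that preserves weak $2$-analyticity (weak analyticity passing through $\Omega^\infty$, loops, and the relevant homotopy (co)limits). One must also verify the connectivity hypothesis: for $\dim(V) \geqslant 2$, $BO(V)$ is path-connected, which is immediate since $BO(n)$ is connected for all $n \geqslant 0$.

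The key steps, in order, are: first, recall (citing Arone \cite{aroneweiss}) the identification of $\ind_0^1 BO$ and $\ind_0^1 BU$ as functors in $\jcal_0 \Top$; second, show these first derivatives are weakly $2$-analytic (resp. weakly $1$-analytic) by tracing them back to $V \mapsto S^V$ and applying Corollary \ref{cor:GSVanlytic}/Example \ref{ex:SVanalytic} together with the fact that weak $\rho$-analyticity is preserved under the operations used to reconstruct the derivative functor; third, observe the path-connectedness of $BO(V)$ for $\dim(V) \geqslant 2$ and of $BU(V)$ for $\dim(V) \geqslant 1$; fourth, apply Theorem \ref{thm:weissanalytic} to conclude that $BO$ is weakly $2$-analytic and $BU$ is weakly $1$-analytic. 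The dimension thresholds $2$ and $1$ come out exactly as the $\rho$ for which the derivative is weakly $\rho$-analytic, matching the shift predicted in Remark \ref{rem:decrease} (the identity is $1$-analytic, $S^*$ of it is weakly $2$-analytic, and the first orthogonal derivative inherits this).

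The main obstacle I expect is step two: pinning down precisely \emph{which} functor $\ind_0^1 BO$ is, and verifying that the weak $2$-analyticity of $V \mapsto S^V$ transfers to it. This requires care because $\ind_0^1$ is defined via the natural-transformation object $\nat_{\jcal_0 \Top}(\jcal_1(V,-), F)$, and one needs either an explicit homotopy-theoretic model of $\ind_0^1 BO$ (such as the one coming from the fibration sequence $BO(V) \to BO(V \oplus \rr) \to S^V$, which exhibits the first derivative's homotopy type) or a comparison with the Goodwillie derivative of a related space-level functor. Using the fibration $O(V\oplus\rr)/O(V) \simeq S^V$ one gets a handle on how $BO(V)$ sits in its own Taylor tower, and the weak analyticity of the fibre term $S^V$ from Example \ref{ex:SVanalytic} is what feeds into Theorem \ref{thm:weissanalytic}. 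Once the derivative is correctly identified, the remaining verifications (connectivity, and the bookkeeping of the analyticity radius) are routine.
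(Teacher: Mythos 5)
Your route is the paper's route: identify $\ind_0^1 BO$ and $\ind_0^1 BU$, check that these first derivatives are weakly analytic via Example \ref{ex:SVanalytic}, note path-connectedness of $B\gcal$, and apply Theorem \ref{thm:weissanalytic}. The one soft spot is the step you yourself flag as the main obstacle. The machinery you propose there --- describing $\ind_0^1 BO$ as ``$\Omega^\infty$ of a connective piece'' and then transferring weak $2$-analyticity through $\Omega^\infty$, loops and homotopy (co)limits --- is both unsubstantiated (no such closure property of weak analyticity is established anywhere in the paper, and you should not lean on one) and unnecessary: by Weiss \cite[Example 2.7]{weiss95} one has $\ind_0^1 BO(V) \simeq S^V$ on the nose, exactly because the fibre of $BO(V) \to BO(V\oplus\R)$ is $O(V\oplus\R)/O(V) \simeq S^V$ and $\ind_0^1 F(V)$ is the homotopy fibre of $F(V) \to F(V\oplus\R)$. (Note your fibration sequence is written in the wrong order; the fibre term comes first.) So Example \ref{ex:SVanalytic} applies directly with no transfer argument. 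For $BU$ you should also actually record the identification $\ind_0^1 BU(V) \simeq \Sigma S^V$ and observe that $\Sigma S^V \cong S^{V\oplus\R}$, so weak $2$-analyticity of $W \mapsto S^W$ gives weak $1$-analyticity of the derivative of $BU$; this is where the threshold $1$ rather than $2$ comes from, a point your write-up leaves implicit.
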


%\begin{proof}[Corollary\ref{cor:BOV}]
This follows immediately from Theorem \ref{thm:weissanalytic}, using the facts $\ind_0^1 BO(V)\simeq S^V$ \cite[Example 2.7]{weiss95}, 
$\ind_0^1 BU(V) \simeq \Sigma S^V$ and $V \mapsto S^V$ is weakly $2$-analytic by Example \ref{ex:SVanalytic}.
Recall that for $\gcal$ a topological group 
$B\gcal=E\gcal/\gcal$ is path connected.

This particular analyticity bound matches well with observed behavior of BO -- that it does not behave well on $V$ with dimension less than 2 (see, for instance, Proposition 3.4 and 3.5 of Reis-Weiss \cite{weissreis}).

\begin{remark}
This is a formalization of the comment of Arone in \cite[p.13]{aroneweiss} that the Taylor tower of $BO$ converges. 
\end{remark}

\begin{proof}[of Thereom \ref{thm:weissanalytic}]
It suffices to argue with a skeleton of $\jcal_0$, that is, with $\R^n$ instead of arbitrary $V$ of dimension $n$. In the following, we let $\fcal (\rr^\infty)$ denote the homotopy colimit over $k$ of $\fcal (\rr^k)$ for $\fcal \in \jcal_0 \Top$. 

Recall that $F$ is weakly $\rho$-analytic if and only if  $F(V) \overset{\sim}{\lra} \wt_\infty F(V)$ 
for all $V$ of dimension at least $\rho$. 

%The outline of the proof is as follows:  
%\[
%\begin{array}{rrcll}
%\text{(Step 1)} & \hofib [F(\R^n) \ra \wt_\infty F(\R^n) ] & \overset{\simeq}{\lra} &  \hofib [F(\R^{n+1}) %\lra \wt_\infty F(\R^{n+1})] & \forall n \geq \rho\\
%\text{(Step 2)} && \simeq & \hofib [F(\R^\infty) \lra \wt_\infty F(\R^\infty) ]\\ % & \overset{\simeq}{\ra} &  \hofib [F(\R^{n+1}) \ra \wt_\infty F(\R^{n+1})] & \forall n \geq \rho\\
%\text{(Step 3)} && \simeq & \ast\\
%\end{array}
%\]

%\todo[inline, color=magenta]{removed proof outline}

%\todo[inline, color=green]{Still think that for the righthand square
% of Figure 3 to be Cartesian we need to worry about all base points.

% The following is a back of envelope proof that all we need is for $F(\rr^n)$
% to be connected when $n \geqslant \rho$. This holds in the cases we are interested in as $BG$ is path connected. }

%Explanation of claims:

%\todo[inline, color=green]{End of claims}

%\begin{description}
%\item[Step 1:]  
For $F$ a functor in $\jcal_0 \Top$ there is a homotopy fibre sequence
\[
\xymatrix{
\ind_0^1 F(V) \ar[r]% \ar[d]^= 
&
F(V) \ar[r] %\ar[d]^= 
& 
F(V \oplus \R) %\ar[d]^= \\
%E(V) \ar[r] & F(V) \ar[r] & G(V)
}
\]
by \cite[Proposition 2.2]{weiss95}. Since $\wt_\infty$ 
preserves fibre sequences, the natural transformation $F \ra \wt_\infty F$ 
yields a map of fibre sequences as in Figure \ref{fig:seqcart}. 

\begin{figure}[H]
\[
\xymatrix{
\ind_0^1 F (\R^\rho)\ar[d]\ar[r]& F(\R^\rho)\ar[d]\ar[r] & F(\R^{\rho+1})\ar[d]\\
\wt_\infty \ind_0^1 F (\R^\rho) \ar[r] & \wt_\infty F(\R^\rho) \ar[r] & \wt_\infty F(\R^{\rho+1}) \\ 
}
\]
\caption{map of fibre sequences}
\label{fig:seqcart}
\end{figure}

We now show that the right hand square is cartesian:
compare the long exact sequence of homotopy groups 
for (1) the homotopy fibre sequence that is the top of Figure 3
and (2) the homotopy fibre sequence that is
\[
\ind_0^1F(\R^\rho) 
\lra
Q_\rho
\lra
F(\R^{\rho+1}).
\]
Here, $Q_\rho$ is the homotopy pullback of
\[
\wt_\infty F(\R^\rho) \lra
\wt_\infty F(\R^{\rho+1})
\longleftarrow
F(\R^{\rho+1}).
\]
The five lemma applied to the long exact sequences of homotopy
groups associated to (1) and (2)
and a simple diagram chase for $\pi_0$ (using 
path-connectedness of $F(\rr^\rho)$ and  $F(\rr^{\rho+1})$) 
shows that $F(\rr^\rho) \to Q_\rho$ is a weak homotopy equivalence.
Consequently, the right hand square of Figure \ref{fig:seqcart} is cartesian as desired.

% The assumption of weak-$\rho$-analyticity of $\ind_0^1 F$ has as consequence the equivalence of homotopy fibres on the left. It follows by path-connectedness assumption that the righthand square of Figure \ref{fig:seqcart} is cartesian (a homotopy pullback square). This is because a square is cartesian if and only if the homotopy fibres agree (see \cite[Proposition 1.18]{gw91}). 

We can extend the right hand square of Figure \ref{fig:seqcart} to the right to get a shifted copy of the square. 
By the same argument, this is also cartesian. We can repeat this process to obtain %get a diagram as below

% \[
% \xymatrix{
% \ind_0^1 F (\R^\rho)\ar[d]^{\simeq}\ar[r]& F(\R^\rho)\ar[d]\ar[r] \ar@{}[dr]|{\text{hoPB}}& F(\R^{\rho+1})\ar[d]\ar[r]\ar@{}[dr]|{\text{hoPB}}& F(\R^{\rho+2})\ar[d]\\
% \wt_\infty \ind_0^1 F (\R^\rho) \ar[r] & \wt_\infty F(\R^\rho) \ar[r] & \wt_\infty F(\R^{\rho+1}) \ar[r] & \wt_\infty F(\R^{\rho+2}) \\ 
% }
% \]

\[
\xymatrix{
%\ind_0^1 F (\R^\rho)\ar[d]^{\simeq}\ar[r]& 
F(\R^\rho)\ar[d]\ar[r] \ar@{}[dr]|{\text{hoPB}}& F(\R^{\rho+1})\ar[d]\ar[r]\ar@{}[dr]|{\text{hoPB}}& F(\R^{\rho+2})\ar[d] \ar[r] & \cdots\\
%\wt_\infty \ind_0^1 F (\R^\rho) \ar[r] & 
\wt_\infty F(\R^\rho) \ar[r] & \wt_\infty F(\R^{\rho+1}) \ar[r] & \wt_\infty F(\R^{\rho+2}) \ar[r] & \cdots\\ 
}.
\]

Since the juxtaposition of two cartesian squares is cartesian, for all $n \geqslant \rho$ and $k \geqslant 1$ we have a cartesian square as on the left of the diagram below. 
Filtered homotopy colimits preserve cartesian
squares, hence the square on the right of the diagram below is cartesian. 
\[
\xymatrix{
F(\R^n) \ar[d] \ar[r] & F(\R^{n+k}) \ar[d] 
&&
F(\R^n)\ar[r]\ar[d] & F(\R^\infty) \ar[d]\\
\wt_{\infty} F(\R^{n})  \ar[r]     & \wt_{\infty} F(\R^{n+k})
&&
\wt_\infty F(\R^n) \ar[r] & \wt_\infty F(\R^\infty)
}
\]
For any $\fcal \in \mathcal{J}_0\Top$ and any $q \geq 0$, the map $\fcal (\R^\infty) \lra \wt_q \fcal (\R^\infty)$ is a weak homotopy equivalence by \cite[Lemma 5.14]{weiss95}.
That is, the (orthogonal) Taylor tower of $\fcal$ at $\R^\infty$ is constant.   
Therefore, $\fcal(\R^\infty) \overset{\simeq}{\lra}\wt_\infty \fcal(\R^\infty)$ and in particular, this holds for $\fcal = F$.
Thus the right hand vertical in the right hand square above is a weak homotopy equivalence. Since the square is cartesian, the left hand vertical is a weak homotopy equivalence for any $n \geqslant \rho$ as desired.
%\end{description}
\end{proof}

%\input{diagram-section}
%%%%%%%%%%%%%%%%%%%%%%%%%%%%%%%%%%%%%%
\section{Application: Model Category Comparison}\label{sec:diagram}
%%%%%%%%%%%%%%%%%%%%%%%%%%%%%%%%%%%%%%

In the previous sections we have examined the relations
between the orthogonal calculus and the homotopy
functor calculus at the level of homotopy categories.
We now lift this to the level of model structures
and construct the diagram of model categories
and Quillen functors as in Figure \ref{fig:big}.
We will then show that it commutes, in the sense that certain
compositions of functors agree up to natural isomorphism.
The notation here has been chosen to match the preceding paper \cite{barneseldred}.
The next few sections introduce the categories and model structures used
and then we turn to showing that the squares of the diagram commute.

\begin{figure}[H]
\[
\xymatrix@C+0.8cm@R+1cm{
\Sigma_n\circlearrowleft \wcal \Sp
\ar@<-5pt>[r]_{S^*}
\ar@<+5pt>[d]^-{\mu_n^\ast}
\ar @{} [dr] |{1}
&
\ar@<-5pt>[l]_{L_S}
\Sigma_n\circlearrowleft \jcal_1 \Sp
\ar@<+5pt>[r]^{O(n)_+ \smashprod_{\Sigma_n} (-)}
\ar@<+5pt>[d]^-{\alpha_n^\ast}
\ar @{} [dr] |{2}
&
\ar@<+5pt>[l]^-{i^*}
O(n) \circlearrowleft \jcal_1 \Sp
\ar@<+5pt>[d]^-{\alpha_n^\ast}
\\
\ar@<+5pt>[u]^-{\wcal \smashprod_{\wcal_n} -}
\devcat
\ar@<-5pt>[r]_{nS^*}
%\ar@<-5pt>[d]_-{(-)/\Sigma_n }
\ar@<-5pt>[d]_-{(-)/\Sigma_n \circ \mapdiag^*}
\ar @{} [drr] |{3}
&
\ar@<-5pt>[l]_{L_{nS}}
\ar@<+5pt>[u]^-{i^* \jcal_1 \smashprod_{i^* \jcal_n} -}
\interorthdevcat
\ar@<+5pt>[r]^{O(n)_+ \smashprod_{\Sigma_n} (-)}
&
\ar@<+5pt>[u]^-{\jcal_1 \smashprod_{\jcal_n} -}
\ar@<+5pt>[l]^-{i^*}
\orthdevcat
\ar@<-5pt>[d]_-{(-)/O(n) \circ \res_0^n}
\\
\ar@<-5pt>[u]_-{\diff_n}
\wcal \Top_{\nhomog}
\ar@<-5pt>[d]_{\id}
\ar@<-5pt>[rr]_{S^*}
\ar @{} [drr] |{4}
&&
\ar@<-5pt>[ll]_{L_S}
\ar@<-5pt>[u]_{\ind_0^n}
\jcal_0 \Top_{\nhomog}
\ar@<-5pt>[d]_{\id}
\\
\ar@<-5pt>[u]_-{\id}
\wcal \Top_{\nexs}
\ar@<-5pt>[d]_-{\id}
\ar@<-5pt>[rr]_{S^*}
\ar @{} [drr] |{5}
&&
\ar@<-5pt>[ll]_{L_S}
\ar@<-5pt>[u]_{\id}
\jcal_0 \Top_{\npoly}
\ar@<-5pt>[d]_{\id}
\\
\ar@<-5pt>[u]_-{\id}
\wcal \Top_{(n-1)\textrm{-exs}}
\ar@<-5pt>[rr]_{S^*}
&&
\ar@<-5pt>[u]_{\id}
\ar@<-5pt>[ll]_{L_S}
\jcal_0 \Top_{(n-1)\textrm{-poly}}
}
\]
\caption{The diagram of model categories}
\label{fig:big}
\end{figure}

% Note that $\diff_n$ is given by the composite $\obdiag^* \circ \cref_n $.
% This functor has a left adjoint, the composite $(-)/\Sigma_n \circ \Delta_n \circ L_{\obdiag}$,
% which we also call $(-)/\Sigma_n \circ \mapdiag^*$, which we have shortened to $(-)/\Sigma_n$.
% Similarly we have omitted notation for the forgetful functor induced by
% $i_n \co \jcal_0 \to \jcal_n$, and simply written $(-)/O(n)$ as the right adjoint to
% $\ind_0^n$.

% \todo[author=Rosona, color=pink, inline]{(21.Nov.2014) diff, ob-diag etc as yet undefined}
%\subsection{A useful commutative diagram}
%
%%%%%%%%%%%%%%%%%%%%%%%%%%%%%%%%%%%%%%%
%Consider $\alpha_n: J_n \ra J_1$ which sends $V \ra nV$, and on the relevant bundles, takes $U-f(V)$ to $nU-nf(V)$ (in $\gamma_1 (nV, nU)$).
%
%Consider $\phi_n: W_n \ra W$ which sends $X$ to $X^n$
%and on morphism spaces smashes the maps of $W_n$ together
%to get a map in $\wcal$.
%
%\noindent \textbf{Nice statement:} the following commutes:
%
%\[
%\xymatrix{
%J_0 \ar[r]\ar[d]^{S^{-}} \ar@/^20pt/[rr]^{\R^n \ox-} & J_n \ar[r]^{\alpha} & J_1 \ar[d]^{S^{-}}\\
%W \ar[r]^{\tilde{\Delta}}\ar@/_20pt/[rr]_{(-)^n} & W_n \ar[r]^\phi & W \\
%}
%\]
%
%
%
%
%
%
%%\end{itemize}
%

%\input{modelcategory-section}
\subsection{The functor \texorpdfstring{$S^*$}{S*}}
Here we recap the definition of the model categories needed for the
two forms of calculus and show that $S^*$ can be
given the structure of a Quillen functor. Thus squares 4 and 5 of
Figure \ref{fig:big} are commutative squares of Quillen functors.

Recall the functor  $S$ from $\jcal_0$ to $\wcal$, which sends
$V$ to the one-point compactification $S^V$. Given $F \in \wcal \Top$
we can pre-compose with $S$ to obtain $S^*F:=F \circ S \co \jcal_0 \to \Top$.
This functor has a left adjoint,
called $L_S$, which is given by the formula below.
\[
(L_S E)(A) = \int^{V \in \jcal_0} E(V) \smashprod \wcal(S^V,A)
\]
In \cite{mmss01}, $S$ is called $\mathbb{U}$ and its left adjoint is denoted $\mathbb{P}$.

\begin{proposition}\label{prop:levelwisemodel}
  There is a \textbf{objectwise model structures} on $\wcal \Top$ and $\jcal_0 \Top$
  whose fibrations and weak equivalences are defined objectwise. It is proper and cofibrantly generated. 
  \end{proposition}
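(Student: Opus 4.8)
The plan is to exhibit the objectwise structure as a transferred/diagram model structure and then verify the stated properties. First I would recall the standard model structure on $\Top$ (the category of based compactly generated weak Hausdorff spaces): the Quillen model structure with weak homotopy equivalences, Serre fibrations, and cofibrations generated by the sets $I = \{S^{n-1}_+ \to D^n_+\}$ and $J = \{D^n_+ \to (D^n \times [0,1])_+\}$. This is well known to be proper and cofibrantly generated. Since both $\wcal \Top$ and $\jcal_0 \Top$ are categories of enriched functors from a (skeletally) small $\Top$-enriched category into $\Top$, one defines a map $f$ to be a weak equivalence (resp. fibration) exactly when $f(A)$ is a weak equivalence (resp. Serre fibration) in $\Top$ for every object $A$, and a cofibration when it has the left lifting property against all objectwise acyclic fibrations. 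The existence of this structure is the projective (diagram) model structure; I would either cite the general machinery (e.g.\ the enriched-functor version in \cite{mmss01} or Hirschhorn) or give the direct transfer argument: take as generating cofibrations the set $\{ \wcal(A,-) \smashprod i : A \in \wcal, i \in I\}$ (and analogously with $\jcal_0$), and generating acyclic cofibrations $\{\wcal(A,-) \smashprod j : A \in \wcal, j \in J\}$, using the enriched Yoneda isomorphism $\nat(\wcal(A,-) \smashprod K, F) \cong \Map(K, F(A))$ to identify lifting problems objectwise.

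The key steps, in order, are: (1) check the two-out-of-three and retract axioms — immediate since weak equivalences are defined objectwise and these properties are objectwise; (2) check the limit/colimit axioms — both categories are complete and cocomplete with limits and colimits formed objectwise (stated in the excerpt), so this is inherited from $\Top$; (3) verify the two factorization axioms and the remaining lifting axiom via the small object argument applied to the generating sets above, using that the objects $\wcal(A,-)$ (resp.\ $\jcal_0(V,-)$) are small because $\Top$ is locally presentable-like (compactly generated) and domains/codomains of $I,J$ are compact; and (4) identify the resulting cofibrations and fibrant objects — the fibrations coming out of the small object argument are precisely the objectwise Serre fibrations by the Yoneda identification of lifting problems. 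Properness (left and right) then follows objectwise from properness of $\Top$, since pushouts along cofibrations and pullbacks along fibrations are computed objectwise and objectwise weak equivalences are stable under these by properness in $\Top$. Cofibrant generation is exactly the statement that the sets in the previous paragraph generate.

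The main obstacle is the smallness/compactness bookkeeping needed to run the small object argument in the enriched setting: one must know that the representables $\wcal(A,-)$ and $\jcal_0(V,-)$ are small relative to the generating (acyclic) cofibrations, which requires using that the underlying spaces are built from compact generators and that smashing with a fixed space preserves the relevant filtered colimits. This is routine but is the only point that is not a formal consequence of "everything is objectwise." In practice I would simply invoke \cite[Remark 4.7 and the surrounding discussion]{mmss01} (or the analogous statement for $\jcal_0$-spaces, the "level model structure" of MMSS) rather than re-proving it, since that reference already establishes precisely these level (objectwise) model structures, their cofibrant generation, and their properness for exactly these diagram categories; the short proof is therefore: the objectwise model structure is the level model structure of \cite{mmss01} applied to the $\Top$-enriched categories $\wcal$ and $\jcal_0$, and all asserted properties are recorded there.
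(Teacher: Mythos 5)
Your proposal is correct and is essentially the standard argument: the paper itself offers no proof of this proposition, implicitly relying on exactly the level (projective) model structure machinery of \cite{mmss01}, Hirschhorn, and the prequels \cite{barneseldred,barnesoman13}, with generating sets $\{\wcal(A,-)\smashprod i\}$ and $\{\jcal_0(V,-)\smashprod i\}$ and the enriched Yoneda identification of lifting problems, just as you describe. Your handling of the smallness bookkeeping and the objectwise deduction of properness is the right level of care, so there is nothing to add.
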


For the cross effect (Definition \ref{def:cref}) to be a right Quillen functor requires more cofibrations than in the objectwise model structure on $\wcal \Top$.
We now specify the extra maps which are needed: %The extra maps we need are defined below in Definition \ref{def:maps}. 

\begin{definition}\label{def:maps}
Consider the following collection of maps, where $\phi_{\underline{X},n}$ is defined via the projections which send those factors in $S$ to the basepoint.
\[
\begin{array}{c}
\Phi_n = \{  \phi_{\underline{X},n}
\co
\underset{S \in \pcal_0(\underline{n})}{\colim}
\wcal({\bigvee_{l \in \underline{n} - S} X_l}, -)
\longrightarrow
\wcal({\bigvee_{l =1}^n X_l},-)
\ | \ \underline{X} = (X_1, \dots , X_n), X_l \in \skel \wcal  \} \\
\end{array}
\]
We then also define $\Phi_\infty = \cup_{n \geqslant 1} \Phi_n$.
\end{definition}
%Recall that
The cofibre of $\nat(-,F) (\phi_{\underline{X},n})$ is the cross effect
of $F$ at $\underline{X}$, $\cref_n(F)(X_1, \ldots, X_n)$;
see \cite[Lemma 3.14]{BRgoodwillie}.

\begin{definition}
Given $f \co A \to B$ a map of based of spaces and $g \co X \to Y$ in $\wcal$,
the \textbf{pushout product} of $f$ and $g$, $f \square g$, is given by
\[
f \square g \co
B \smsh X \bigvee_{A \smsh X} A \smsh Y \to B \smsh Y.
\]
\end{definition}

  \begin{proposition}\label{prop:crosseffectmodel}
  %There is a \textbf{cross effect model structure} on $\wcal \Top$
  There is a proper, cofibrantly generated model structure on $\wcal \Top$, \textbf{the cross effect model structure},
whose weak equivalences are the objectwise weak homotopy equivalences
and whose generating sets are given by
\[
I_{\wcal cr}  = \Phi_\infty \square I_{\Top} \quad J_{\wcal cr} = \Phi_\infty \square J_{\Top}
\]
  %whose weak equivalences are the levelwise weak equivalences
  %and has more cofibrations than the levelwise model structure. 
  Every cross effect fibration is in particular a objectwise fibration.
\end{proposition}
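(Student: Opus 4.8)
The plan is to obtain the cross effect model structure from the recognition theorem for cofibrantly generated model categories (Kan's principle), taking $W$ to be the objectwise weak homotopy equivalences and $I_{\wcal cr}=\Phi_\infty\square I_{\Top}$, $J_{\wcal cr}=\Phi_\infty\square J_{\Top}$ as the generating cofibrations and generating trivial cofibrations. Since $W$ is defined objectwise it automatically satisfies two-out-of-three and is closed under retracts, both inherited from $\Top$, so the real content is in checking: (i) the domains of $I_{\wcal cr}$ and $J_{\wcal cr}$ are small relative to the corresponding cellular maps; (ii) $J_{\wcal cr}\text{-cell}\subseteq W\cap I_{\wcal cr}\text{-cof}$; and (iii) $I_{\wcal cr}\text{-inj}=W\cap J_{\wcal cr}\text{-inj}$.

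First I would dispatch the routine points. For (i), the domains of the maps in $\Phi_\infty$ are finite colimits of representable functors $\wcal(A,-)$, and these are small because $\Top$ is compactly generated and $\wcal$ is skeletally small; smallness survives the finite colimits and the pushout-product with the compact domains of $I_{\Top}$ and $J_{\Top}$. For the cofibration halves of (ii) and (iii): since $J_{\Top}\subseteq I_{\Top}\text{-cof}$, the standard pushout-product bookkeeping gives $J_{\wcal cr}\subseteq I_{\wcal cr}\text{-cof}$, whence $I_{\wcal cr}\text{-inj}\subseteq J_{\wcal cr}\text{-inj}$. One also observes that for $n=1$ the map $\phi_{X,1}$ is the basepoint inclusion $\ast\to\wcal(X,-)$, so $\Phi_1\square I_{\Top}$ is precisely the generating set of the objectwise model structure of Proposition \ref{prop:levelwisemodel}; hence $I_{\wcal cr}$ contains the objectwise generators, so every $I_{\wcal cr}$-injective map is an objectwise trivial fibration (in particular lies in $W$, giving $I_{\wcal cr}\text{-inj}\subseteq W$) and every $J_{\wcal cr}$-injective map is an objectwise fibration — this is the final assertion of the proposition. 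For the weak-equivalence half of (ii) I would use that each $\phi_{\underline X,n}$ is, evaluated at any object of $\wcal$, a fat-wedge inclusion, hence an objectwise cofibration of spaces; the pushout-product axiom in $\Top$ then makes each $\phi_{\underline X,n}\square j$ an objectwise trivial cofibration for $j\in J_{\Top}$, and since colimits in $\wcal\Top$ are computed objectwise, all of $J_{\wcal cr}\text{-cell}$ lies in $W$.

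The hard part is the remaining inclusion $W\cap J_{\wcal cr}\text{-inj}\subseteq I_{\wcal cr}\text{-inj}$, and for this I would first give an explicit description of $I_{\wcal cr}\text{-inj}$. Via the pullback-hom adjunction, $p\co E\to E'$ lies in $(\Phi_\infty\square I_{\Top})\text{-inj}$ exactly when, for every tuple $\underline X$, the pullback-hom of $\phi_{\underline X,n}$ against $p$ is a trivial fibration of spaces; using the computation of \cite[Lemma 3.14]{BRgoodwillie} that identifies the (co)fibre of $\nat(-,F)(\phi_{\underline X,n})$ with $\cref_n(F)(X_1,\dots,X_n)$, these pullback-homs are controlled by the maps $\cref_m(p)$ together with the objectwise data already handled by the $n=1$ generators. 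Thus $p\in I_{\wcal cr}\text{-inj}$ precisely when $p$ is an objectwise trivial fibration and each $\cref_m(p)$ is a trivial fibration of spaces, and running the same argument with $J_{\Top}$ shows $p\in J_{\wcal cr}\text{-inj}$ precisely when $p$ is an objectwise fibration and each $\cref_m(p)$ is a fibration. Given $p\in W\cap J_{\wcal cr}\text{-inj}$, the point is then that $\cref_m(p)$ is automatically an objectwise weak equivalence — cross effects of an objectwise weak equivalence are objectwise weak equivalences, being built from homotopy fibres of the spaces $p_Y$ — so each $\cref_m(p)$, being both a fibration and a weak equivalence of spaces, is a trivial fibration, whence $p\in I_{\wcal cr}\text{-inj}$. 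I expect this identification of $I_{\wcal cr}\text{-inj}$, together with the compatibility of cross effects with objectwise equivalences, to be the main technical obstacle; everything else is formal.

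Finally, properness follows from the sandwich estimates already established: cofibrations are in particular objectwise cofibrations, weak equivalences are objectwise, colimits are objectwise, and $\Top$ is left proper, giving left properness; dually, cross effect fibrations are objectwise fibrations, limits are objectwise, and $\Top$ is right proper, giving right properness.
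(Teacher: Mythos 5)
The paper offers no proof of this proposition at all: it simply points to \cite[Theorem 3.6]{barneseldred} and \cite[Lemma 6.1]{barnesoman13}. Your recognition-theorem argument is therefore a reconstruction, but it is the standard route and, so far as I can tell, essentially the one taken in those references; the skeleton --- smallness, $J_{\wcal cr}\text{-cell}\subseteq W\cap I_{\wcal cr}\text{-cof}$, the observation that $\Phi_1\square I_{\Top}$ and $\Phi_1\square J_{\Top}$ recover the objectwise generating sets (which is exactly what yields the final sentence of the proposition), and the reduction of $W\cap J_{\wcal cr}\text{-inj}\subseteq I_{\wcal cr}\text{-inj}$ to a statement about corner maps --- is sound. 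Two steps are glossed in a way worth flagging. First, evaluated at $Y\in\wcal$ the map $\phi_{\underline{X},n}$ is the inclusion of the fat wedge of the mapping spaces $\wcal(X_l,Y)$ into their product; these spaces are not CW complexes, so this is an $h$-cofibration (an NDR pair, by well-pointedness) rather than a $q$-cofibration, and the pushout-product axiom of $\Top$ does not literally apply. What one actually uses is the gluing lemma: for $j\in J_{\Top}$ the map $\phi_{\underline{X},n}\square j$ is objectwise an $h$-cofibration and a weak equivalence, and such maps are stable under pushout and transfinite composition, which gives $J_{\wcal cr}\text{-cell}\subseteq W$. Second, your characterisation of $I_{\wcal cr}\text{-inj}$ as ``objectwise trivial fibration plus $\cref_m(p)$ a trivial fibration'' is not quite right as stated: the adjunction produces the relative corner map from $E(\bigvee_l X_l)$ to the pullback of $\lim_{S} E(\bigvee_{l\notin S}X_l)\to\lim_{S} E'(\bigvee_{l\notin S}X_l)\leftarrow E'(\bigvee_l X_l)$, of which the map of cross effects is only the fibre. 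Your intended repair does work --- for $p\in W\cap J_{\wcal cr}\text{-inj}$ the relevant cubes consist of fibrations, so the strict limits are homotopy limits and the corner map is a weak equivalence by right properness of $\Top$, or equivalently by the five-lemma comparison of fibres you describe --- but the argument should be run on the corner maps rather than on $\cref_m(p)$ alone. With those two adjustments, the proof, including the properness argument at the end, goes through.
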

\begin{proof}
See \cite[Theorem 3.6]{barneseldred} and \cite[Lemma 6.1]{barnesoman13}.
\end{proof}
The point of the cross effect model structure is that it allows
$\diff_n$ to be a right Quillen functor, see \cite[Proposition 6.3]{barneseldred}.
Now that we have our initial model structures we can show that $S^*$ is a right Quillen functor. 

\begin{lemma}
  The functor $S^*$ is a right Quillen functor when $\wcal \Top$ and $\jcal_0 \Top$
  are both equipped with the objectwise model structures. Furthermore it is a
  right Quillen functor when $\wcal \Top$ has the cross effect model structure and
  $\jcal_0 \Top$ is equipped with the objectwise model structure.
\end{lemma}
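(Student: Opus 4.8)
The plan is to use the standard criterion that an adjunction $(L,R)$ between model categories is a Quillen adjunction as soon as the right adjoint $R$ preserves fibrations and acyclic fibrations. The adjunction $(L_S, S^*)$ on underlying categories is already available ($L_S$ being the coend written above), so in each of the two cases it remains only to check that $S^*$ preserves fibrations and acyclic fibrations. The fact that makes this routine is that $S^*$ is essentially an evaluation functor: $(S^* F)(V) = F(S^V)$ with $S^V \in \wcal$.

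First I would treat the objectwise/objectwise case. In the objectwise model structures the fibrations and the weak equivalences of $\wcal\Top$ are detected by the evaluation functors $\mathrm{ev}_A \colon \wcal\Top \to \Top$, $A \in \wcal$, and similarly for $\jcal_0\Top$. Hence if $F \to G$ is an objectwise fibration, resp.\ objectwise acyclic fibration, then $F(S^V) \to G(S^V)$ is a fibration, resp.\ acyclic fibration, of spaces for every $V$, so $S^*F \to S^*G$ is an objectwise fibration, resp.\ objectwise acyclic fibration, of $\jcal_0\Top$; thus $S^*$ is right Quillen. For the second statement I would invoke Proposition \ref{prop:crosseffectmodel}: the cross-effect model structure on $\wcal\Top$ has the same weak equivalences as the objectwise one, and every cross-effect fibration is in particular an objectwise fibration. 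Therefore a cross-effect fibration, resp.\ cross-effect acyclic fibration, is in particular an objectwise fibration, resp.\ objectwise acyclic fibration, and the previous sentence shows $S^*$ carries it to an objectwise fibration, resp.\ objectwise acyclic fibration, of $\jcal_0\Top$. Hence $S^*$ is again right Quillen.

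Alternatively one can argue on the left adjoint. By the co-Yoneda lemma $L_S$ carries the represented functor $\jcal_0(V,-)$ to $\wcal(S^V,-)$, and, being $\Top$-enriched and a left adjoint, it commutes with forming pushout products with maps of spaces; hence it sends the generating cofibrations $\{\ast \to \jcal_0(V,-)\}\square I_{\Top}$ and generating acyclic cofibrations $\{\ast \to \jcal_0(V,-)\}\square J_{\Top}$ to $\{\ast \to \wcal(S^V,-)\}\square I_{\Top}$ and $\{\ast \to \wcal(S^V,-)\}\square J_{\Top}$ respectively. In the cross-effect structure these target maps lie in $I_{\wcal cr} = \Phi_\infty \square I_{\Top}$ and $J_{\wcal cr} = \Phi_\infty \square J_{\Top}$, because $\Phi_\infty$ already contains all the maps $\ast \to \wcal(X,-)$ (these are the $n=1$ members of $\Phi_1$). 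The only point that might look like an obstacle is that the cross-effect structure has strictly more cofibrations than the objectwise one, which could a priori break the Quillen property; but this enlargement only shrinks the class of fibrations, so the condition that $S^*$ preserve fibrations becomes weaker, not stronger, and the objectwise argument already establishes its strongest form.
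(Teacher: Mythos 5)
Your main argument is correct and is essentially the paper's own proof: evaluate at $S^V$ to see that objectwise (acyclic) fibrations are preserved, then use that cross-effect fibrations are in particular objectwise fibrations and that the two structures share weak equivalences. The alternative argument via the left adjoint is a fine supplement (modulo the minor point that one should replace $S^V$ by an isomorphic object of $\skel\wcal$ to land the generating maps literally in $\Phi_1 \square I_{\Top}$), but it is not needed.
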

\begin{proof}
  Take some (acyclic) fibration $f$ in the objectwise model structure on $\wcal \Top$.
  Then $f(A)$ is a (acyclic) fibration of spaces for any $A \in \wcal$.
  Hence $f(S^V)$ is a (acyclic) fibration of spaces for any $V \in \jcal_0$.
  This proves the first statement. For the second, we note that the
  identity functor is a right Quillen functor from the cross effect model structure 
  to the objectwise model structure, since every cross effect fibration is, in particular, an objectwise fibration and the model structures have the same weak equivalences.
\end{proof}

In Square 5 of Figure \ref{fig:big} we are interested in the $n$-excisive and 
$n$-polynomial model structures, which we introduce below. 
For the proof, see \cite[Theorem 3.14]{barneseldred} and \cite[Proposition 6.5]{barnesoman13}.

\begin{proposition}
  There is an \textbf{$n$-excisive model structure} on $\wcal \Top$ 
  which has the same cofibrations as the cross effect model structure and 
  whose weak equivalences are those maps $f$ such that 
  $P_n f$ is an objectwise weak equivalence. 
  The fibrant objects are the $n$-excisive functors that are fibrant
  in the cross effect model structure.  
  A map $f: X \ra Y$ is an $n$-excisive fibration if and only if it is 
  a cross effect fibration and the square below is cartesian.
  \[
  \xymatrix{
  X \ar[r] \ar[d] & P_n X \ar[d]\\
  Y \ar[r] & P_n Y 
  } 
 \]

  There is an \textbf{$n$-polynomial  model structure} on $\jcal_0 \Top$ 
  which has the same cofibrations as the objectwise model structure and
  whose weak equivalences are those maps $f$ such that
  $\wt_n f$ is a objectwise weak equivalence.
  The fibrant objects are the $n$-polynomial functors.

  Both of these model structures are proper and cofibrantly generated.
\end{proposition}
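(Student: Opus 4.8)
The plan is to obtain each of the two model structures as a left Bousfield localization of a model structure already in hand: the $n$-excisive structure as a localization of the cross effect model structure on $\wcal\Top$ (Proposition \ref{prop:crosseffectmodel}), and the $n$-polynomial structure as a localization of the objectwise model structure on $\jcal_0\Top$ (Proposition \ref{prop:levelwisemodel}). In both cases the localizing data is packaged by the coaugmented, homotopy-idempotent functor $\P_n$ (respectively $\wt_n$). I would run the argument in two stages. First, apply the topological analogue of Hirschhorn's localization theorem at a suitable \emph{set} of maps: this produces a model structure with the same cofibrations as the base, which is left proper and cofibrantly generated, and in which a base-fibrant object is fibrant exactly when it is local with respect to the chosen set. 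Second, identify this localized structure with the Bousfield--Friedlander $Q$-local model structure for $Q=\P_n$ (respectively $\wt_n$), which is what yields the explicit description of the fibrations and of right properness.

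The set of maps to localize at is the one whose local objects, among the base-fibrant ones, are precisely the $n$-excisive functors. For $\wcal\Top$ this is a set of maps between representable functors — indexed by a skeleton of $\wcal$ — whose image under $\nat(-,F)$ is the comparison map $F(X)\to\holim_{S\in\pcal_0(\underline{n+1})}F(S\join X)$ that defines $n$-excisiveness; for $\jcal_0\Top$ one uses the analogous maps of representables dictated by the formula for $\tau_n$. Two facts make the machinery run. First, $\P_n$ preserves objectwise weak equivalences, since it is a filtered homotopy colimit of the functors $\T_n^k$, each of which is built by applying $F$ to a fixed diagram and then taking a homotopy limit; the same argument applies to $\wt_n$ and $\tau_n$. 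Second, $\P_n$ is homotopy idempotent with essential image the $n$-excisive functors: $\P_n F$ is $n$-excisive and the map $F\to\P_n F$ is, up to homotopy, initial among maps from $F$ to $n$-excisive functors — this is \cite[Theorem 1.8]{goodcalc3} together with the universal property recorded earlier in the background section, and the orthogonal analogue is \cite[Theorem 6.3]{weiss95}.

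Granting all of this, the five assertions follow formally. The cofibrations are unchanged because left Bousfield localization never alters them. A map $f$ is a weak equivalence in the localized structure precisely when it is a $\P_n$-equivalence, and because $\P_n$ represents, up to objectwise equivalence, fibrant replacement in that structure, this is exactly the condition that $\P_n f$ be an objectwise weak equivalence; likewise with $\wt_n$ on the orthogonal side. The fibrant objects are the local ones, i.e.\ the base-fibrant $X$ with $X\to\P_n X$ a weak equivalence; since $n$-excisiveness is invariant under objectwise equivalence, these are exactly the $n$-excisive functors that are fibrant in the cross effect structure (and, on the $\jcal_0\Top$ side, the $n$-polynomial functors, which are automatically objectwise fibrant as every space is fibrant). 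The description of the fibrations is the Bousfield--Friedlander characterization: $f\co X\to Y$ is an $n$-excisive fibration iff it is a cross effect fibration and the naturality square of $\P_n$ at $f$ is homotopy cartesian, which is exactly the stated square. Left properness and cofibrant generation are inherited from the Hirschhorn-style description.

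The step I expect to be the real obstacle is the verification of the non-formal Bousfield--Friedlander axiom — that the pullback of a $\P_n$-equivalence along a $\P_n$-fibration between $\P_n$-fibrant objects is again a $\P_n$-equivalence — since this is what delivers right properness of the localized structures. It reduces to the fact that $\P_n$, hence each $\T_n$, hence the homotopy limit defining it, commutes up to weak equivalence with the homotopy pullbacks in question, combined with right properness of the cross effect (resp.\ objectwise) base; the homotopical input is mild, but the argument must be carried out in the topological rather than simplicial setting, which is precisely where \cite{barneseldred} and \cite{barnesoman13} do the bookkeeping. A secondary point to check is that the localizing class is genuinely a set and that the cellularity and smallness hypotheses needed for the topological localization theorem hold in these enriched functor categories; both are routine since $\wcal$ and $\jcal_0$ are skeletally small.
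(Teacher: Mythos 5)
Your proposal is correct and follows essentially the same route as the paper's cited sources (\cite[Theorem 3.14]{barneseldred} and \cite[Proposition 6.5]{barnesoman13}): a left Bousfield localization of the cross effect (resp.\ objectwise) model structure at a set of maps of representables encoding $n$-excisiveness (resp.\ the maps $\delta_{n,U,V}$ of Remark \ref{rmk:directproof}), with the fibration characterization and right properness obtained by identifying the result with the Bousfield--Friedlander localization at the homotopy-idempotent functor $P_n$ (resp.\ $\wt_n$). You also correctly isolate the genuinely non-formal input, namely the A3-type axiom on pullbacks of $P_n$-equivalences along fibrations between fibrant objects.
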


The functor $S^*$ remains a right Quillen functor with respect to these
model structures.

\begin{proposition}
  The functor $S^*$ is a right Quillen functor between $\wcal \Top$
  with the $n$-excisive model structure and $\jcal_0 \Top$ with
  the $n$-polynomial model structure.
\end{proposition}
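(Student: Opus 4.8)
The plan is to reduce the claim to what has already been established. Recall that $S^*$ has left adjoint $L_S$ and, by the preceding lemma, $(L_S, S^*)$ is a Quillen pair for the objectwise model structures, and also a Quillen pair when $\wcal \Top$ carries the cross effect model structure. So the only thing to verify is that the pair remains Quillen when we pass from the cross effect model structure to the $n$-excisive model structure on $\wcal \Top$ and from the objectwise to the $n$-polynomial model structure on $\jcal_0 \Top$. Since both localizations keep the same cofibrations as the respective underlying model structures, $L_S$ still sends (generating) cofibrations to cofibrations; the content is that $S^*$ is still right Quillen, equivalently that $L_S$ is left Quillen, i.e.\ that $L_S$ preserves acyclic cofibrations for the new weak equivalences.

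The cleanest route is to check the condition on $S^*$ directly: it suffices to show $S^*$ preserves fibrant objects and acyclic fibrations (using that the $n$-excisive model structure is a left Bousfield-type localization whose fibrant objects are the $n$-excisive functors fibrant in the cross effect structure, and dually on the other side the fibrant objects are the $n$-polynomial functors). We already know $S^*$ carries cross effect fibrations to objectwise fibrations, hence preserves acyclic fibrations, so it remains to show: if $F \in \wcal \Top$ is $n$-excisive (and cross-effect fibrant), then $S^* F$ is $n$-polynomial. This is exactly Proposition \ref{prop:nexc-north}: restriction to $\jcal_0$ of an $n$-excisive functor is $n$-polynomial. Combined with the first lemma (which handles the acyclic fibrations and the cofibration half via $L_S$), this gives that $S^*$ is right Quillen between the localized structures, by the standard criterion that a right adjoint between (left Bousfield localizations of) model categories is right Quillen as soon as its left adjoint was left Quillen before localization and the right adjoint preserves fibrant objects. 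Alternatively, and perhaps more self-containedly, one invokes the characterization of $n$-excisive fibrations from the displayed cartesian-square criterion: given an $n$-excisive fibration $f \colon X \to Y$, it is a cross effect fibration with $X \to Y \times_{P_n Y} P_n X$ an objectwise equivalence; applying $S^*$ and using that $S^*$ commutes with the relevant pullbacks and that $S^* P_n \simeq \wt_n S^*$ on the functors in play (again Proposition \ref{prop:nexc-north} together with Proposition \ref{prop:samepieces}) shows $S^* f$ is an $n$-polynomial fibration.

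Concretely I would: (1) recall that $(L_S, S^*)$ is a Quillen pair for the cross effect / objectwise structures; (2) note both target model structures are localizations with unchanged cofibrations, so $L_S$ automatically preserves cofibrations; (3) invoke the criterion that it then suffices for $S^*$ to preserve fibrant objects; (4) observe $S^*$ preserves acyclic fibrations since cross effect fibrations go to objectwise fibrations and weak equivalences are unchanged; (5) apply Proposition \ref{prop:nexc-north} to conclude $S^*$ sends $n$-excisive fibrant objects to $n$-polynomial fibrant objects; hence $S^*$ is right Quillen. I expect step (3)—pinning down the precise form of the "preserve fibrant objects plus acyclic fibrations" criterion for these particular (non-simplicial, topologically enriched) localized model structures, and making sure the localization is of the Bousfield type for which that criterion applies—to be the only genuine subtlety; everything else is either already proved (Proposition \ref{prop:nexc-north}) or formal.
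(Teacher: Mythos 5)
Your proposal is correct and follows essentially the same route as the paper: the paper likewise composes the unlocalized Quillen pair with the localization functor and then invokes Hirschhorn's localization criterion (\cite[Theorem 3.1.6]{hir03}) together with Proposition \ref{prop:nexc-north} to supply the needed fact that $S^*$ carries $n$-excisive (fibrant) functors to $n$-polynomial ones. The subtlety you flag in step (3) is exactly what the citation of Hirschhorn resolves.
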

\begin{proof}
We must show that the Quillen pair of the previous lemma
induces a Quillen pair between the model categories
$\wcal \Top_{\nexs}$ and $\jcal_0 \Top_{\npoly}$.
We know that the functor $S^*$ is a right Quillen functor from
$\wcal \Top$ to
$\jcal_0 \Top$, where both categories have their objectwise model structures.
Similarly the identity functor from $\wcal \Top_{\nexs}$ to
$\wcal \Top$ (with the objectwise model structure) is a right Quillen functor.
The result then follows by \cite[Theorem 3.1.6]{hir03} and
Section \ref{sec:nexsnpoly}, which proves that $S^*$ sends
$n$-excisive functors to $n$-polynomial functors.
\end{proof}

\begin{corollary}
The derived functor of $S^*$ from $\wcal \Top_{\nexs}$
to $\jcal_0 \Top_{\npoly}$ is given by $S^* \circ P_n$.
When restricted to stably $n$-excisive functors, 
the derived functor is given by $S^*$ (by Proposition \ref{prop:samepieces}). 
\end{corollary}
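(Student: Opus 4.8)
The plan is to read the statement off from the definition of a right derived functor together with Propositions~\ref{prop:nexc-north} and~\ref{prop:samepieces}; there is no genuinely hard step, so the work is mostly organisational. Recall that the right derived functor of a right Quillen functor is computed by applying it to a fibrant replacement, and that by the description of the $n$-excisive model structure a fibrant replacement of $F$ in $\wcal \Top_{\nexs}$ consists of an $n$-excisive functor $F^{\mathrm{f}}$ that is fibrant in the cross effect model structure, together with a map $F \to F^{\mathrm{f}}$ whose image under $P_n$ is an objectwise weak equivalence. Thus $\mathbb{R}S^*(F)$ is, up to weak equivalence, $S^*(F^{\mathrm{f}})$, and everything reduces to identifying $S^*(F^{\mathrm{f}})$.

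First I would observe that a fibrant replacement of $F$ in $\wcal \Top_{\nexs}$ has the objectwise homotopy type of $P_n F$: the localization map $F \to P_n F$ is by construction a weak equivalence in $\wcal \Top_{\nexs}$ (its image under $P_n$ is an objectwise equivalence, $P_n$ being homotopy idempotent), and $P_n F$ is $n$-excisive; composing with a cross effect fibrant replacement $P_n F \to (P_n F)^{\mathrm{f}}$ — an objectwise weak equivalence, which therefore leaves $n$-excisiveness intact — produces an honest fibrant replacement $F \to (P_n F)^{\mathrm{f}}$ of $F$ in $\wcal \Top_{\nexs}$. Since $S^*$ preserves objectwise weak equivalences, $S^*\bigl((P_n F)^{\mathrm{f}}\bigr)$ is objectwise weakly equivalent to $S^*(P_n F)$.

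Next I would invoke Proposition~\ref{prop:nexc-north}: $S^*(P_n F)$ is $n$-polynomial, hence fibrant in $\jcal_0 \Top_{\npoly}$, and an objectwise weak equivalence is in particular an $n$-polynomial weak equivalence. Therefore $S^*(P_n F)$ represents $\mathbb{R}S^*(F)$, and this identification is natural in $F$, which is exactly the assertion $\mathbb{R}S^* \cong S^* \circ P_n$. For the final sentence, if $F$ is stably $n$-excisive then Proposition~\ref{prop:samepieces} identifies the $n$-polynomial approximation of $S^*F$ with the map $S^*F \to S^*(P_n F)$; equivalently, this map is a weak equivalence in $\jcal_0 \Top_{\npoly}$, so $S^*F$ and $S^*(P_n F)$ become isomorphic in the homotopy category and the derived functor may be computed already on $S^*F$ itself.

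The only point requiring care is the bookkeeping of the second paragraph — that moving to a cross effect fibrant replacement does not disturb the $n$-excisive homotopy type, and that $S^*$ carries such a replacement into the $n$-polynomial (hence fibrant) functors of $\jcal_0 \Top$. Both are immediate from Proposition~\ref{prop:nexc-north} together with the fact that $n$-excisiveness and $n$-polynomialness are invariant under objectwise weak equivalence, so there is no real obstacle here.
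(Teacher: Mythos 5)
Your argument is correct and is precisely the reasoning the paper leaves implicit: the corollary is stated without proof as an immediate consequence of $S^*$ being right Quillen, the identification of fibrant objects in $\wcal \Top_{\nexs}$ with (cross-effect fibrant) $n$-excisive functors, Proposition \ref{prop:nexc-north}, and Proposition \ref{prop:samepieces}. Your careful handling of the cross-effect fibrant replacement is a correct filling-in of a detail the authors suppress, not a deviation from their approach.
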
 

\begin{remark}\label{rmk:directproof}
Proposition \ref{prop:nexc-north} implies that 
the left adjoint $\wcal \smashprod_{\jcal_0} (-)$
takes maps of the form
\[
\delta_{n,U,V} \co
\hocolim_{0 \neq U \subseteq \rr^{n+1}}
\jcal_0( U  \oplus V, - )
\longrightarrow
\jcal_0( V, - )
\]
(for $V \in \jcal_0$) to $P_n$-equivalences in $\wcal \Top$.
That is, 
\[
\wcal \smashprod_{\jcal_0} \delta_{n,U,V} \co
\hocolim_{0 \neq U \subseteq \rr^{n+1}}
\wcal( S^U \smashprod S^V, - )
\longrightarrow
\wcal( S^V, - )
\]
is a $P_n$-equivalence for 
all $V \in \jcal_0 \Top$.
\end{remark}

In Square 4 of Figure \ref{fig:big} we have $n$-homogeneous model structures, which we introduce below. For the details, see \cite[Theorem 3.18]{barneseldred} and \cite[Proposition 6.9]{barnesoman13}.

\begin{proposition}
  There is an \textbf{$n$-homogeneous model structure} on $\wcal \Top$
  which has the same fibrations as the $n$-excisive model structure and
  whose weak equivalences are those maps $f$ such that
  $D_n^G f$ is an objectwise weak equivalence.
  The cofibrant-fibrant objects are the $n$-homogeneous functors that are 
  cofibrant-fibrant in the cross effect model structure.

  There is an \textbf{$n$-homogeneous model structure} on $\jcal_0 \Top$
  which has the same fibrations as the $n$-excisive model structure and
  whose weak equivalences are those maps $f$ such that
  $D_n^W f$ is an objectwise weak equivalence.
  The cofibrant-fibrant objects are the $n$-homogeneous functors that are
  cofibrant in the objectwise model structure.

  Both of these model structures are proper, stable and cofibrantly generated.
\end{proposition}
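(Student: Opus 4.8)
The plan is to realise each $n$-homogeneous model structure as a right Bousfield localization (cellularization) of the $n$-excisive model structure on $\wcal \Top$, respectively the $n$-polynomial model structure on $\jcal_0 \Top$ — the route taken in \cite[Theorem 3.18]{barneseldred} and \cite[Proposition 6.9]{barnesoman13}. This framing is forced by the statement: a right Bousfield localization keeps the fibrations, enlarges the weak equivalences and shrinks the cofibrant objects to the colocal ones, and indeed every $n$-excisive equivalence $f$ is automatically a $D_n^G$-equivalence, since $P_{n-1}\simeq P_{n-1}P_n$ makes both $P_n f$ and $P_{n-1}f$ objectwise equivalences and hence so is $D_n^G f=\hofib(P_n f\to P_{n-1}f)$ (and the same orthogonally). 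To apply Hirschhorn's cellularization theorem \cite{hir03} we need $\wcal \Top_{\nexs}$ and $\jcal_0 \Top_{\npoly}$ to be right proper — part of the previous Proposition — and cellular; cellularity I would track from the objectwise model structures (cellular because $\Top$ is and $\wcal$, $\jcal_0$ are skeletally small) through the successive left Bousfield localizations producing the cross effect, $n$-excisive and $n$-polynomial structures, using that left Bousfield localization preserves cellularity.

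Next I would fix the cells. In the homotopy functor setting take $K^G$ to be a set of cofibrant representatives of the $n$-homogeneous functors associated by Theorem \ref{thm:classifications} and Theorem \ref{thm:goodclass} to a set of generators of $\ho(\Sigma_n\lca\Sp)$ — for instance the free $\Sigma_n$-spectra $\Sigma^\infty_+(\Sigma_n)$ and their shifts; in the orthogonal setting take $K^W$ built the same way from generating $O(n)$-spectra via Theorem \ref{thm:classifications} and Theorem \ref{thm:weissclass}. Hirschhorn's theorem then produces the $K^G$-cellularization (respectively $K^W$-cellularization): a model structure with the same fibrations as the $n$-excisive (respectively $n$-polynomial) structure, cofibrant objects the $K$-colocal objects, weak equivalences the $K$-colocal equivalences, and which is again right proper and cofibrantly generated.

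It remains to identify the localized weak equivalences and the cofibrant--fibrant objects with those in the statement. For the weak equivalences one inclusion was noted above; for the converse one checks that the derived mapping space $\Map(A,-)$ out of a cell $A\in K^G$ records only the associated $\Sigma_n$-spectrum, so that a $K^G$-colocal equivalence is exactly a map inducing an equivalence on that spectrum, which by Theorem \ref{thm:goodclass} is precisely a $D_n^G$-equivalence (and similarly $K^W$-colocal equivalences are the $D_n^W$-equivalences). For the cofibrant--fibrant objects: a right Bousfield localization leaves the fibrant objects unchanged, so they are the $n$-excisive functors that are fibrant in the cross effect structure; among these, being $K^G$-colocal means lying in the localizing subcategory generated by $K^G$, which by the classification is the subcategory of $n$-homogeneous functors, i.e. those with $P_{n-1}(-)$ objectwise contractible — this gives the stated description, and likewise for $\jcal_0 \Top$ with the objectwise structure in place of the cross effect one. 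Right properness and cofibrant generation are part of Hirschhorn's output; stability follows because on cofibrant--fibrant objects $\Sigma$ and $\Omega$ correspond to suspension and loops of the associated spectrum, which are mutually inverse, so that $\ho(\wcal \Top_{\nhomog})\simeq\ho(\Sigma_n\lca\Sp)$ (respectively $\ho(O(n)\lca\Sp)$); left properness then follows from right properness, since in a stable model category homotopy pushout and pullback squares coincide.

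The main obstacle is the identification of the $K$-colocal equivalences with the $D_n$-equivalences: this is where the classification theorems do the real work, since one must verify that the chosen cells are large enough to detect the whole spectrum $\Theta$ (respectively $\Psi$) and detect nothing more. A secondary, purely point-set issue is checking cellularity of the $n$-excisive and $n$-polynomial model structures, which — unlike right properness — is genuinely required before Hirschhorn's cellularization machinery can be invoked.
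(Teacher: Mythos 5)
Your proposal follows the same route as the paper, which for this statement simply cites \cite[Theorem 3.18]{barneseldred} and \cite[Proposition 6.9]{barnesoman13}: both references construct the $n$-homogeneous structures as right Bousfield localizations (cellularizations) of the $n$-excisive, respectively $n$-polynomial, model structures and then identify the colocal objects and colocal equivalences with the $n$-homogeneous functors and $D_n$-equivalences via the classification theorems. Your outline is correct, and you rightly flag cellularity and the detection argument for the colocal equivalences as the places where the real work lies.
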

%\begin{proof}

%\end{proof}

The functor $S*$ is not just a right Quillen functor for $n$-polynomial functors, but also for $n$-homogeneous.

\begin{theorem}
  The functor $S^*$ is a right Quillen functor from $\wcal \Top$
  (equipped with the $n$-homogeneous model structure) to $\jcal_0 \Top$
  (equipped with the $n$-homogeneous model structure).
\end{theorem}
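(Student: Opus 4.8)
The plan is to check that $S^*$ preserves fibrations and trivial fibrations; since $S^*$ already has the left adjoint $L_S$, that suffices. Fibrations are immediate: the fibrations of the $n$-homogeneous model structure on $\wcal \Top$ are the $n$-excisive fibrations, those of the $n$-homogeneous model structure on $\jcal_0 \Top$ are the $n$-polynomial fibrations, and $S^*$ is already a right Quillen functor from the $n$-excisive to the $n$-polynomial model structure. So the whole point is to show that $S^*$ sends trivial fibrations to trivial fibrations.

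A trivial fibration in the $n$-homogeneous model structure on $\wcal \Top$ is an $n$-excisive fibration $f \co X \to Y$ with $D_n^G f$ an objectwise weak equivalence; I would analyse such an $f$ through $\Phi = \hofib(f)$, formed objectwise, so $\Phi \in \wcal \Top$. Because $f$ is an $n$-excisive fibration, the square with rows $X \to P_n X$ and $Y \to P_n Y$ is homotopy cartesian, so $\Phi \to \hofib(P_n f)$ is an equivalence; as $P_n$ commutes with homotopy fibres this is, naturally, the map $\Phi \to P_n \Phi$, so $\Phi$ is $n$-excisive. Because $D_n^G$ commutes with homotopy fibres, $\hofib(D_n^G f) \simeq D_n^G \Phi$, which is objectwise contractible since $D_n^G f$ is an objectwise equivalence; combined with $\Phi \simeq P_n \Phi$, the fibre sequence $D_n^G \Phi \to P_n \Phi \to P_{n-1}\Phi$ forces $\Phi$ to be $(n-1)$-excisive. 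By Proposition \ref{prop:nexc-north} (with $n-1$ in place of $n$), $S^* \Phi$ is then $(n-1)$-polynomial, hence $n$-polynomial by Proposition \ref{prop:n-is-n+1}, so $D_n^W(S^*\Phi)$ is objectwise contractible. Now $S^*$ commutes with objectwise homotopy fibres, so $\hofib(S^* f) = S^* \Phi$; applying $D_n^W$ — which preserves homotopy fibre sequences — to $S^* \Phi \to S^* X \to S^* Y$ shows that the objectwise homotopy fibre of $D_n^W(S^* f)$ is contractible, whence $D_n^W(S^* f)$ is an objectwise weak equivalence. Since $S^* f$ is also an $n$-polynomial fibration (as $S^*$ is right Quillen between the $n$-excisive and $n$-polynomial structures), i.e.\ a fibration in the $n$-homogeneous model structure on $\jcal_0 \Top$, it is a trivial fibration there, as required.

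The step needing the most care is ``$D_n^G f$ an objectwise equivalence $\Rightarrow \Phi = \hofib(f)$ is $(n-1)$-excisive'': this is exactly where one uses that $f$ is a \emph{fibration} in the $n$-excisive structure, via the cartesian-square characterisation of $n$-excisive fibrations, and where one must keep the naturality of the transformations $\id \to P_n \to P_{n-1}$ straight so that the identifications $\hofib(P_n f) \simeq P_n \Phi$ and $\hofib(D_n^G f) \simeq D_n^G \Phi$ are the expected ones. One must also take a little care deducing an objectwise equivalence from an objectwise contractible homotopy fibre, using that $D_n^G$ and $D_n^W$ are exact for the classifications of Theorem \ref{thm:classifications} (equivalently, that homogeneous functors take values in infinite loop spaces, so $\pi_0$ is controlled). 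Everything else is formal: $S^*$ commutes with objectwise homotopy fibres since it is precomposition with $S$, and $P_n$, $P_{n-1}$, $\wt_n$, $\wt_{n-1}$ — hence $D_n^G$ and $D_n^W$ — commute with homotopy fibres, being filtered homotopy colimits of homotopy limits.
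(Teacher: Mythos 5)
Your proof is correct, and its overall shape is the same as the paper's: reduce to acyclic fibrations, show the objectwise homotopy fibre $\Phi$ of such an $f$ is $(n-1)$-excisive, deduce that $S^*\Phi$ is $(n-1)$-polynomial via Proposition \ref{prop:nexc-north}, and conclude that $S^*f$ is a weak equivalence in the stable $n$-homogeneous structure on $\jcal_0\Top$. The one real difference is where the work happens: the paper simply cites \cite[Lemma 6.22]{BRgoodwillie} for the statement that an acyclic $n$-homogeneous fibration is an $(n-1)$-excisive fibration (whence its fibre is $(n-1)$-excisive), whereas you prove this input directly from the cartesian-square characterisation of $n$-excisive fibrations together with the fact that $P_n$ and $D_n^G$ commute with homotopy fibres. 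That is a legitimate and more self-contained route, and you correctly identify the only delicate point, namely that ``$D_n^G\Phi$ objectwise contractible'' plus ``$\Phi\simeq P_n\Phi$'' must be upgraded to ``$P_n\Phi\to P_{n-1}\Phi$ is an equivalence,'' which is not automatic from a contractible homotopy fibre alone because of $\pi_0$. The clean way to close that gap (and what underlies the cited lemma) is Goodwillie's delooping \cite[Lemma 2.2]{goodcalc3}: $P_n\Phi\to P_{n-1}\Phi\to R_n\Phi$ is a fibre sequence with $R_n\Phi$ $n$-homogeneous and $D_n^G\Phi\simeq\Omega R_n\Phi$, so the infinite-loop-space structure on $R_n\Phi$ lets you control $\pi_0$ as you indicate. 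Your final step also differs cosmetically: you argue with $D_n^W$ of the fibre sequence and exactness, where the paper invokes stability of $\jcal_0\Top_{\nhomog}$ and triviality of $(n-1)$-polynomial objects there; these are the same argument in different clothing.
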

\begin{proof}
Since the $n$-homogeneous fibrations are precisely the
$n$-excisive (or $n$-polynomial) fibrations, $S^*$ preserves
fibrations.
Let $f \co X \to Y$ in $\wcal \Top$
be an acyclic fibration in the $n$-homogeneous model
structure.
By \cite[Lemma 6.22]{BRgoodwillie} $f$ is
an $(n-1)$-excisive fibration.
Thus $S^* f$ is an $(n-1)$-polynomial fibration
and the homotopy fibre of $S^* f$ is $(n-1)$-polynomial.
The model category $\jcal_0 \Top_{\nhomog}$ is stable and
$(n-1)$-polynomial functors are trivial in this model structure,
hence $S^*f$ is a weak equivalence in $\jcal_0 \Top_{\nhomog}$.
\end{proof}

\subsection{Comparisons between categories of spectra}\label{sec:compcatspectra}
%%%%%%%%%%%%%%%%%%%%%%%%%%%%%%%%%%
%%%%%%%%%%%%%%%%%%%%%%%%%%%%%%%%%%
Recall from Definition \ref{def:jcaln} that $\jcal_n$ is an enriched category of finite dimensional real inner product spaces with the space of morphisms the Thom space of the following vector bundle: 
\[
\gamma_n (U,V) = \{ (f,x) | f: U \to V,   x \in \rr^n \otimes (V-f(U))\}
\]
%\todo[inline, color=cyan!20, author=R]{(21.04.2015)At this point, I/the audience need(s) to be reminded of what $\jcal_1$ is}
In this section, we compare the three categories of spectra with group action: 
$\Sigma_n\circlearrowleft \wcal \Sp$, $\Sigma_n\circlearrowleft \jcal_1 \Sp$ and
$O(n) \circlearrowleft \jcal_1 \Sp$. 
In terms of the diagram in Figure \ref{fig:big}, this comparison is the top row:
\[
\xymatrix@C+0.8cm@R+1cm{
\Sigma_n\circlearrowleft \wcal \Sp
\ar@<-5pt>[r]_{S^*}
&
\ar@<-5pt>[l]_{L_S}
\Sigma_n\circlearrowleft \jcal_1 \Sp
\ar@<+5pt>[r]^{O(n)_+ \smashprod_{\Sigma_n} (-)}
&
\ar@<+5pt>[l]^-{i^*}
O(n) \circlearrowleft \jcal_1 \Sp
\\
}
\]
In each case the notation $\Sp$ is to remind us that the stable model structure is being used.

Recall that an orthogonal spectrum is simply a continuous functor from $\jcal_1$ to $\Top$
and a $\wcal$-spectrum is a continuous functor from $\wcal$ to $\Top$. In both cases, we can forget structure to obtain a \textbf{sequential spectrum}:
a collection of spaces $\{ X_n \}_{n \geqslant 0}$ with maps
$X_n \smashprod S^1 \to X_{n+1}$. In the orthogonal case, we
set $X_n$ to be the functor evaluated at $\rr^n$, in the $\wcal$ case we set
it to be the functor evaluated at $S^n$.

\begin{proposition}
  For $\gcal=\Sigma_n$ or $O(n)$ and $\dcal=\jcal_1$ or $\wcal$, $\gcal \circlearrowleft \dcal \Sp$ denotes the category of $\gcal$-objects and $\gcal$-equivariant maps in $\dcal\Top$, with the \textbf{stable model structure}. The weak equivalences are those maps
  which forget to $\pi_*$-isomorphisms of non-equivariant fibre spectra. 

  The generating cofibrations are given by 
  $\gcal_+ \smashprod \dcal(d,-) \smashprod i$
  where $d$ is an element of a skeleton of $\dcal$ and $i$ is a generating
  cofibration of based topological spaces. 
  The cofibrant objects are $\gcal$-free (they have no $\gcal$-fixed points). 
  These model structures are proper, stable and cofibrantly generated.
\end{proposition}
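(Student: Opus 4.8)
The plan is to realise each of these model structures by transferring the (non-equivariant) stable model structure on the diagram-spectra category $\dcal \Top$ along the free--forgetful adjunction for $\gcal$-objects. First recall that for $\dcal = \jcal_1$ (orthogonal spectra) or $\dcal = \wcal$ ($\wcal$-spectra), the category $\dcal \Top$ carries a cofibrantly generated, proper, stable model structure --- the stable model structure of \cite{mmss01} --- whose weak equivalences are the stable equivalences, i.e.\ the maps that become $\pi_\ast$-isomorphisms after forgetting to underlying sequential spectra, whose generating cofibrations are the maps $\dcal(d,-) \smashprod i$ with $d$ in a skeleton of $\dcal$ and $i$ a generating cofibration of based spaces, and which has some set $J_{\dcal}$ of generating acyclic cofibrations. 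Since $\dcal \Top$ is a $\Top$-model category, smashing with a cofibrant based space is left Quillen.

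Now $\gcal \lca \dcal \Top$ is the category of objects of $\dcal \Top$ with a continuous basepoint-fixing $\gcal$-action and $\gcal$-equivariant maps; its forgetful functor $U$ has left adjoint $\gcal_+ \smashprod (-)$ (the free $\gcal$-object) and also a right adjoint (the cofree $\gcal$-object), so $U$ preserves all limits and colimits. I would then invoke the recognition theorem for cofibrantly generated model categories (e.g.\ \cite[Theorem~11.3.2]{hir03}): declare $f$ in $\gcal \lca \dcal \Top$ to be a weak equivalence, resp.\ fibration, precisely when $Uf$ is one, and take as generating cofibrations $\{\gcal_+ \smashprod \dcal(d,-) \smashprod i\}$ and as generating acyclic cofibrations $\{\gcal_+ \smashprod j : j \in J_{\dcal}\}$. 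Two hypotheses must be verified: that these sets permit the small object argument, which holds by the usual topological smallness arguments of \cite{mmss01}; and that every relative $(\gcal_+ \smashprod J_{\dcal})$-cell complex is a weak equivalence. For the latter the key point is that $\gcal_+ \smashprod (-)$ preserves acyclic cofibrations, because $\gcal_+$ is cofibrant as a based space --- for $\gcal = \Sigma_n$ it is a finite discrete based set, and for $\gcal = O(n)$ it is a based $CW$-complex, $O(n)$ being a compact smooth manifold. Since $U$ commutes with pushouts and transfinite composites, it then sends such a cell complex to a transfinite composite of pushouts of acyclic cofibrations in $\dcal \Top$, hence to an acyclic cofibration, hence to a weak equivalence.

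The remaining clauses follow formally. Any cofibrant object is a retract of a $\{\gcal_+ \smashprod \dcal(d,-) \smashprod i\}$-cell complex; the class of $\gcal$-objects with no non-basepoint fixed points contains the free cells $\gcal_+ \smashprod (\text{cell})$ and is closed under the pushouts, transfinite composites and retracts used to build such a complex, so cofibrant objects are $\gcal$-free. Cofibrant generation is built into the construction. For properness, $U$ preserves pushouts and pullbacks, sends fibrations and weak equivalences to fibrations and weak equivalences by fiat, and sends cofibrations to cofibrations (it takes $I$-cell complexes of $\gcal$-objects and their retracts to $I$-cell complexes and their retracts in $\dcal \Top$); so a pushout of a weak equivalence along a cofibration, resp.\ a pullback of one along a fibration, in $\gcal \lca \dcal \Top$ becomes, after applying $U$, a square of the same type in the proper model category $\dcal \Top$, whence the relevant leg is a weak equivalence there and therefore upstairs. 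For stability, the suspension $S^1 \smashprod (-)$ and loop functors commute with $U$, and $U$ preserves cofibrant and fibrant objects, hence commutes up to weak equivalence with cofibrant and fibrant replacement; so the derived unit and counit of $(\Sigma, \Omega)$ are detected by $U$ to be those of the stable model category $\dcal \Top$, hence equivalences. The only delicate point --- and the one place the particular groups enter --- is the cofibrancy of $\gcal_+$ in based spaces, which both licenses the transfer and forces cofibrant objects to be $\gcal$-free; for $\Sigma_n$ and $O(n)$ it is immediate, and everything else is routine transfer bookkeeping.
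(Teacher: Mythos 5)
Your proposal is correct and follows essentially the same route as the paper: the paper also obtains these model structures by starting from the non-equivariant stable model structures of \cite[Theorem 9.2]{mmss01} and transferring along the free functor $\gcal_+ \smashprod -$ via \cite[Theorem 11.3.2]{hir03}. Your write-up simply spells out the transfer hypotheses and the consequent properness, stability and $\gcal$-freeness claims in more detail than the paper does.
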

\begin{proof}
  The non-equivariant stable model structures $\wcal \Sp$ and 
  $\jcal_1 \Sp$  exist by \cite[Theorem 9.2]{mmss01}. The equivariant
  versions exist by applying the transfer argument of Hirschorn 
  \cite[Theorem 11.3.2]{hir03} to the free functor 
  $\gcal_+ \smashprod -$ from $\dcal \Sp$ to $\gcal \circlearrowleft \dcal \Sp$.
\end{proof}

The one-point compactification construction induces a functor of enriched categories
$S \co \jcal_1 \to \wcal$. The mapping spaces of $\jcal_1$ admit the
description  (see \cite[Definition 4.1]{mm02} for more details)
\[
\jcal_1(U,V) = O(V)_+ \smashprod_{O(V-U)} S^{V-U}
\]
Note that this requires us to choose some preferred inclusion $U \hookrightarrow V$.
We want to define a map of spaces $\jcal_1(U,V) \to \wcal(S^U,S^V)$, it suffices (due to adjointness)
to construct a map
\[
O(V)_+ \smashprod_{O(V-U)} S^{V-U} \smashprod S^U \to S^V.
\]
To do so, we simply compose the isomorphism $S^{V-U} \smashprod S^U \to S^V$
with the action map of $O(V)$ on $S^V$.

Hence precomposition with $S$ is a functor $S^* \co \wcal \Top \to \jcal_1 \Top$. This induces a functor between the respective categories of $\Sigma_n$-objects and $\Sigma_n$-equivariant maps.

\begin{lemma}\label{lem:squillen}
  The adjoint pair $(L_S, S^*)$ is a Quillen equivalence between
  $\Sigma_n\circlearrowleft \wcal \Sp$ and $\Sigma_n\circlearrowleft \jcal_1 \Sp$.
\end{lemma}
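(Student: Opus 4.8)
The plan is to deduce the equivariant statement from the non-equivariant comparison of \cite{mmss01} by a formal argument, using that the stable model structure on $\Sigma_n \lca \dcal \Sp$ (for $\dcal = \wcal$ or $\jcal_1$) is created by the forgetful functor $U \co \Sigma_n \lca \dcal \Sp \to \dcal \Sp$.

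First I would record the non-equivariant input. The enriched functor $S \co \jcal_1 \to \wcal$, $V \mapsto S^V$, induces the restriction $S^* \co \wcal \Sp \to \jcal_1 \Sp$ with left adjoint the prolongation $L_S$ (the pair $(\mathbb{P},\mathbb{U})$ of \cite{mmss01}), and $(L_S, S^*)$ is a Quillen equivalence of stable model structures -- one of the Quillen equivalences of \cite{mmss01}. Next, observe that a $\Sigma_n$-object in $\dcal\Sp$ is the same as a functor $B\Sigma_n \to \dcal\Sp$, that $S^*$ and $L_S$ on $\Sigma_n$-objects are postcomposition with $S^*$ and $L_S$, and that the stable model structure on $\Sigma_n \lca \dcal\Sp$ is the projective model structure on this diagram category, equivalently the one transferred along the free functor $(\Sigma_n)_+ \smashprod(-)$. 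Thus $U$ detects and preserves fibrations and weak equivalences, and -- being itself a left adjoint that sends the generating cofibrations $(\Sigma_n)_+ \smashprod j$ to the cofibrations $\bigvee_{\Sigma_n} j$ -- it also preserves cofibrations, hence cofibrant objects.

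It then remains to verify the Quillen-equivalence criterion, which may be done after applying $U$. One has $U S^* = S^* U$ and, by comparing right adjoints, $U L_S \cong L_S U$, so $(L_S, S^*)$ is a Quillen pair on $\Sigma_n$-objects; moreover $U$ carries units and counits of this adjunction to those of the non-equivariant one. Hence, for $X$ cofibrant in $\Sigma_n \lca \jcal_1\Sp$ and $Y$ fibrant in $\Sigma_n \lca \wcal\Sp$ (so that $UX$ is cofibrant and $UY$ fibrant), a map $L_S X \to Y$ is a weak equivalence iff $U$ of it is, iff -- by the non-equivariant Quillen equivalence -- its adjoint $UX \to S^* UY$ is, iff the adjoint $X \to S^* Y$ (of which $UX \to S^* UY$ is the underlying map) is a weak equivalence. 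Equivalently, one may regard the whole argument as an instance of the general fact that a Quillen equivalence between cofibrantly generated model categories induces a Quillen equivalence on projective functor categories. Since the substance is entirely imported from \cite{mmss01} and the rest is bookkeeping, I do not anticipate a real obstacle; the one point needing slight care is that $U$ preserves cofibrant objects, which is immediate from the explicit generating cofibrations of the transferred model structure.
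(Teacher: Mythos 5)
Your proposal is correct and follows essentially the same route as the paper: cite the non-equivariant Quillen equivalence of \cite{mmss01} and then observe that, since fibrations and weak equivalences in both equivariant categories are created by the forgetful functor (with generating cofibrations of the form $(\Sigma_n)_+ \smashprod j$, so cofibrant objects are also detected underneath), the Quillen-equivalence criterion can be checked after forgetting the action. The paper states this in two sentences; you have simply filled in the bookkeeping.
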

\begin{proof}
  The non-equivariant statement is \cite[Theorem 0.1]{mmss01}.
  The equivariant result follows from the fact that the fibrations (weak equivalences)
  of both model categories are defined by forgetting to the non-equivariant versions.
\end{proof}

We now compare $\Sigma_n\circlearrowleft \jcal_1 \Sp$ and $O(n) \circlearrowleft \jcal_1 \Sp$.
Given an object in $O(n) \circlearrowleft \jcal_1 \Sp$ we may forget some of the action
and obtain a $\Sigma_n$-object. We call this functor $i^*$. It has a left 
adjoint given by $O(n)_+ \smashprod_{\Sigma_n} -$. 
See \cite[Section 2]{mm02} for more details (in the terms of that reference, we are using the trivial universe $\rr^\infty$ for our equivariant spectra).

%\todo[inline, color=cyan!20, author=R]{(21.04.2015) Is it helpful here that it is both left and right, or is it just easy to prove that %it's both?}
%\todo[inline, color=yellow, author=Dave]{Seems like don't need that it is a left adjoint
%so I've shortened the proof accordingly. }

\begin{lemma}\label{lem:istarquillen}
  The functor $i^*$ is a right  Quillen functor (with respect to the
  stable model structures) from
  $O(n) \circlearrowleft \jcal_1 \Sp$ to $\Sigma_n\circlearrowleft \jcal_1 \Sp$.
\end{lemma}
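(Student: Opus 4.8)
The plan is to check the defining property of a right Quillen functor directly, taking advantage of the fact that in the stable model structures on both $O(n)\circlearrowleft\jcal_1\Sp$ and $\Sigma_n\circlearrowleft\jcal_1\Sp$ the fibrations and weak equivalences are created by the forgetful functor to non-equivariant $\jcal_1\Sp$, as recorded in the preceding proposition. First I would note that the adjunction is the expected one: $O(n)_+\smashprod_{\Sigma_n}(-)$ is left adjoint to $i^*$, with unit and counit the evident maps, which is exactly the content of \cite[Section 2]{mm02} once we restrict attention to the trivial universe $\rr^\infty$. So it is enough to verify that $i^*$ preserves fibrations and acyclic fibrations.

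The key observation is that $i^*$ only discards part of the $O(n)$-action and changes nothing about the underlying orthogonal spectrum: the underlying non-equivariant object of $i^* X$ in $\jcal_1\Sp$ is literally the same as that of $X$. By the description of the stable model structures, a map $f$ in $O(n)\circlearrowleft\jcal_1\Sp$ is a fibration (respectively a weak equivalence) precisely when its underlying map in $\jcal_1\Sp$ is, and the identical characterisation holds in $\Sigma_n\circlearrowleft\jcal_1\Sp$. Hence if $f$ is a fibration (respectively an acyclic fibration) then so is $i^* f$, and $i^*$ is a right Quillen functor. Equivalently, one can organise the same argument on the left adjoint: $O(n)_+\smashprod_{\Sigma_n}(-)$ sends the generating (acyclic) cofibrations $\Sigma_{n+}\smashprod\jcal_1(d,-)\smashprod i$ of $\Sigma_n\circlearrowleft\jcal_1\Sp$ to the generating (acyclic) cofibrations $O(n)_+\smashprod\jcal_1(d,-)\smashprod i$ of $O(n)\circlearrowleft\jcal_1\Sp$, using the canonical isomorphism $O(n)_+\smashprod_{\Sigma_n}(\Sigma_{n+}\smashprod Y)\cong O(n)_+\smashprod Y$.

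The only point requiring any care is bookkeeping: confirming that the equivariant stable structures on both sides are genuinely the ones transferred along the respective free functors $\gcal_+\smashprod(-)$, so that the phrase ``fibration/weak equivalence'' is detected non-equivariantly in a manner compatible with $i^*$. Once that is in place the statement is formal, so I do not expect a substantive obstacle here.
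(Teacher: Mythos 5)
Your argument is correct and is essentially the paper's own proof: since fibrations and weak equivalences in both equivariant stable model structures are detected by forgetting the group action entirely, and $i^*$ leaves the underlying non-equivariant spectrum unchanged, it preserves fibrations and acyclic fibrations. The extra remarks about the adjunction and the generating cofibrations are fine but not needed beyond this observation.
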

\begin{proof}
  That $i^*$ preserves fibrations and weak equivalences is immediate: both classes
  are defined by forgetting the group actions entirely.
%   To see that $i^*$ preserves cofibrations, consider a generating cofibration of
%   $O(n) \circlearrowleft \jcal_1 \Sp$:
%   \[
%   O(n)_+ \smashprod \jcal_1(U,-) \smashprod S^{n-1}_+ \longrightarrow
%   O(n)_+ \smashprod \jcal_1(U,-) \smashprod D^{n}_+
%   \]
%   We must check that this forgets to a cofibration of $\Sigma_n\circlearrowleft \jcal_1 \Sp$.
%   The space $i^*(O(n)_+)$ has no $\Sigma_n$-fixed points (other than the basepoint).
%   Hence
%   \[i^*(O(n)_+) \smashprod S^{n-1}_+ \longrightarrow
%   i^*(O(n)_+) \smashprod D^{n}_+
%   \]
%   is a cofibration of the free model structure on $\Sigma_n$-spaces (where weak equivalences and fibrations are defined by forgetting the group action).
%   The functor $\jcal_1(U,-) \smashprod -$ takes cofibrations of the
%   free model structure on $\Sigma_n$-spaces to cofibrations of
%   $\Sigma_n\circlearrowleft \jcal_1 \Sp$.
% The result follows.
\end{proof}

We have now shown that the top row of Figure \ref{fig:big} consists of
Quillen functors between model categories.

%%%%%%%%%%%%%%%%%%%%%%%%%%%%%%%%%%
%%%%%%%%%%%%%%%%%%%%%%%%%%%%%%%%%%
\subsection{Comparisons between the intermediate categories}
%%%%%%%%%%%%%%%%%%%%%%%%%%%%%%%%%%
%%%%%%%%%%%%%%%%%%%%%%%%%%%%%%%%%%
We now move on to the second row of Figure \ref{fig:big},
which compares the various intermediate categories.
\[
\xymatrix@C+0.8cm@R+1cm{
\devcat
\ar@<-5pt>[r]_{nS^*}
&
\ar@<-5pt>[l]_{L_{nS}}
\interorthdevcat
\ar@<+5pt>[r]^{O(n)_+ \smashprod_{\Sigma_n} (-)}
&
\ar@<+5pt>[l]^-{i^*}
\orthdevcat
\\
}
\]
This section is similar to the previous one, but with more complicated 
enriched categories $\jcal_n$ and $\wcal_n$, see 
Definition \ref{def:jcaln} and Definition \ref{def:wcaln}. 
  %\todo[inline, color=cyan!20, author=R]{(21.04.2015)We may just want to re-state these definitions} 
\begin{definition}\label{def:devorthcat}
We define $\devcat$ as the category of $\Sigma_n \lca \Top$-enriched functors from
$\wcal_n$ to $\Sigma_n \lca \Top$.
Similarly, $\orthdevcat$ is the category of $O(n) \lca \Top$-enriched functors from
$\jcal_n$ to $O(n) \lca \Top$.
By forgetting structure, we obtain a $\Sigma \lca \Top$-enriched category $i^* \jcal_n$.
We define $\interorthdevcat$ to be the category of $\Sigma_n \lca \Top$-enriched functors from $i^* \jcal_n$ to $\Sigma \lca \Top$.
\end{definition}

We first compare $\devcat$ and $\interorthdevcat$, via an adjunction induced by
 a map of enriched categories $nS \co i^\ast \jcal_n \to \wcal_n$, which we construct below.  

Recall (Definition \ref{def:jcaln}) that $\jcal_n(U,V)$ is defined as the Thom space of the vector bundle %$\gamma_n(U,V)$
\[
\gamma_n (U,V) = \{ (f,x) \ | \ f: U \to V, \  x \in \rr^n \otimes (V-f(U))\}
\]
over the space of linear isometries from $U$ to $V$, $\lcal(U,V)$.
Note that $\jcal_0(U,V)= \lcal(U,V)_+$.

%\todo[inline, color=yellow, author=D]{Altered the stuff about projection, should be clearer now}
%\todo[inline, color=cyan!20, author=R]{Nope, still confused. Possibly because following $l$ immediately by parentheses makes it look like it's a function. Let's chat about this on Tuesday and I'll see if I can re-write it then}

Projection onto factor $l$, i.e. the map from $\rr^n \to \rr$ via $(x_1, \dots x_n) \mapsto x_l$),
induces a morphism of vector bundles
$\gamma_n(U,V) \to \gamma_1(U,V)$. %Thus we have a map of vector bundles
%where the map into term $l$ is induced by projection onto factor $l$. 
This map is $\Sigma_n$-equivariant.
\[
\xymatrix{
\gamma_n(U,V) \ar[r] \ar[d] &
\gamma_1(U,V) \times \dots \times \gamma_1(U,V) \ar[d] \\
\lcal(U,V) \ar[r] &
\lcal(U,V) \times \dots \times \lcal(U,V)
}
\]
Taking the induced map on Thom spaces gives a $\Sigma_n$-equivariant
morphism as below, see \cite[Chapter 15, Proposition 1.5]{huse94}.
\[
i^* \jcal_n(U,V) \longrightarrow
\bigsmashprod{k=1}{n}  \jcal_1(U,V)
\]
Composing this map with the $n$-fold smash of the map $S$ gives a $\Sigma_n$-equivariant map
\[
i^* \jcal_n(U,V) \longrightarrow
\bigsmashprod{k=1}{n}  \wcal(S^U,S^V).
\]
Hence we have a map of $\Sigma_n \lca \Top$-categories $i^* \jcal_n \to \wcal_n$, which we call $nS$. 

%We are now ready to define the adjunctions used in the second row of Figure \ref{fig:big}, which can also be found at the beginning of this section.

\begin{definition}
Let $F$ be an object of $\devcat$, precomposing $F$ with $nS$ gives a functor
$ns^*F:=F \circ nS$ from $i^* \jcal_n$ to $\Sigma_n \lca \Top$.  Hence,
$nS^*$ is a functor from $\devcat$ to $\interorthdevcat$. This has a left adjoint given by the formula
\[
(L_{nS} E)(A) = \int^{V \in \jcal_0} E(V) \smashprod \wcal_n(S^V,A).
\]
\end{definition}

As with spectra, we have change of groups adjunctions, which are closely related
to the constructions of \cite[Section 2]{mm02}. Changing from 
$\Sigma_n$-equivariance to $O(n)$-equivariance gives us an adjunction between $\interorthdevcat$ and $\orthdevcat$.

\begin{definition}
Let $E \in \orthdevcat$, then 
$E$ has $O(n)$-equivariant structure maps 
\[
E_{U,V} \co \jcal_n(U,V) \to \Top(E(U),E(V)).
\]
We can forget structure to obtain a $\Sigma_n$-equivariant map
\[
i^* E_{U,V} \co i^* \jcal_,n(U,V) \to i^* \Top(E(U),E(V)) = \Top(i^* E(U),i^* E(V)).
\]
This gives a functor $i^* \co \orthdevcat \to \interorthdevcat$.

This functor has a left adjoint, given by applying $O(n) \smashprod_{\Sigma_n} -$ objectwise to
an object of $\interorthdevcat$: 
$(O(n) \smashprod_{\Sigma_n} F) (U) = O(n) \smashprod_{\Sigma_n} F(U) $.
The structure map is given below, where the first isomorphism is
a standard result about group actions and the second is $O(n)_+ \smashprod_{\Sigma_n}- $
applied to the structure map of $F$.
\[
O(n) \smashprod_{\Sigma_n} F (U) \smashprod \jcal_n (U,V)
\cong
O(n) \smashprod_{\Sigma_n} (F (U) \smashprod i^* \jcal_n (U,V) )
\to
O(n) \smashprod_{\Sigma_n} F (V)
\]
\end{definition}

Just as with $\devcat$ and $\orthdevcat$, we can put an objectwise and an $n$-stable
model structure on $\interorthdevcat$.

\begin{lemma}
  The categories $\interorthdevcat$, $\orthdevcat$ and $\devcat$ admit 
  \textbf{objectwise model structures}
  where the weak equivalences and fibrations are those maps $f$ such that
  $f(A)$ is a weak equivalence or fibrations of based (non-equivariant) spaces
  for each $A$ in $i^* \jcal_n$, $\jcal_n$ or $\wcal_n$. 
  These model structures are cofibrantly generated and proper. 
\end{lemma}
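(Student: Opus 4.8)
The plan is to obtain all three objectwise model structures at once by right transfer along a free--forgetful adjunction, in the same way that the projective model structure on a category of enriched functors is built. Write $\mathcal{D}$ for any one of $\devcat$, $\orthdevcat$ and $\interorthdevcat$, let $\gcal$ be the ambient group ($\Sigma_n$ or $O(n)$), and let $\mathcal{E}$ be the corresponding source category ($\wcal_n$, $\jcal_n$ or $i^* \jcal_n$); all of these are skeletally small, so fix a skeleton $\sk \mathcal{E}$. Composing the evaluation functors $\mathcal{D} \to \gcal \lca \Top$, one for each object of $\sk \mathcal{E}$, with the functor $\gcal \lca \Top \to \Top$ that forgets the group action yields a forgetful functor
\[
\mathbb{U} \co \mathcal{D} \longrightarrow \prod_{d \in \sk \mathcal{E}} \Top ,
\]
whose left adjoint $\mathbb{F}$ is the free enriched functor $\mathbb{F}(\{X_d\})(A) = \bigvee_{d}\, \mathcal{E}(d,A) \smsh \gcal_+ \smsh X_d$, with $\gcal$ acting diagonally on the factor $\mathcal{E}(d,A) \smsh \gcal_+$ and trivially on the $X_d$.

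I would then invoke the transfer theorem for cofibrantly generated model structures --- the same tool already used for the stable model structures above, \cite[Theorem 11.3.2]{hir03} --- declaring a map $f$ of $\mathcal{D}$ to be a fibration or a weak equivalence exactly when $\mathbb{U} f$ is one in $\prod_d \Top$, i.e.\ when $f(A)$ is a fibration, resp.\ a weak equivalence, of non-equivariant based spaces for each object $A$. The hypotheses to verify are: (i) $\prod_{d} \Top$ is cofibrantly generated with small domains for its generating cofibrations $I$ and generating acyclic cofibrations $J$, which is immediate from the corresponding facts for $\Top$; and (ii) every relative $\mathbb{F}(J)$-cell complex is carried by $\mathbb{U}$ to an objectwise weak equivalence. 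Step (ii) is the only part requiring genuine checking: one uses that each generating acyclic cofibration of $\Top$ is the inclusion of a deformation retract, that the functor $\mathcal{E}(d,-) \smsh \gcal_+ \smsh (-)$ preserves such inclusions objectwise, and that objectwise weak equivalences of $\Top$-valued functors are closed under pushout along objectwise cofibrations and transfinite composition; forgetting the $\gcal$-action changes nothing, since $\gcal_+ \smsh (-)$ preserves weak equivalences of spaces.

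The transfer then supplies cofibrantly generated model structures on all three categories, with generating sets $\mathbb{F}(I)$ and $\mathbb{F}(J)$. Right properness is immediate, since fibrations, weak equivalences and pullbacks are all formed objectwise and $\Top$ is right proper; left properness is verified exactly as for $\devcat$ and $\orthdevcat$ in \cite{barneseldred} and \cite{barnesoman13}, using that each map in $\mathbb{F}(I)$ evaluates to the smash of a generating cofibration of $\Top$ with a Thom space of a vector bundle, resp.\ a smash power of a mapping space of finite CW-complexes, which is cofibrant enough for pushout along it to preserve objectwise weak equivalences. For $\devcat$ and $\orthdevcat$ this simply reproduces the model structures of those references, so only $\interorthdevcat$ is new; but since $i^* \jcal_n$ is obtained from $\jcal_n$ and $\Sigma_n$-equivariance from $O(n)$-equivariance by forgetting, the $\interorthdevcat$ case is a word-for-word copy of the $\orthdevcat$ one. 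The main obstacle throughout --- confirming the acyclicity condition (ii), i.e.\ that freely generated relative $\mathbb{F}(J)$-cell complexes stay weak equivalences after forgetting the group actions and evaluating --- is exactly what the good point-set behaviour of $\Top$ (compactly generated, so that smashing with a fixed space and filtered colimits are homotopically innocuous) is there to take care of; everything else is formal.
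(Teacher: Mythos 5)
Your proposal is correct and follows essentially the same route the paper relies on: the objectwise (projective) model structures on these enriched functor categories are obtained by the transfer argument of Hirschhorn \cite[Theorem 11.3.2]{hir03} along the free--forgetful adjunction, exactly as the paper does for $\gcal \circlearrowleft \dcal \Sp$ and as in the cited references \cite{barneseldred} and \cite{barnesoman13}. The only (harmless) cosmetic difference is that you transfer in one step from $\prod_d \Top$ rather than first forming the non-equivariant functor category and then applying $\gcal_+ \smashprod -$.
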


\begin{proposition}
  Each of the categories $\interorthdevcat$, $\orthdevcat$ and $\devcat$ 
  admits an \textbf{$n$-stable model structure}
  which is the left Bousfield localisation of the objectwise model structure at the set of maps
  \[
  \begin{array}{rcl}
  i^* \jcal_n(U \oplus \rr,-) \smashprod S^n
  \longrightarrow
  i^* \jcal_n(U ,-) & \quad \textrm{for } & \interorthdevcat \\
  \jcal_n(U \oplus \rr,-) \smashprod S^n
  \longrightarrow
  \jcal_n(U ,-) & \quad \textrm{for } & \orthdevcat \\
  \wcal_n(X \smashprod S^1,-) \smashprod S^n
  \longrightarrow
  \wcal_n(X ,-) & \quad \textrm{for } & \devcat \\
  \end{array}
  \]
  as $U$ (or $X$) runs over the objects of a skeleton for $i^* \jcal_n$ (or $\wcal_n$).
  The fibrant objects of $\interorthdevcat$ are those $E$ such that
  $E(V) \to \Omega^n E(V \oplus \rr)$ is a weak equivalence of spaces
  for all $V \in i^* \jcal_n$. Similar statements hold for 
  $\orthdevcat$ and $\devcat$. 
  These model structures are proper, stable and cofibrantly generated.
\end{proposition}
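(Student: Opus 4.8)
The plan is to obtain all three model structures as a single left Bousfield localisation and then read off the remaining assertions; for $\devcat$ and $\orthdevcat$ this is carried out in \cite[Section 6]{barneseldred} and \cite[Section 6]{barnesoman13} respectively, and the point is that none of the arguments is sensitive to which of the enriched indexing categories $\wcal_n$, $\jcal_n$ or $i^* \jcal_n$ is used, so the statement for $\interorthdevcat$ follows by repeating them. For the purposes of this sketch write $\dcal$ for any one of these three indexing categories, $\gcal$ for the ambient group ($\Sigma_n$ or $O(n)$), $\dcal\Top$ for the category of $\gcal\lca\Top$-enriched functors from $\dcal$ to $\gcal\lca\Top$, and $S$ for the displayed set of localising maps.

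First I would check that the objectwise model structure on $\dcal\Top$ from the preceding lemma is left proper and cellular in the sense of \cite[Chapters 12--13]{hir03}. Left properness is inherited objectwise from $\Top$, which is proper; cellularity holds because $\dcal\Top$ is a category of enriched diagrams in $\Top$ whose generating cofibrations and generating acyclic cofibrations have the form $\gcal_+ \smsh \dcal(d,-) \smsh i$, with $d$ running over a skeleton of $\dcal$ and $i$ a generating (acyclic) cofibration of based spaces, so the smallness, compactness and effective-monomorphism hypotheses are verified one object at a time in $\Top$. Then \cite[Theorem 4.1.1]{hir03} produces the left Bousfield localisation $L_S(\dcal\Top)$: it is left proper and cofibrantly generated, shares its cofibrations with the objectwise structure, has the $S$-local equivalences as weak equivalences, and has as fibrant objects the objectwise-fibrant, $S$-local ones. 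This is the $n$-stable model structure; and since every object of $\dcal\Top$ is objectwise fibrant, fibrancy in $L_S(\dcal\Top)$ is exactly $S$-locality.

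Next I would unwind $S$-locality via the enriched Yoneda lemma. A representable $\dcal(d,-)$ is cofibrant in the objectwise structure and satisfies $\Map(\dcal(d,-),E) \simeq E(d)$, hence $\Map(\dcal(d,-)\smsh S^n, E) \simeq \Omega^n E(d)$, naturally in $d$. Applying this to the two ends of each localising map, $E$ is $S$-local precisely when the natural map $E(V) \to \Omega^n E(V\oplus\rr)$ (for $\orthdevcat$ and $\interorthdevcat$), respectively $E(X) \to \Omega^n E(X\smsh S^1)$ (for $\devcat$), is a weak equivalence of spaces for all $V$ (respectively $X$) in the chosen skeleton; since every object is isomorphic to one in the skeleton and both sides are functorial, it then holds for all objects. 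This is the stated description of the fibrant objects, and it exhibits them as the $\Omega^n$-spectrum objects of $\dcal\Top$.

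Finally there remain stability and right properness, which I would transcribe from \cite[Section 6]{barnesoman13} and \cite[Section 6]{barneseldred}. For stability one exploits the description of the fibrant objects: smashing a representable with $S^1$ reindexes it by one ``slot'', the $S$-local condition forces this reindexing to be undone by $\Omega^n$, and a short diagram chase on fibrant objects then shows that the derived functors $\Sigma$ and $\Omega$ are mutually inverse autoequivalences of $\ho(L_S(\dcal\Top))$; alternatively, stability is formal once one has the Quillen equivalence $L_S(\dcal\Top) \simeq \gcal\lca\Sp$, established later in this section (cf. \cite{barnesoman13, barneseldred}), since spectra are stable. Right properness is then deduced using stability --- so that a commuting square is a homotopy pullback iff it is a homotopy pushout --- together with the observation that an $S$-fibration is in particular an objectwise fibration between objectwise-fibrant objects, whence base change along it already computes homotopy fibres correctly; a homotopy-pushout square in which one edge is a weak equivalence has its opposite edge a weak equivalence, and this yields that base change along $S$-fibrations preserves $S$-local equivalences. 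I expect this last reconciliation --- matching homotopy pullbacks in the objectwise structure with those in $L_S(\dcal\Top)$ for squares whose vertices are not $L_S$-fibrant --- to be the only genuinely delicate point, and it is exactly the step handled in \cite{barnesoman13, barneseldred}.
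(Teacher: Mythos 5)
Your proposal is correct and takes essentially the same route as the paper, whose entire proof is a citation of \cite[Proposition 4.12]{barneseldred} and \cite[Proposition 7.14]{barnesoman13} together with the remark that the $\interorthdevcat$ case is similar; your sketch is a faithful expansion of the localisation argument carried out in those references (cellularity and left properness, Hirschhorn's localisation theorem, Yoneda identification of the local objects, with stability and right properness checked separately). The one point to treat with care is that in the equivariant objectwise structures, whose generating cofibrations are the free cells $\gcal_+ \smashprod \dcal(d,-) \smashprod i$, the bare representables $\dcal(d,-)$ need not be cofibrant, so the Yoneda computation of the homotopy function complexes identifying the $S$-local objects should be routed through a cofibrant replacement as in the cited references; this does not change the stated description of the fibrant objects.
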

\begin{proof}
  See \cite[Proposition 4.12]{barneseldred} and \cite[Proposition 7.14]{barnesoman13}
  for $\devcat$ and $\orthdevcat$. The case of $\interorthdevcat$ is similar. 
\end{proof}

The weak equivalences of these categories are called
$n \pi_*$-isomorphisms and are similar to $\pi_*$-isomorphisms of spectra.
For example, let $F  \in \devcat$, then the $n \pi_p$-group of $F$
is defined in terms of the colimit of the system below.
\[
\pi_{p+nk} (F(S^k) )
\lra 
\pi_{p+n{k+1}} (F(S^{k}) \smashprod S^n)
\lra
\pi_{p+n{k+1}} (F(S^{k+1}) )
\]
For more details, see \cite[Section 4.3]{barneseldred} and 
\cite[Section 7]{barnesoman13}.

% \todo[inline, color=cyan!20, author=R]{(21.04.2015)I do not understand your comment in this next todonote}
% \todo[inline, color=yellow]{(30.04.2015) Got confused, need to alter the redraft, have added a pdf note into the file (its just a typo)}

\begin{lemma}\label{lem:nsquillen}
  The adjoint pair $(L_{nS},nS^*)$ are a Quillen pair between the stable model structures
  on $\devcat$ and $\interorthdevcat$.
\end{lemma}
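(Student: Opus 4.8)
The plan is to follow the proof of Lemma \ref{lem:squillen}, with one extra layer: since the $n$-stable structures on $\devcat$ and $\interorthdevcat$ are left Bousfield localisations of the respective objectwise structures, I would first establish the objectwise version of the Quillen pair and then descend through the localisations using the standard criterion for localising a Quillen pair.

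First I would treat the objectwise statement. As $nS^*$ is precomposition with the enriched functor $nS \co i^* \jcal_n \to \wcal_n$, we have $(nS^* G)(U) = G(nS(U)) = G(S^U)$, and fibrations and weak equivalences in the objectwise structures are detected after evaluating at objects; hence $nS^*$ preserves objectwise (acyclic) fibrations, so $(L_{nS}, nS^*)$ is a Quillen pair for the objectwise structures. The identity is left Quillen from the objectwise to the $n$-stable structure on $\devcat$ (a left Bousfield localisation keeps the same cofibrations, and objectwise weak equivalences are in particular $n \pi_*$-isomorphisms), so composing shows that $(L_{nS}, nS^*)$ is a Quillen pair from $\interorthdevcat$ with the objectwise structure to $\devcat$ with the $n$-stable structure.

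Next I would descend the source to its $n$-stable localisation via \cite[Theorem 3.3.20]{hir03}: this requires only that $L_{nS}$ carry the localising maps of $\interorthdevcat$, namely the maps $i^* \jcal_n(U \oplus \rr, -) \smashprod S^n \lra i^* \jcal_n(U, -)$ for $U$ in a skeleton, to weak equivalences in $\devcat$ with the $n$-stable structure. These maps have cofibrant source and target in the objectwise structure, so the left derived functor of $L_{nS}$ agrees with $L_{nS}$ here. The Yoneda lemma together with the defining adjunction gives, for $G \in \devcat$,
\[
\Hom_{\interorthdevcat}(i^* \jcal_n(U,-), nS^* G) \;=\; G(S^U) \;=\; \Hom_{\devcat}(\wcal_n(S^U,-), G),
\]
so $L_{nS}(i^* \jcal_n(U,-)) = \wcal_n(S^U,-)$; since $L_{nS}$ commutes with smashing by a based space and $S^{U \oplus \rr} \cong S^U \smashprod S^1$, it sends the localising map above to $\wcal_n(S^U \smashprod S^1, -) \smashprod S^n \lra \wcal_n(S^U, -)$, which is the generating localising map of the $n$-stable structure on $\devcat$ for the object $X = S^U$ and therefore a weak equivalence there.

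The hard part is the claim in the last step that $L_{nS}$ sends the localising map of $\interorthdevcat$ to \emph{exactly} the localising map of $\devcat$, not merely to some map with the right source and target: this comes down to checking that $nS$ is compatible with the ``unit sphere'' morphisms $S^n \to i^* \jcal_n(U, U \oplus \rr)$ and $S^n \to \wcal_n(S^U, S^U \smashprod S^1)$ out of which the two Bousfield localisations are built. One verifies this by unwinding the construction of $nS$ — the bundle projection $\gamma_n \to \prod_{k} \gamma_1$ on Thom spaces, followed by the map $S \co \jcal_1 \to \wcal$ — and tracing how it acts on the distinguished fibre over the standard inclusion $U \hookrightarrow U \oplus \rr$. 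This is routine but is the one point that is not pure formalism; everything else is adjunction bookkeeping identical to the spectrum-level case.
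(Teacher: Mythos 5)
Your argument is correct, but it descends through the Bousfield localisations by the dual criterion to the one the paper uses. The paper first notes, as you do, that $nS^*$ preserves objectwise fibrations and weak equivalences, and then invokes the characterisation of the fibrant objects of the $n$-stable structure on $\devcat$ as those $F$ with $F(A) \to \Omega^n F(A \smashprod S^1)$ a weak equivalence for all $A$; since $nS^*F$ then visibly satisfies the corresponding $\Omega$-object condition in $\interorthdevcat$, the right adjoint preserves fibrant objects and the Quillen pair descends. You instead verify the left-adjoint condition: that $L_{nS}$ carries the localising maps $i^*\jcal_n(U \oplus \rr, -) \smashprod S^n \to i^*\jcal_n(U,-)$ to $n\pi_*$-isomorphisms, via the Yoneda identification $L_{nS}\bigl(i^*\jcal_n(U,-)\bigr) \cong \wcal_n(S^U,-)$. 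Both are standard sufficient conditions and both work here; the paper's route is shorter because the fibrant objects have already been identified in \cite{barneseldred}, whereas yours requires the extra (routine but genuinely non-formal, as you rightly flag) check that the Yoneda isomorphism intertwines the two structure maps $S^n \to i^*\jcal_n(U, U\oplus\rr)$ and $S^n \to \wcal_n(S^U, S^U \smashprod S^1)$. In compensation, your computation of $L_{nS}$ on representables is of independent use (it is essentially the input needed for Remark \ref{rmk:directproof} and for the Quillen-equivalence statement that follows), so nothing is wrong --- it is simply more work than the paper's one-line appeal to fibrant objects.
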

\begin{proof}
  It is clear that $nS^*$ preserves objectwise fibrations and weak equivalences.
  In \cite{barneseldred} the fibrant objects of the $n$-stable model structure on
  $\devcat$ are identified as those $F$ such that
  $F(A) \to \Omega^n F(A \smashprod S^1)$ is a weak equivalence for all $A \in \wcal$.
  It is clear that for such an $F$, $S^* F$ is fibrant in $\interorthdevcat$.
  It follows that we have a Quillen pair between the $n$-stable model structures.
\end{proof}

\begin{lemma}
  The functor $i^*$ is a right Quillen functor.
\end{lemma}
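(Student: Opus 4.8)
The plan is to mimic the proof of Lemma \ref{lem:istarquillen} for spectra, since the situation is entirely analogous: $i^*$ is the forgetful functor from $O(n)$-equivariant objects to $\Sigma_n$-equivariant objects, applied objectwise to functors out of the enriched categories $\jcal_n$ and $i^*\jcal_n$. First I would recall that $i^* \co \orthdevcat \to \interorthdevcat$ has been shown to have a left adjoint $O(n)_+ \smashprod_{\Sigma_n} (-)$ (constructed objectwise in the preceding definition), so it remains only to check that $i^*$ preserves fibrations and acyclic fibrations, equivalently that it preserves fibrations and weak equivalences.

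The key point is that, just as for the $n$-stable model structures on $\devcat$ and $\orthdevcat$, the weak equivalences and fibrations of both $\interorthdevcat$ and $\orthdevcat$ are detected after forgetting the group action entirely: the weak equivalences are the $n\pi_*$-isomorphisms of the underlying (non-equivariant) objects of $\jcal_n \Top$, and the fibrations are the maps whose underlying non-equivariant maps are fibrations in the $n$-stable model structure on $\jcal_n \Top$ (this is how the equivariant $n$-stable model structures are built, by the transfer argument along $\gcal_+ \smashprod (-)$, exactly as in the proof for spectra). Since $i^*$ is compatible with the further forgetful functors down to $\jcal_n \Top$ — forgetting the $O(n)$-action and then remembering nothing is the same as forgetting the $\Sigma_n$-action after $i^*$ — it follows immediately that $i^*$ sends fibrations to fibrations and weak equivalences to weak equivalences, so it is a right Quillen functor.

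The only mild subtlety, and the step I would be most careful about, is making sure the model structure on $\interorthdevcat$ really is transferred in this way: the $n$-stable model structure on $\interorthdevcat$ was defined as a left Bousfield localisation of the objectwise model structure, so I should confirm that a map is a fibration (resp. weak equivalence) in the localised structure precisely when its underlying non-equivariant map is, which holds because the localising maps for $\interorthdevcat$ are the $i^*$-images of the localising maps for $\orthdevcat$ (and the $\Sigma_n$-equivariant objectwise fibrations are those that are objectwise fibrations of underlying spaces). Given that, the proof is a one-line observation, so I would write: \emph{That $i^*$ preserves fibrations and weak equivalences is immediate, since both classes are defined by forgetting the group actions entirely, and $i^*$ is compatible with this forgetting; hence $i^*$ is a right Quillen functor.}
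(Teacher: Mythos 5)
Your proposal is correct and follows essentially the same route as the paper, whose proof is the same one-line observation: $i^*$ preserves objectwise fibrations and weak equivalences since these are detected on underlying non-equivariant spaces, and it also preserves the $n\pi_*$-isomorphisms of the $n$-stable (localised) structures. Your extra care about the Bousfield localisation is reasonable, though note the paper defines these $n$-stable structures as left Bousfield localisations rather than by transfer, so the cleanest justification for the localised fibrations is that $i^*$ preserves the fibrant objects (the characterisation $E(V)\to\Omega^n E(V\oplus\R)$ being a weak equivalence is independent of the group action) together with the standard criterion for right Quillen functors into localisations.
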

\begin{proof}
  The functor $i^*$ preserves objectwise fibrations and weak equivalences.
  It is easy to check that it also preserves the weak equivalences
  of the $n$-stable model structure. 
\end{proof}

%%%%%%%%%%%%%%%%%%%%%%%%%%%%%%%%%%
%%%%%%%%%%%%%%%%%%%%%%%%%%%%%%%%%%
\subsection{Squares 1 and 2 commute}\label{subsec:square1and2}
%%%%%%%%%%%%%%%%%%%%%%%%%%%%%%%%%%
%%%%%%%%%%%%%%%%%%%%%%%%%%%%%%%%%%

We show that Squares 1 and 2 of Figure \ref{fig:big} (shown again below) commute.
We have already defined the model categories and horizontal adjunctions,
the vertical adjunctions are defined later in this section.
\[
\xymatrix@C+0.8cm@R+1cm{
\Sigma_n\circlearrowleft \wcal \Sp
\ar@<-5pt>[r]_{S^*}
\ar@<+5pt>[d]^-{\mu_n^\ast}
\ar @{} [dr] |{1}
&
\ar@<-5pt>[l]_{L_S}
\Sigma_n\circlearrowleft \jcal_1 \Sp
\ar@<+5pt>[d]^-{\alpha_n^\ast}
\ar@<+5pt>[r]^{O(n)_+ \smashprod_{\Sigma_n} (-)}
\ar @{} [dr] |{2}
& 
\ar@<+5pt>[l]^-{i^*}
O(n) \circlearrowleft \jcal_1 \Sp
\ar@<+5pt>[d]^-{\alpha_n^\ast}
\\
\ar@<+5pt>[u]^-{\wcal \smashprod_{\wcal_n} -}
\devcat
\ar@<-5pt>[r]_{nS^*}
&
\ar@<-5pt>[l]_{L_{nS}}
\ar@<+5pt>[u]^-{i^* \jcal_1 \smashprod_{i^* \jcal_n} -}
\interorthdevcat
\ar@<+5pt>[r]^{O(n)_+ \smashprod_{\Sigma_n} (-)}
&
\ar@<+5pt>[u]^-{\jcal_1 \smashprod_{\jcal_n} -}
\ar@<+5pt>[l]^-{i^*}
\orthdevcat
}
\]
To prove the commutativity , we will make use of a commutative square of 
$\Sigma_n \lca \Top$-enriched categories. 
We will give the diagram first, then define the morphisms
used.
\[
\xymatrix@C+1cm{
\jcal_0 \ar[r]^{i_n} \ar[d]^{S}
\ar@/^20pt/[rr]^{\R^n \ox-} &
\jcal_n
\ar[r]^{\alpha_n} \ar[d]^{nS} &
\jcal_1
\ar[d]^{S}\\
\wcal
\ar[r]^{\mapdiag}\ar@/_20pt/[rr]_{(-)^n} &
\wcal_n
\ar[r]^{\mu_n} &
\wcal \\
}
\]

On objects the map $i_n \co \jcal_0 \to \jcal_n$ is the identity, on morphism spaces it is induced by the morphism $\ical(U,V) \to \gamma_n(U,V)$
which sends $f$ to the pair $(f,0)$.

The map $\alpha_n$ sends a vector space
$U$ to $nU= \rr^n \otimes U$. On morphism spaces $\alpha_n$ is induced by the map which takes $(f,x) \in \gamma_n(U,V)$ to
$(n f, x) \in \gamma_1(nU,nV)$.

The morphism $\mapdiag$ is the identity on objects and sends
$\wcal(A,B)$ to $\wcal_n(A,B)$ by the diagonal. The morphism $\mu_n$
sends $A$ to $A^{\smashprod n}$ and on morphism spaces acts as the smash product.

On objects, the diagram clearly commutes up to natural isomorphism. We must now show that the following diagram of morphism spaces commutes.
\[
\xymatrix@C+1cm{
\jcal_0(U,V) \ar[r]^{i_n} \ar[d]^{S}
\ar@/^20pt/[rr]^{\R^n \ox-} &
\jcal_n(U,V)
\ar[r]^{\alpha_n} \ar[d]^{nS} &
\jcal_1 (nU,nV)
\ar[d]^{S}\\
\wcal (S^U,S^V)
\ar[r]^{\mapdiag}\ar@/_20pt/[rr]_{(-)^n} &
\wcal_n (S^U,S^V)
\ar[r]^{\mu_n} &
\wcal (S^{nU},S^{nV})\\
}
\]

%\todo[inline, color=cyan!20, author=R]{(21.04.2015)I have a feeling we'll be asked to delete the next half-page's worth}

%\todo[inline, color=yellow, author=D]{(30.04.2015) Yeah, it is an exaplanation that is really messy. can't think of a better explanation. }

Starting with some map $f \in \jcal_0(U,V)$,
it is easily checked that
\[
nS(i_n f) = (f, \dots, f) = \mapdiag (Sf)
\]
and hence the first square commutes.
The second square is more complicated, but it 
suffices to consider a point of 
$(f,x) \in \gamma_n(U,V)$. 
We have the following equality of maps
from $S^{\rr^n \otimes U}$ to $S^{\rr^n \otimes V}$,
where $f \co U \to V$ is an isometry and $x \in \rr^n \otimes (V - f(U)$.
\[
S(\alpha_n (f,x)) = S((nf,x)) 
\]
The above terms are equal to the one-point compactification of the following map
\[
\begin{array}{rcl}
(\rr^n \otimes f)(-) + x  \co  nU & \to & nV \\
(u_1 \oplus \dots \oplus u_n)
& \mapsto &
(f(u_1) \oplus \dots \oplus f(u_n)) + x
\end{array}
\]
Now we consider the other direction around the square.
Start with a pair $(f,x)$ as before, then we may write
$x=x_1 \oplus \dots \oplus x_n$, using the projection of $\rr^n$
onto its $n$ factors of $\rr$. We then see that
\[
nS((f,x))= (S(f,x_1) , \dots , S(f,x_n) ) \in \wcal_n(S^U,S^V)
\]
Applying $\mu_n$ to this element gives the map below.
\[
(u_1 \oplus \dots \oplus u_n) \mapsto
f(u_1) + x_1 \oplus \dots \oplus f(u_n) + x_n
\]
It is now clear that the second square commutes.

We define the vertical adjunctions. The three pairs are defined similarly.
\begin{definition}
  The functors
  \[
  \begin{array}{rcl}
  \alpha_n^* \co \Sigma_n \lca \jcal_1 \Sp &\lra& \interorthdevcat \\
  \alpha_n^* \co O(n) \lca \jcal_1 \Sp &\lra& \orthdevcat \\
  \mu_n^* \co \Sigma_n \lca \wcal \Sp &\lra& \devcat 
  \end{array}
  \]  
  are defined as precomposition with $\alpha_n$ (or $\mu_n$) along with a change of group
  action. For more details, see  
 \cite[Definition 5.1]{barneseldred} and \cite[Definition 8.2]{barnesoman13}.
\end{definition}

The functor $\alpha_n^*$ has a left adjoint,
${\jcal_1 \smashprod_{\jcal_n} -}$. On an object $E$
of $\interorthdevcat$ it is given by the formula
\[
({\jcal_1 \smashprod_{\jcal_n} -} E)(V)
= \int^{U \in \jcal_n} E(U) \smashprod \jcal_1(U,nV).
\]
We let $O(n)$ act on $nV = \rr^n \otimes V$ via
the standard action on $\rr^n$. This induces an action on 
$\jcal_1(U,nV)$ and we let $O(n)$
act diagonally on the smash product
$E(U) \smashprod \jcal_1(U,nV)$. 
Equally we can define $i^* \jcal_1 \smashprod_{i^* \jcal_n} -$.
The functor $\mu_n^*$ has a left adjoint 
$\wcal_n \smashprod_{\wcal} -$  given by a similar formula.

\begin{proposition}\label{prop:alphan}  
  The adjoint pairs $(\wcal_n \smashprod_{\wcal} -, \mu_n^*)$
  and $(i^* \jcal_1 \smashprod_{i^* \jcal_n} -, \alpha_n^*)$
  are Quillen equivalences.
\end{proposition}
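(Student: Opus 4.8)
The plan is to treat the two adjoint pairs in turn. The pair with right adjoint $\mu_n^* \co \Sigma_n \lca \wcal \Sp \to \devcat$ (between the stable and the $n$-stable model structures) is one of the comparisons already built in the prequel: $\mu_n^*$ is the functor of \cite[Definition 5.1]{barneseldred}, and its being the right half of a Quillen equivalence is part of the chain of Quillen equivalences constructed in \cite{barneseldred} that identifies $\devcat$ with spectra carrying a $\Sigma_n$-action, so for this pair there is nothing new to do beyond citing that reference. The substance of the proposition is therefore the second pair, with right adjoint $\alpha_n^* \co \Sigma_n \lca \jcal_1 \Sp \to \interorthdevcat$.

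The strategy for this pair is to rerun the proof of its $O(n)$-equivariant analogue --- the Quillen equivalence with right adjoint $\alpha_n^* \co O(n) \lca \jcal_1 \Sp \to \orthdevcat$ established in \cite{barnesoman13} --- replacing $O(n)$ throughout by its subgroup $\Sigma_n$, acting on $\rr^n$, and hence on everything in sight, by restriction. First I would check that this is a Quillen pair, by the same reasoning as in the $O(n)$-case: $\alpha_n^*$ (precomposition with $\alpha_n$, suitably equivariant) preserves objectwise fibrations and weak equivalences, since these are detected by forgetting the group action, and a short localisation argument --- or direct inspection, using that an $\Omega$-spectrum $X$ gives an object with $(\alpha_n^*X)(U) \simeq \Omega^n (\alpha_n^*X)(U\oplus\rr)$ --- upgrades this to the $n$-stable structures. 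Next I would verify that the derived unit and counit are $n\pi_*$-isomorphisms; this is where the genuine homotopy-theoretic content lies. The point is that restriction along the functor of $\Sigma_n \lca \Top$-enriched categories $\alpha_n \co i^* \jcal_n \to \jcal_1$, $U \mapsto \rr^n \otimes U$, is a Quillen equivalence on $n$-stable module categories: on morphism spaces $\alpha_n$ compares $\jcal_n(U,V)$ with $\jcal_1(nU,nV)$, the maps generating the $n$-stable localisations are assembled from exactly these comparisons, and stabilising a tower of spaces with respect to $S^n$ (with its $\Sigma_n$-action) produces the same stable homotopy theory as stabilising with respect to $S^1$, a standard fact about spectra (see \cite{mmss01}), with the $\Sigma_n$-action merely carried along for the ride. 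This is precisely the computation carried out in \cite{barnesoman13} for $O(n)$, and it uses nothing about $O(n)$ that is not shared by the finite group $\Sigma_n$: the generating (acyclic) cofibrations are of the standard free form, depending on the ambient group only through a smash factor $\gcal_+$, and the $n$-stable localising maps simply carry whichever group --- $O(n)$ or $\Sigma_n$ --- acts on the smash factor $S^n$.

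Accordingly, the main obstacle is not the (routine) bookkeeping with group actions but this last comparison: that $\alpha_n$ is, in the appropriate stable sense, an equivalence of enriched categories, equivalently that suspension by $S^n$ and by $S^1$ give the same stable homotopy theory in the enriched, equivariant setting at hand. Since that comparison has already been carried out for $O(n)$ in \cite{barnesoman13} and is, step for step, insensitive to the choice of group, the remaining work really is just to observe that the argument transports verbatim to $\Sigma_n$. One might instead hope to deduce the $\Sigma_n$-statement formally from the $O(n)$-statement using the commuting square of enriched categories displayed above together with the change-of-groups adjunctions; but forgetting an $O(n)$-action to a $\Sigma_n$-action is not itself a Quillen equivalence, so this route does not shorten the argument, and I would carry out the direct adaptation.
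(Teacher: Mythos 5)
Your proposal matches the paper's proof, which simply cites \cite[Proposition 5.4]{barneseldred} for the $(\wcal_n \smashprod_{\wcal} -, \mu_n^*)$ pair and states that the second pair follows ``by a similar argument as in \cite[Section 8]{barnesoman13}'' --- i.e.\ exactly the adaptation of the $O(n)$-equivariant argument to $\Sigma_n$ that you describe. Your write-up in fact supplies more justification than the paper does for why that adaptation is insensitive to the choice of group.
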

\begin{proof}
  The first adjunction is a Quillen pair by  \cite[Proposition 5.4]{barneseldred}.
  The second by a similar argument as in \cite[Section 8]{barnesoman13}.
\end{proof}

\begin{proposition}
  The square labelled 1 in Figure \ref{fig:big} 
  (displayed at the beginning of this section)
  commutes up to natural isomorphism:
  $\alpha_n^* \circ S^* \cong nS^* \circ \mu_n^*$.
  The square of left adjoints also commutes.
\end{proposition}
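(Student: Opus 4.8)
The plan is to reduce the statement to the commutativity --- established just above --- of the right-hand square of the diagram of $\Sigma_n \lca \Top$-enriched categories, i.e.\ to the identity $S \circ \alpha_n = \mu_n \circ nS$ of enriched functors together with the canonical isomorphism $S^{\rr^n \ox U} \cong (S^U)^{\smashprod n}$ of objects. First I would unwind both composites on an arbitrary $X \in \Sigma_n \lca \wcal \Sp$. By definition $S^*$ is precomposition with $S \co \jcal_1 \to \wcal$ carrying the $\Sigma_n$-action along unchanged, and $\alpha_n^*$ is precomposition with $\alpha_n \co \jcal_n \to \jcal_1$ together with the change of group action of \cite[Definition 5.1]{barneseldred}, so $\alpha_n^* S^* X$ is the object $U \mapsto X\big(S^{\rr^n \ox U}\big)$ of $\interorthdevcat$. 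Dually, $\mu_n^*$ is precomposition with $\mu_n \co \wcal_n \to \wcal$ (with its change of group action) and $nS^*$ is precomposition with $nS \co i^* \jcal_n \to \wcal_n$, so $nS^* \mu_n^* X$ is the object $U \mapsto X\big((S^U)^{\smashprod n}\big)$. Note that on objects the two agree only up to the canonical isomorphism, which is why one gets commutativity up to natural isomorphism rather than on the nose.

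Next I would produce the natural isomorphism between these two objects. The identification $\rr^n \ox U \cong U^{\oplus n}$ gives $S^{\rr^n \ox U} \cong (S^U)^{\smashprod n}$, hence an isomorphism $X\big(S^{\rr^n \ox U}\big) \cong X\big((S^U)^{\smashprod n}\big)$ which is evidently natural in $U$ and in $X$. The substantive point --- and the step I expect to be the main obstacle --- is to check that this is an isomorphism in $\interorthdevcat$, i.e.\ that it respects the two $\Sigma_n$-actions and the enriched structure maps. For the actions: on the right $\Sigma_n$ permutes the $n$ smash factors of $(S^U)^{\smashprod n}$, while on the left $\Sigma_n \subset O(n)$ permutes the $n$ summands of $U^{\oplus n} = \rr^n \ox U$; these match under the identification, and this is precisely the $\Sigma_n$-equivariance that was verified for the right-hand (``second'') square of the enriched-category diagram above. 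Compatibility with the structure maps of the $\jcal_1$- and $\wcal$-spectra is automatic, since $S$, $\alpha_n$, $nS$ and $\mu_n$ are maps of enriched categories and all four functors act by precomposition levelwise. This yields $\alpha_n^* \circ S^* \cong nS^* \circ \mu_n^*$.

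Finally, for the square of left adjoints I would invoke uniqueness of adjoints rather than repeat the computation. The left adjoint of $\alpha_n^* \circ S^*$ is $L_S \circ (i^* \jcal_1 \smashprod_{i^* \jcal_n} -)$ and the left adjoint of $nS^* \circ \mu_n^*$ is $(\wcal \smashprod_{\wcal_n} -) \circ L_{nS}$; since adjoints are unique up to canonical natural isomorphism, the natural isomorphism of right adjoints just constructed has a mate which is a natural isomorphism between these two composites, which is exactly the asserted commutativity of the square of left adjoints. Beyond the equivariance check flagged above, everything here is a formal unwinding of the definitions in \cite[Definition 5.1]{barneseldred} and \cite[Definition 8.2]{barnesoman13} and of the adjunctions of Lemma \ref{lem:squillen}, Lemma \ref{lem:nsquillen} and Proposition \ref{prop:alphan}.
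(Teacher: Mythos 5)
Your proposal is correct and follows essentially the same route as the paper: both reduce the claim to the already-verified identity $S \circ \alpha_n = \mu_n \circ nS$ of enriched functors, observe that all four functors in the square are precompositions (so the composites agree up to the canonical identification $S^{\rr^n \otimes U} \cong (S^U)^{\smashprod n}$), check compatibility of the $\Sigma_n$-actions, and obtain the square of left adjoints by uniqueness of adjoints. The paper's proof is just a terser version of the same argument.
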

\begin{proof}
  We have shown above that $S \circ \alpha_n = \mu_n \circ nS$.
  Since the right adjoints are all defined in terms of precomposition
  the result follows. We note that $\alpha_n$ and $\mu_n$ also change
  the way the group $\Sigma_n$ acts, but it is easily checked that
  they do so in compatible ways.
  \end{proof}

\begin{corollary}
  The adjoint pair $(L_{nS}, nS^*)$ is a Quillen equivalence.
\end{corollary}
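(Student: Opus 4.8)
The plan is to exploit the commuting Square 1 of Figure \ref{fig:big}: three of its four adjunctions have already been shown to be Quillen equivalences, and the missing corner is precisely $(L_{nS}, nS^*)$. Concretely, Lemma \ref{lem:squillen} gives that the top horizontal pair $(L_S, S^*)$ is a Quillen equivalence, and Proposition \ref{prop:alphan} gives that the two vertical pairs $(\wcal_n \smashprod_{\wcal} -, \mu_n^*)$ and $(i^* \jcal_1 \smashprod_{i^* \jcal_n} -, \alpha_n^*)$ are Quillen equivalences; meanwhile Lemma \ref{lem:nsquillen} says that $(L_{nS}, nS^*)$ is at least a Quillen adjunction. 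So the first move is to reduce the corollary to showing that the total right derived functor $R(nS^*) \co \ho(\devcat) \to \ho(\interorthdevcat)$ is an equivalence of homotopy categories, this being the standard criterion for a Quillen adjunction to be a Quillen equivalence.

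Then I would pass the point-set commutativity of Square 1 to homotopy categories. Since $S^*$, $\mu_n^*$ and $\alpha_n^*$ are right Quillen functors they preserve fibrant objects, so the natural isomorphism $\alpha_n^* \circ S^* \cong nS^* \circ \mu_n^*$ proved immediately above descends to a natural isomorphism of total right derived functors
\[
R\alpha_n^* \circ R S^* \;\cong\; R(nS^*) \circ R\mu_n^* \co \ho(\Sigma_n \lca \wcal \Sp) \lra \ho(\interorthdevcat),
\]
using the usual compatibility of passage to derived functors with composition (cf. \cite{hir03}). The three Quillen equivalences make $RS^*$, $R\mu_n^*$ and $R\alpha_n^*$ into equivalences of homotopy categories; hence the left-hand composite is an equivalence, so the right-hand composite $R(nS^*) \circ R\mu_n^*$ is an equivalence, and cancelling the equivalence $R\mu_n^*$ on the right shows that $R(nS^*)$ is an equivalence. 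That is exactly what is required, so $(L_{nS}, nS^*)$ is a Quillen equivalence.

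I do not expect any genuine obstacle. The single point that deserves a line of care is the claim that point-set commutativity of Square 1 descends to derived functors; but this is immediate because $S^*$, $\mu_n^*$ and $\alpha_n^*$ all preserve fibrant objects, so a single fibrant replacement simultaneously computes both composite derived functors. (Equivalently, one could run the same argument with the left adjoints, using that the square of left adjoints commutes as well.) All of the substance is already carried by Lemma \ref{lem:squillen}, Proposition \ref{prop:alphan}, Lemma \ref{lem:nsquillen}, and the commutativity of Square 1; the corollary is just the formal pasting of these.
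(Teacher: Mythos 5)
Your proposal is correct and is essentially the paper's own argument: the paper likewise cites Lemma \ref{lem:nsquillen} for the Quillen pair and then deduces the equivalence from the fact that the other three adjunctions in the commuting Square 1 are Quillen equivalences (Lemma \ref{lem:squillen}, Proposition \ref{prop:alphan}). You have merely spelled out the two-out-of-three cancellation on derived functors that the paper leaves implicit.
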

\begin{proof}
  This is a Quillen pair by Lemma \ref{lem:nsquillen}. The other
  adjunctions in the square are Quillen equivalences,
  see   Lemma \ref{lem:squillen},
  Proposition \ref{prop:alphan} and
  \cite[Proposition 5.4]{barneseldred}.
  Hence this Quillen pair is a Quillen equivalence.
\end{proof}

% We prove that the following (labelled 2 in Figure \ref{fig:big}) commutes. 
% \[
% \xymatrix@C+0.8cm@R+1cm{
% \Sigma_n\circlearrowleft \jcal_1 \Sp
% \ar@<+5pt>[r]^{O(n)_+ \smashprod_{\Sigma_n} (-)}
% \ar@<+5pt>[d]^-{\alpha_n^\ast}
% \ar @{} [dr] |{2}
% &
% \ar@<+5pt>[l]^-{i^*}
% O(n) \circlearrowleft \jcal_1 \Sp
% \ar@<+5pt>[d]^-{\alpha_n^\ast}
% \\
% \ar@<+5pt>[u]^-{i^* \jcal_1 \smashprod_{i^* \jcal_n} -}
% \interorthdevcat
% \ar@<+5pt>[r]^{O(n)_+ \smashprod_{\Sigma_n} (-)}
% &
% \ar@<+5pt>[u]^-{\jcal_1 \smashprod_{\jcal_n} -}
% \ar@<+5pt>[l]^-{i^*}
% \orthdevcat
% }
% \]

\begin{lemma}
  The square labelled 2 in Figure \ref{fig:big} (also displayed at the beginning of this section)
  commutes up to natural isomorphism. That is,
  $\alpha_n^* \circ i^* \cong i^* \circ \alpha_n^*$.
  The square of left adjoints also commutes.
\end{lemma}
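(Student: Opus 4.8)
The plan is to prove this by unravelling the definitions of the four functors, exactly as in the proof that Square~$1$ of Figure~\ref{fig:big} commutes; the only difference is that here the relevant bookkeeping is of group actions rather than of source categories. Both vertical functors $\alpha_n^*$ are, by definition, precomposition with the enriched functor $\alpha_n$ (which sends $U$ to $\R^n \ox U$) together with a change of group action; both horizontal functors $i^*$ are restriction of the group action along the inclusion of $\Sigma_n$ into $O(n)$ as the group of permutation matrices, leaving the underlying functor and the (now merely $\Sigma_n$-equivariant) enriched structure maps untouched. So the first step is to write these descriptions out explicitly, paying attention to how the action on the $\R^n$-factor of $\R^n \ox V$ feeds into each $\alpha_n^*$.

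The second step is to trace a general object $\Theta$ of $O(n) \lca \jcal_1 \Sp$ around the square. Applying $i^*$ and then $\alpha_n^*$ produces the functor $V \mapsto \Theta(\R^n \ox V)$, carrying the $\Sigma_n$-action which is the diagonal of the restricted $O(n)$-action on $\Theta$ and the coordinate-permutation action on $\R^n \ox V$, with structure maps obtained by restricting those of $\Theta$ along $\alpha_n$. Applying $\alpha_n^*$ and then $i^*$ produces the \emph{same} underlying functor, now carrying the restriction to $\Sigma_n$ of the diagonal of the $O(n)$-action on $\Theta$ and the \emph{standard} $O(n)$-action on $\R^n \ox V$. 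These two $\Sigma_n$-actions coincide: the restriction of the standard $O(n)$-action on $\R^n$ along $\Sigma_n \hookrightarrow O(n)$ is precisely the coordinate-permutation action, and restriction of group actions commutes with precomposition along $\alpha_n$ (these operate on independent variables), so the structure maps agree too. The identification is manifestly natural in $\Theta$, yielding $\alpha_n^* \circ i^* \cong i^* \circ \alpha_n^*$ (in fact an equality, once the conventions for restriction along $\Sigma_n \hookrightarrow O(n)$ are fixed).

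For the square of left adjoints I would argue formally. The two legs of the right-adjoint square are the composites of right adjoints $\alpha_n^* \circ i^*$ and $i^* \circ \alpha_n^*$, so each has a left adjoint given by the corresponding composite of left adjoints, namely $(O(n)_+ \smashprod_{\Sigma_n} (-)) \circ (i^* \jcal_1 \smashprod_{i^* \jcal_n} (-))$ and $(\jcal_1 \smashprod_{\jcal_n} (-)) \circ (O(n)_+ \smashprod_{\Sigma_n} (-))$ respectively; since naturally isomorphic functors have naturally isomorphic left adjoints, these two composites are naturally isomorphic, which is exactly the commutativity of the left-adjoint square. I expect the main obstacle to be the second step: keeping the several group actions and the enriched structure maps straight under all the restrictions and diagonals. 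But the only genuine input is the compatibility of $\Sigma_n \hookrightarrow O(n)$ with the permutation action on $\R^n$, so this is routine, and entirely parallel to the verification already done for Square~$1$.
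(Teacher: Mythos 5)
Your proposal is correct and follows essentially the same route as the paper: the paper's proof simply evaluates both composites on an object $E$ of $O(n)\circlearrowleft \jcal_1\Sp$, observes that each yields $V \mapsto i^*E(nV)$ with the group action altered compatibly, and leaves the left-adjoint square to the standard uniqueness-of-adjoints argument you give. Your version is a more explicit spelling-out of the same check (in particular of the point that restricting the standard $O(n)$-action on $\R^n$ along $\Sigma_n \hookrightarrow O(n)$ gives the permutation action), which is exactly the content the paper compresses into "with the action altered similarly."
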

\begin{proof}
  Let $E \in O(n) \jcal_1 \Sp$. Then $(i^* \alpha_n^* E)(V)
  = i^* E(nV)$, where we have altered the action to involve
  the action of $O(n)$ on $nV$. We also have $(\alpha_n^* i^* E)(V)
  = i^* E(nV)$, with the action altered similarly.
\end{proof}

\subsection{Square 3 commutes}\label{subsec:square3}

%\todo[inline, color=yellow, author=Dave]{Haven't defined $\ind_0^n$ or $\diff_n$
%properly. But don't need to.}

Now we can describe the extent to which the square below (labelled 3 in Figure \ref{fig:big})
commutes. Recall that $\res_0^n: = i_n^*$ and $\ind_0^n$ is the $n^{th}$ derivative (Definition \ref{def:nthderiv}).
\[
\xymatrix@C+0.8cm@R+1cm{
\devcat
\ar@<-5pt>[r]_{nS^*}
\ar@<-5pt>[d]_-{(-)/\Sigma_n  \circ \mapdiag^*}
\ar @{} [drr] |{3}
&
\ar@<-5pt>[l]_{L_{nS}}
\interorthdevcat
\ar@<+5pt>[r]^{O(n)_+ \smashprod_{\Sigma_n} (-)}
&
\ar@<+5pt>[l]^-{i^*}
\orthdevcat
\ar@<-5pt>[d]_-{(-)/O(n) \circ \res_0^n }
\\
\ar@<-5pt>[u]_-{\diff_n}
\wcal \Top_{\nhomog}
\ar@<-5pt>[rr]_{S^*}
&&
\ar@<-5pt>[ll]_{L_S}
\ar@<-5pt>[u]_{\ind_0^n}
\jcal_0 \Top_{\nhomog}
}
\]
We claim that the following composite functors agree up to natural isomorphism.
\[
(-)/O(n) \circ \res_0^n \circ O(n)_+ \smashprod_{\Sigma_n} (-) \circ nS^*
\cong
S^* \circ (-)/\Sigma_n \circ \mapdiag^*
\]

% \todo[inline, color=cyan!20, author=R]{(21.04.2015) I am tempted to make the next two lines into a remark, since it's an important point}
\begin{remark}
Unfortunately, these composites consist of both left and right Quillen functors.
So it will not follow automatically that we have a commuting square on the
level of homotopy categories. Thus we will delay homotopical considerations
until after we have proven the claimed commutativity.
\end{remark}

The key fact comes from the commuting squares of diagram categories of
Section \ref{subsec:square1and2}:
\[
\xymatrix@C+1cm{
\jcal_0 \ar[r]^{i_n} \ar[d]^{S}
\ar@/^20pt/[rr]^{\R^n \ox-} &
\jcal_n
\ar[r]^{\alpha_n} \ar[d]^{nS} &
\jcal_1
\ar[d]^{S}\\
\wcal
\ar[r]^{\mapdiag}\ar@/_20pt/[rr]_{(-)^n} &
\wcal_n
\ar[r]^{\mu_n} &
\wcal \\
}
\]
The lefthand square of this diagram tells us that
$\res_0^n \circ  nS^* =  S^* \circ \mapdiag^*$
We also know that for any $\Sigma_n$-space $X$,
\[
(O(n)_+ \smashprod_{\Sigma_n} X)/O(n) \cong X/\Sigma_n
\]
and that this isomorphism is natural in $X$.
Hence, for any $E$ in $\devcat$ there are natural isomorphisms
\[
\begin{array}{rcl}
\left( \res_0^n (O(n)_+ \smashprod_{\Sigma_n} (nS^* E) \right)/O(n)
& = &
\left( (O(n)_+ \smashprod_{\Sigma_n} (E \circ nS \circ i_n) \right)/O(n) \\
& \cong &
\left (E \circ nS \circ i_n \right) /\Sigma_n \\
& \cong &
\left (E \circ \mapdiag \circ S \right) /\Sigma_n \\
& = &
S^* \left ((\mapdiag^* E) /\Sigma_n \right)
\end{array}
\]

Thus, square 3 commutes up to natural isomorphism.

As already noted, $S^*$ and $nS^*$ are right Quillen functors
while $(-)/\Sigma_n$ and $(-)/O(n)$ are left Quillen functors.
We must then look a little deeper to find a homotopically meaningful
description of how these functors (or rather, their derived counterparts) commute.

\begin{proposition}\label{prop:bighocommute}
The following diagram of homotopy categories 
commutes up to natural isomorphism. 
We use $L$ and $R$ to indicate where we have taken left or right derived functors. 
\[
\xymatrix@C+0.8cm@R+1cm{
\ho(\Sigma_n\circlearrowleft \wcal \Sp)
\ar[r]^{R S^*}
\ar[d]_-{R \mu_n^\ast}
&
\ho(\Sigma_n\circlearrowleft \jcal_1 \Sp)
\ar[r]^{O(n)_+ \smashprod_{\Sigma_n}^L (-)}
&
\ho(O(n) \circlearrowleft \jcal_1 \Sp)
\ar[d]^-{R \alpha_n^\ast}
\\
\ho(\devcat)
\ar[d]_-{L (-)/\Sigma_n \circ \mapdiag^*}
&
&
\ho(\orthdevcat)
\ar[d]^-{L (-)/O(n) \circ \res_0^n}
\\
\ho(\wcal \Top_{\nhomog})
\ar[rr]_{R S^*}
&&
\ho(\jcal_0 \Top_{\nhomog})
}
\]
\end{proposition}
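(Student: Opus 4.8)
The plan is to bootstrap from the point-set commutativities of Squares 1, 2 and 3 established in Subsections~\ref{subsec:square1and2} and \ref{subsec:square3}, exploiting the fact that almost every adjunction in Figure~\ref{fig:big} is a Quillen equivalence. Indeed, $(L_S,S^*)$ on spectra (Lemma~\ref{lem:squillen}), $(L_{nS},nS^*)$ (the corollary in Subsection~\ref{subsec:square1and2}), $(\wcal_n\smashprod_{\wcal}-,\mu_n^*)$ and $(i^*\jcal_1\smashprod_{i^*\jcal_n}-,\alpha_n^*)$ (Proposition~\ref{prop:alphan}), and the $O(n)$-counterpart $(\jcal_1\smashprod_{\jcal_n}-,\alpha_n^*)$ of the last (by the argument of \cite[Section 8]{barnesoman13}), are all Quillen equivalences, so that $RS^*$, $R(nS^*)$, $R\mu_n^*$ and $R\alpha_n^*$ are equivalences of homotopy categories. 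Since a composite of right Quillen functors has right derived functor the composite of the right derived functors, the rectangle formed by Squares 1 and 2 commutes after deriving; and since moreover $R\mu_n^*$ is an equivalence, a first diagram chase reduces the proposition to the single statement that Square 3 commutes after deriving, namely
\[
L\bigl((-)/O(n)\circ\res_0^n\bigr)\circ\bigl(O(n)_+\smashprod_{\Sigma_n}^L(-)\bigr)\circ R(nS^*)
\;\cong\;
RS^*\circ L\bigl((-)/\Sigma_n\circ\mapdiag^*\bigr)
\]
as functors $\ho(\devcat)\to\ho(\jcal_0\Top_{\nhomog})$.

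This is where the obstruction flagged in the remark before Subsection~\ref{subsec:square3} is genuine: both composites mix left and right derived functors, so the objectwise isomorphism of Subsection~\ref{subsec:square3} does not descend by formalism alone. I would resolve it by exhibiting a class of objects of $\devcat$ adapted to both composites simultaneously and then evaluating. A cofibrant-fibrant object $E$ of $\devcat$ (for the $n$-stable model structure) is $\Sigma_n$-free; on such an $E$ the left Quillen functor $(-)/\Sigma_n\circ\mapdiag^*$ computes its own left derived functor, and by the classification underlying the $n$-homogeneous model structure its value is objectwise weakly equivalent to an honest $n$-homogeneous functor, on which $S^*$ -- being precomposition with $S$ -- preserves all objectwise, hence all $n$-homogeneous, weak equivalences, so no fibrant replacement is needed before applying $RS^*$. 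Dually, $nS^*E$ is fibrant in $\interorthdevcat$ and free enough that $O(n)_+\smashprod_{\Sigma_n}(-)$ already computes its left derived functor and sends it to a cofibrant object of $\orthdevcat$, on which $(-)/O(n)\circ\res_0^n$ needs no further cofibrant replacement. Granting this, each derived composite evaluated on such an $E$ returns exactly one of the two objectwise expressions compared in Subsection~\ref{subsec:square3}, and the isomorphism proved there yields the displayed identity; reassembling with the first diagram chase proves the proposition.

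The step I expect to be the main obstacle is precisely this bookkeeping of replacements -- constructing the adapted class and checking that the (co)fibrant replacement demanded by one functor in a composite is not disturbed by the next. Its core is the assertion that a cofibrant-fibrant object of a derivative category is carried by the relevant diagonal/restriction-and-orbit functor to a functor weakly equivalent to an honest $n$-homogeneous functor (and not merely to one after an extra fibrant replacement), since it is only on honest homogeneous functors that $S^*$ and $nS^*$ manifestly preserve arbitrary objectwise weak equivalences; establishing this draws on the explicit descriptions of the generating cofibrations of $\devcat$ and $\orthdevcat$ and of the functors $\diff_n$ and $\ind_0^n$ from \cite{barneseldred} and \cite{barnesoman13}. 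With that in hand, everything else is the routine diagram chase outlined above.
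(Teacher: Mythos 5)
Your proposal runs the paper's logic backwards, and the step you defer is the whole difficulty. The paper does not obtain this proposition by pasting a derived version of Square 3 onto Squares 1 and 2; it proves the outer rectangle directly, and only afterwards (in the theorem that follows) deduces the derived commutativity of Square 3 from it together with Squares 1 and 2. The direct proof has three inputs: by \cite[Theorem 6.6]{barneseldred} the left-hand vertical composite $L((-)/\Sigma_n\circ\mapdiag^*)\circ R\mu_n^*$ sends a $\Sigma_n$-spectrum $\Theta$ to the Goodwillie classification formula $X\mapsto\Omega^\infty\bigl((\Theta\smashprod X^{\smashprod n})/h\Sigma_n\bigr)$; by \cite[Theorem 10.1]{barnesoman13} the right-hand composite sends an $O(n)$-spectrum $\Psi$ to the Weiss formula $V\mapsto\Omega^\infty\bigl((\Psi\smashprod S^{\rr^n\otimes V})/hO(n)\bigr)$; and Proposition \ref{prop:restrictionhomog} is precisely the computation that $S^*$ of the first formula is objectwise equivalent to the second formula evaluated at $O(n)_+\smashprod_{\Sigma_n}^L\Theta$. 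Square 3 plays no role in this proof.

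The gap in your route is that the derived commutativity of Square 3 is exactly what the remark preceding Subsection \ref{subsec:square3} warns does not follow formally, and your plan for it is an expectation rather than an argument. Two of its sub-claims are not routine. First, the assertion that $(\mapdiag^*E)/\Sigma_n$ for cofibrant-fibrant $E$ is \emph{objectwise} equivalent to an honest $n$-homogeneous functor (so that the fibrant replacement demanded by $RS^*$ in the $n$-homogeneous model structure, whose weak equivalences are only $D_n$-equivalences, costs nothing) is the content of \cite[Theorem 6.6]{barneseldred} itself; you cannot avoid that input. Second, on the right-hand side $nS^*E$ is fibrant but not cofibrant in $\interorthdevcat$, so you must show that $O(n)_+\smashprod_{\Sigma_n}(-)$ and then $(-)/O(n)\circ\res_0^n$ nevertheless compute their left derived values on it; $\Sigma_n$-freeness of the values alone does not give this --- one has to check that a cofibrant replacement in $\orthdevcat$ is carried by strict $O(n)$-orbits to a $D_n^W$-equivalence, which is in substance \cite[Theorem 10.1]{barnesoman13}. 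Once both classification theorems are invoked, the two vertical composites are identified with the explicit formulas above, and the comparison between them is exactly Proposition \ref{prop:restrictionhomog}; the point-set isomorphism of Subsection \ref{subsec:square3} then adds nothing. So either carry out the bookkeeping in full (at which point you will have essentially reproved the two cited theorems) or, as the paper does, quote them and reduce the proposition to Proposition \ref{prop:restrictionhomog}.
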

\begin{proof}
By \cite[Theorem 6.6]{barneseldred} the composite of 
$R \mu_n^*$ and $L (-)/\Sigma_n \circ \mapdiag^*$
applied to a $\Sigma_n$-spectrum $\Theta$
is weakly equivalent to the formula 
\[
X \mapsto \Omega^\infty \big( (\Theta \smashprod A^{\smashprod n})/h \Sigma_n \big)
\]
of Theorem \ref{thm:classifications}.
Similarly, by \cite[Theorem 10.1]{barnesoman13} 
the composite of 
$R \alpha_n^*$ and $L (-)/O(n) \circ \res_0^n$
applied ot an $O(n)$-spectrum $\Psi$
is weakly equivalent to the formula 
\[
V \mapsto \Omega^\infty \big( (\Psi_E \smashprod S^{\rr^n \otimes V} )/h O(n) \big)
\]
of Theorem \ref{thm:classifications}.
The result then follows by 
Proposition \ref{prop:restrictionhomog}.
\end{proof}

\begin{theorem}
The following square commutes up to natural isomorphism. 
\[
\xymatrix@C+0.6cm@R+1cm{
\ho(\devcat)
\ar[r]^{R nS^*}
\ar[d]_-{L (-)/\Sigma_n  \circ \mapdiag^*}
&
\ho(\interorthdevcat)
\ar[r]^{O(n)_+ \smashprod_{\Sigma_n}^L (-)}
&
\ho(\orthdevcat)
\ar[d]^-{L (-)/O(n) \circ \res_0^n }
\\
\ho(\wcal \Top_{\nhomog})
\ar[rr]_{R S^*}
&&
\ho(\jcal_0 \Top_{\nhomog})
}
\]
\end{theorem}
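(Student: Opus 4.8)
The plan is \emph{not} to read off the statement from the point-set identity proved in Subsection~\ref{subsec:square3}. That identity compares, on each side, a composite of a left Quillen functor with a right Quillen functor (the orbit/diagonal functors $(-)/\Sigma_n\circ\mapdiag^*$ and $(-)/O(n)\circ\res_0^n$ and the change-of-groups functor $O(n)_+\smashprod_{\Sigma_n}(-)$ are left Quillen, while $nS^*$ and $S^*$ are right Quillen), and total deriving does not in general commute with such mixed composites. Instead the plan is to deduce the square from Proposition~\ref{prop:bighocommute} together with the \emph{derived} forms of the commutativity of Squares~1 and~2, using that the vertical functors $\mu_n^*$ and $\alpha_n^*$ are parts of Quillen equivalences (Proposition~\ref{prop:alphan}).

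First I would note that $R\mu_n^*\colon\ho(\Sigma_n\lca\wcal\Sp)\to\ho(\devcat)$ is an equivalence of categories, so it suffices to prove that the two composites $\ho(\devcat)\to\ho(\jcal_0\Top_{\nhomog})$ coming from the two ways around the square agree after precomposing with $R\mu_n^*$. The ``down-then-across'' composite, so precomposed, is exactly the left-column-then-bottom-row composite of the diagram of Proposition~\ref{prop:bighocommute}, hence by that Proposition it is naturally isomorphic to the top-row-then-right-column composite $L\big((-)/O(n)\circ\res_0^n\big)\circ R\alpha_n^*\circ\big(O(n)_+\smashprod_{\Sigma_n}^L(-)\big)\circ RS^*$. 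For the ``across-then-down'' composite I would use that Square~1 is a square of right Quillen functors, so right-deriving the identity $nS^*\circ\mu_n^*\cong\alpha_n^*\circ S^*$ gives $RnS^*\circ R\mu_n^*\cong R\alpha_n^*\circ RS^*$; substituting this, the precomposed ``across-then-down'' composite becomes $L\big((-)/O(n)\circ\res_0^n\big)\circ\big(O(n)_+\smashprod_{\Sigma_n}^L(-)\big)\circ R\alpha_n^*\circ RS^*$. Commuting $O(n)_+\smashprod_{\Sigma_n}^L(-)$ past $R\alpha_n^*$ by the derived form of Square~2 then turns this into the same expression as before, and since $R\mu_n^*$ is an equivalence the two original composites agree.

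The hard part will be the last step just invoked: Square~2 pairs the left Quillen functor $O(n)_+\smashprod_{\Sigma_n}(-)$ with the right Quillen functor $\alpha_n^*$, so its point-set commutativity is not automatically inherited by the derived functors. I expect the resolution to be that both functors are in fact homotopical and hence coincide with their own derived functors: $O(n)_+\smashprod_{\Sigma_n}(-)$ preserves all weak equivalences because $O(n)$ is a free $\Sigma_n$-CW complex, and $\alpha_n^*$ (precomposition with $\alpha_n$ together with the change of group action) sends $\pi_*$-isomorphisms to $n\pi_*$-isomorphisms since the doubling of dimension built into $\alpha_n$ is matched by the $n$-fold looping built into the $n$-stable model structure. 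Once that is established, the point-set identity of Square~2 passes verbatim to the homotopy categories and the diagram chase above goes through; I would expect verifying this homotopical behavior (rather than the formal diagram manipulation) to be where the real content lies.
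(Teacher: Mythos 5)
Your proposal is correct and follows essentially the same route as the paper: the paper's proof also reduces to Proposition \ref{prop:bighocommute} by identifying an object of $\ho(\devcat)$ with a $\Sigma_n$-spectrum via the Quillen equivalence (i.e.\ precomposing with $R\mu_n^*$) and then invoking the derived commutativity of Squares 1 and 2. Your extra care about why the mixed-variance Square 2 survives passage to derived functors (both $O(n)_+\smashprod_{\Sigma_n}(-)$ and $\alpha_n^*$ being homotopical) addresses a point the paper leaves implicit, and your justification of it is sound.
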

\begin{proof}
Take some $E \in \devcat$. Since we are interested in the homotopy category, 
this is the same as choosing some spectrum $\Theta$ with a $\Sigma_n$-action.
By Proposition \ref{prop:bighocommute} the two images of 
$\Theta$ in $\ho(\jcal_0 \Top_{\nhomog})$ agree. 
This, combined with the commutativity of Squares 1 and 2 (on the level of homotopy
categories) completes the proof. 
\end{proof}

%\input{dircomp}
 %%%%%%%%%%% %%%%%%%%%%% %%%%%%%%%%%
%\section{Notes on a direct comparison}

% \todo[inline, color=green]{The following has been summarised in Remark \ref{rmk:directproof}, and can now be removed}
% \todo[inline, backgroundcolor=blue!20!white]{removed "Notes on a direct comparison"}

\addcontentsline{toc}{part}{{\small Bibliography}}

\bigskip

\begin{tabular}{ll}
\begin{tabular}{l}
David Barnes \\
Pure Mathematics Research Centre, \\
Queen's University, \\
Belfast BT7 1NN, UK\\
\url{d.barnes@qub.ac.uk}\\
\end{tabular}
&
\begin{tabular}{l}
Rosona Eldred \\
Mathematisches Institut, \\
Universit\"{a}t M\"{u}nster\\
Einsteinstr. 62, 48149 M\"{unster}, Germany\\
\url{eldred@uni-muenster.de}
\end{tabular}
\end{tabular}


\begin{thebibliography}{MMSS01}

\bibitem[ADL08]{ADL}
G.~Arone, W.~Dwyer, and K.~Lesh.
\newblock {Loop structures in Taylor towers}.
\newblock {\em Algebraic and Geometric Topology}, 8:173--210, 2008.

\bibitem[Aro02]{aroneweiss}
G.~Arone.
\newblock The weiss derivatives of {BO (-)} and {BU (-)}.
\newblock {\em Topology}, 41(3):451--481, 2002.

\bibitem[BCR07]{BCR07}
G.~Biedermann, B.~Chorny, and O.~R{\"o}ndigs.
\newblock Calculus of functors and model categories.
\newblock {\em Adv. Math.}, 214(1):92--115, 2007.

\bibitem[BE14]{barneseldred}
D.~Barnes and R.~Eldred.
\newblock Capturing {G}oodwillie's derivative.
\newblock {\tt arXiv: 1406.0424}, 2014.

\bibitem[BO13]{barnesoman13}
D.~Barnes and P.~Oman.
\newblock Model categories for orthogonal calculus.
\newblock {\em Algebr. Geom. Topol.}, 13(2):959--999, 2013.

\bibitem[BR13]{BRgoodwillie}
G.~Biedermann and O.~R{\"o}ndigs.
\newblock Calculus of functors and model categories {II}.
\newblock {\tt arXiv: 1305:2834}, 2013.

\bibitem[Goo90]{gw90}
T.~Goodwillie.
\newblock Calculus {I}: The first derivative of pseudoisotopy theory.
\newblock {\em K-Theory}, 4(1):1--27, 1990.

\bibitem[Goo91]{gw91}
T.~Goodwillie.
\newblock Calculus {II}: {A}nalytic functors.
\newblock {\em K-Theory}, 5(4):295-- 332, 1991.

\bibitem[Goo03]{goodcalc3}
T.~Goodwillie.
\newblock Calculus {III}: {T}aylor {S}eries.
\newblock {\em Geom. Topol.}, 7:645--711, 2003.

\bibitem[Hir03]{hir03}
P.~Hirschhorn.
\newblock {\em Model categories and their localizations}, volume~99 of {\em
  Mathematical Surveys and Monographs}.
\newblock American Mathematical Society, Providence, RI, 2003.

\bibitem[Hus94]{huse94}
D.~Husemoller.
\newblock {\em Fibre bundles}, volume~20 of {\em Graduate Texts in
  Mathematics}.
\newblock Springer-Verlag, New York, third edition, 1994.

\bibitem[MM02]{mm02}
M.~Mandell and J.~May.
\newblock Equivariant orthogonal spectra and {$S$}-modules.
\newblock {\em Mem. Amer. Math. Soc.}, 159(755):x+108, 2002.

\bibitem[MMSS01]{mmss01}
M.~Mandell, J.~May, S.~Schwede, and B.~Shipley.
\newblock Model categories of diagram spectra.
\newblock {\em Proc. London Math. Soc. (3)}, 82(2):441--512, 2001.

\bibitem[RW15]{weissreis}
R.~Reis and M.~Weiss.
\newblock Functor calculus and the discriminant method (smooth maps to the
  plane and pontryagin classes, {P}art {III}).
\newblock arXiv:1102.0233v5, 2011 (v3 2015).

\bibitem[Wei95]{weiss95}
M.~Weiss.
\newblock Orthogonal calculus.
\newblock {\em Trans. Amer. Math. Soc.}, 347(10):3743--3796, 1995.

\bibitem[Wei98]{weisserrata}
M.~Weiss.
\newblock Erratum: ``{O}rthogonal calculus'' [{T}rans.\ {A}mer.\ {M}ath.\
  {S}oc.\ {347}(10):3743--3796, 1995 ].
\newblock {\em Trans. Amer. Math. Soc.}, 350(2):851--855, 1998.

\end{thebibliography}
\end{document}